\let\oldcite\cite                                  
\newcommand{\separate}{{\center \rule{9cm}{0cm} \\ \rule{9cm}{0.5pt} \\
\rule{9cm}{0cm}}}
\newtheorem{thm}{Theorem}[section]
\newtheorem{cor}[thm]{Corollary}
\newtheorem{prop}[thm]{Proposition}
\theoremstyle{definition}
\theoremstyle{remark}
\newtheorem{rem}[thm]{Remark}
\numberwithin{equation}{section} \theoremstyle{remark}
\newtheorem{ex}[thm]{Example}
\newcommand{\U}{\mathcal{U}}
\newcommand{\D}{\mathcal{D}}
\newcommand{\OO}{\mathcal{O}}
\newcommand{\F}{\mathcal{F}}
\newcommand{\G}{\mathcal{G}}
\newcommand{\T}{\mathcal{T}}
\newcommand{\C}{\mathsf{C}}
\newcommand{\B}{\mathsf{B}}
\newcommand{\A}{\mathsf{A}}
\newcommand{\bbZ}{\mathbb{Z}}
\newcommand{\bbF}{\mathbb{F}}
\newcommand{\bbT}{\mathbb{T}}
\newcommand{\bbD}{\mathbb{D}}
\newcommand{\al}{\alpha}
\newcommand{\be}{\beta}
\newcommand{\lra}{\longrightarrow}
\newcommand{\llra}[1]{\stackrel{#1}{\lra}}
\newcommand{\im}{\operatorname{im}}
\newcommand{\coker}{\operatorname{coker}}
\newcommand{\End}{\operatorname{End}}
\newcommand{\Hom}{\operatorname{Hom}}
\newcommand{\PreSh}{\mathbf{PreSh}}
\newcommand{\Sh}{\mathbf{Sh}}
\newcommand{\Spec}{\operatorname{Spec}}
\newcommand{\Ch}{\operatorname{Ch}}
\newcommand{\Tor}{\operatorname{Tor}}
\newcommand{\Ext}{\operatorname{Ext}}
\newcommand{\Cyl}{\operatorname{Cyl}}
\newcommand{\Cone}{\operatorname{Cone}}
\newcommand{\id}{\operatorname{id}}
\newcommand{\tul}[1]{\T^{\leq #1}}
\newcommand{\tug}[1]{\T^{\geq #1}}
\newcommand{\taudl}[1]{\tau_{\leq #1}}
\newcommand{\taudg}[1]{\tau_{\geq #1}}
\def\smashedlongrightarrow{\setbox0=\hbox{$\longrightarrow$}\ht0=1pt\box0}
\def\risom{\buildrel\sim\over{\smashedlongrightarrow}}
\newcommand{\torel}[1]{\stackrel{#1}{\to}}
\newcommand{\thar}[7]{\ar    #4  @{#1} @<0.5pt> [#5]   #6       
                      \ar    #4  @{#2}          [#5]   #6   #7     
                      \ar    #4  @{#3} @<-0.5pt> [#5]   #6        
                                           }
\newcommand{\dentar}[3]{\thar{--}{>}{--}{#1}{#2}{}{#3}}
\newcommand{\thickar}[3]{\thar{-}{->}{-}{#1}{#2}{}{#3}}
\begin{document}

\title[Lectures on derived and triangulated categories]
{Lectures on derived and triangulated categories}%
\author{Behrang Noohi}
\email{behrang@alum.mit.edu}       
\address{Mathematics Department\\ 
         Florida State University\\
         208 Love Building\\
         Tallahassee,  FL 32306-4510 \\
         U.S.A.}

\maketitle

These are the notes of three lectures  given in the International
Workshop on Noncommutative Geometry held in I.P.M., Tehran, Iran,
September 11-22.

The first lecture is an introduction to the basic notions of {\em
abelian category theory}, with a view toward their algebraic
geometric incarnations (as categories of modules over rings or
sheaves of modules over schemes).

In the second lecture, we motivate the importance of {\em chain
complexes} and work out some of their basic properties. The emphasis
here is on the notion of  {\em cone} of a chain map, which will
consequently lead to the notion of  an exact triangle of chain
complexes, a generalization of the cohomology long exact sequence.
We then discuss the homotopy category and the {\em derived category}
of an abelian category, and highlight their main properties.

As a way of formalizing the properties of the cone construction, we
arrive at the notion of  a {\em triangulated category}. This is the
topic of the third lecture. After presenting the main examples of
triangulated categories (i.e., various homotopy/derived categories
associated to an abelian category), we discuss the problem of
constructing abelian categories from a given triangulated category
using $t$-structures.

\vspace{0.1in} \noindent {\em A word on style.} In writing these
notes, we have tried to follow a {\em lecture style} rather than an
{\em article style}. This means that, we have tried to be very
concise, keeping the explanations to a minimum, but not less
(hopefully). The reader may find here and there certain remarks
written in small fonts; these are meant to be side notes that can be
skipped without affecting the flow of the material. With a few
insignificant exceptions, the topics are arranged in linear order.

\vspace{0.1in}
\noindent{\em A word on references.} The references given in these
lecture notes are mostly suggestions for further reading and are not
meant for giving credit or to suggest originality.

\vspace{0.1in}
\noindent{\bf Acknowledgement.} I would like to thank Masoud
Khalkhali, Matilde Marcolli, and Mehrdad Shahshahani for organizing
the excellent workshop, and for inviting me to participate and
lecture in it. I am indebted to Masoud for his continual help and
support. At various stages of preparing these lectures, I have
benefited from discussions with Snigdhayan Mahanta .  I  thank
Elisenda Feliu for reading the notes and making useful comments and,
especially for creating the beautiful diagrams that I have used in
Lecture 2!

\newpage

\tableofcontents%

\newpage
{\LARGE\part*{Lecture 1: Abelian categories}}

\vspace{0.3in}

\noindent{\bf Overview.} In this lecture, we introduce {\em
additive} and {\em abelian} categories, and discuss their most basic
properties. We then concentrate on the examples of abelian
categories that we are interested in. The most fundamental example
is the category of {\em modules} over a ring. The next main class of
examples consists of various categories of {\em sheaves of modules}
over a space; a special type of these examples, and a very important
one, is the category of {\em quasi-coherent sheaves} on a scheme.
The idea here is that one can often recover a space from an
appropriate category of sheaves on it. For example, we can recover a
scheme from the category of quasi-coherent sheaves on it. This point
of view allows us to think of an abelian category as ``a certain
category of sheaves on a certain hypothetical space''. One might
also attempt to extract the ``ring of functions'' of this
hypothetical space. Under some conditions this is possible, but the
ring obtained is not unique. This leads to the notion of {\em Morita
equivalence} of rings. The slogan is that the ``ring of functions''
on a noncommutative space is  well defined only up to Morita
equivalence.

\vspace{0.3in}

\section{Products and coproducts in categories}{\label{S:Products}}

\noindent {\small References: \cite{HiSt}, \cite{Weibel},
 \cite{Freyd}, \cite{MacLane},  \cite{GeMa}.}

\vspace{0.1in}

\noindent Let $\C$ be a category and $\{X_i\}_{i\in I}$ a set of
objects in $\C$.

\vspace{0.2in}

\noindent The {\bf product} $\prod_{i \in I} X_i$ is an object in
$\C$, together with a collection of morphisms $\pi_i \: \prod_{i \in
I} X_i \to X_i$, satisfying the following universal property:

\vspace{0.1in}

 \fbox{
Given any collection of morphisms  $f_i \: Y \to X_i$, \ \ \
     $\xymatrix@=16pt@M=8pt@C=28pt{ &  \prod_{i \in I} X_i
      \ar[d]^{\pi_i}   \\
                  Y \ar[r]_{f_i} \ar@{..>}^(0.4){\exists !\, f}[ru] &
                   X_i }$
     }

\vspace{0.3in}

\noindent The {\bf coproduct} $\coprod_{i \in I} X_i$ is an object
in $\C$, together with a collection of morphisms $\iota_i \: X_i \to
\coprod_{i \in I} X_i$, satisfying the following universal property:

\vspace{0.1in}

 \fbox{
Given any collection of morphisms  $g_i \: X_i \to Y$, \ \ \
     $\xymatrix@=16pt@M=8pt@C=28pt{ &
            \coprod_{i \in I} X_i \ar@{..>}_(0.55){\exists !\, f}[dl] \\
                  Y   &   X_i \ar[l]^{g_i} \ar[u]_{\iota_i} }$
      }

\vspace{0.1in}

\begin{rem}
    Products and coproducts may or may not exist, but if they do they
    are unique up to  canonical isomorphism.
\end{rem}

\begin{ex}{\label{E:1}}
\end{ex}
\begin{itemize}
    \item[$\bf{1.}$] \ $\C=\mathbf{Sets}$:  \ \ \ \  $\coprod$= disjoint
    union, \ $\prod$=cartesian product.

    \vspace{0.1in}

    \item[$\bf{2.}$] \ $\C=\mathbf{Groups}$:  \ \ \ \  $\coprod$= free
    product, \
      $\prod$=cartesian product.

    \vspace{0.1in}

    \item[$\bf{3.}$]  \ $\C=\mathbf{UnitalCommRings}$:  \ \ \ \
    $\underset{\text{finite}}{\coprod}$=
    $\otimes_{\mathbb{Z}}$, \ $\prod$=cartesian product.


    \item[$\bf{4.}$]  \ $\C=\mathbf{Fields}$:  \ \ \ \  $\coprod$= does not exist,
      \ $\prod$=does not exist.

    \vspace{0.1in}

    \item[$\bf{5.}$] \ $\C=R$-$\mathbf{Mod}$=left $R$-modules, $R$ a ring:  \ \ \
      $\underset{\text{finite}}{\coprod}=\underset{\text{finite}}{\prod}=\oplus$,
      the \\  usual direct  sum  of modules.
\end{itemize}

\vspace{0.1in}

 \noindent  {\em Exercise.}  Show that in $R$-$\mathbf{Mod}$ there is a natural morphism
  $\coprod_{i \in I} X_i \to  \prod_{i \in I} X_i$, and give an
  example where this is not an isomorphism.

\section{Abelian categories}{\label{S:Additive}}

\noindent {\small References: \cite{HiSt}, \cite{Weibel},
\cite{Freyd}, \cite{MacLane},   \cite{GeMa}.}

\vspace{0.1in}

 We discuss three types of categories: {\em $Ab$-categories}, {\em
additive categories}, and {\em abelian categories}. Each type of
category has more structure/properties than the previous one.

\separate

\noindent An {\bf $Ab$-category} is a category $\C$ with the
following {\em extra structure}: each $\Hom_{\C}(X,Y)$ is endowed
with the structure of an abelian group. We require that composition
is linear:
  $$\xymatrix@=16pt@M=8pt@C=28pt{ X \ar[r]^u &  Y \ar@<1ex> [r]^f
     \ar@<-1ex> [r]_g & Z \ar[r]^v & T }$$
    $$u(f+g)=uf+ug, \ \ \ (f+g)v=fv+gv.$$

\vspace{0.1in}

\noindent An {\bf additive functor} between $Ab$-categories is  a
functor that induces group homomorphisms on $\Hom$-sets.

\vspace{0.2in}

\noindent An {\bf additive} category is a special type of
$Ab$-category. More precisely, an  additive category is an
$Ab$-category with the following {\em properties}:

\vspace{0.1in}
  \begin{itemize}
     \item[$\blacktriangleright$] There exists a {\em zero} object $0$
       such that
          $$\forall X, \ \ \Hom_{\C}(0,X)=\{0\}=\Hom_{\C}(X,0).$$

     \vspace{0.1in}

     \item[$\blacktriangleright$] Finite sums exist. (Equivalently,
     finite products exist; see Proposition \ref{P:sum}.)
  \end{itemize}

\separate

Before defining abelian categories, we discuss some basic facts
about $Ab$-categories.

\begin{prop}{\label{P:sum}} Let $\C$ be an $Ab$-category with a zero
 object, and let $\{X_i\}_{i\in I}$ be a finite set of objects in $\C$.
 Then $\coprod X_i$ exists if an only if $\prod X_i$ exists. In this case,
 $\coprod X_i$ and  $\prod X_i$ are naturally isomorphic.
\end{prop}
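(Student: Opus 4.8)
The plan is to prove the proposition by showing that, for a finite index set, one can explicitly build a coproduct out of a product and vice versa, using the abelian group structure on Hom-sets together with the zero object. By induction it suffices to treat the case of two objects $X_1, X_2$, since the empty case is handled by the zero object itself ($\coprod$ over the empty set is $0$, and so is $\prod$), and the inductive step $\coprod_{i=1}^{n} X_i = (\coprod_{i=1}^{n-1} X_i) \coprod X_n$ reduces everything to binary (co)products.

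First I would suppose that $P = X_1 \times X_2$ exists, with projections $\pi_1, \pi_2$. Using the universal property of the product, I define morphisms $\iota_1 \: X_1 \to P$ and $\iota_2 \: X_2 \to P$ by requiring $\pi_1 \iota_1 = \id_{X_1}$, $\pi_2 \iota_1 = 0$, and symmetrically $\pi_1 \iota_2 = 0$, $\pi_2 \iota_2 = \id_{X_2}$; here the zero morphisms are the ones factoring through the zero object. The key computation is then that $\iota_1 \pi_1 + \iota_2 \pi_2 = \id_P$: one checks this by composing with $\pi_1$ and $\pi_2$ and using linearity of composition together with the relations just imposed, then invoking the uniqueness clause in the universal property of $P$. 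Once this identity is in hand, given any pair $g_i \: X_i \to Y$ I set $f = g_1 \pi_1 + g_2 \pi_2 \: P \to Y$ and verify $f \iota_i = g_i$ using the relations above; uniqueness of such an $f$ follows because any $f'$ with $f' \iota_i = g_i$ satisfies $f' = f'(\iota_1 \pi_1 + \iota_2 \pi_2) = g_1 \pi_1 + g_2 \pi_2 = f$. This exhibits $P$ together with $\iota_1, \iota_2$ as a coproduct of $X_1$ and $X_2$.

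The converse direction is entirely dual: if $Q = X_1 \coprod X_2$ exists with inclusions $\iota_1, \iota_2$, one defines $\pi_i \: Q \to X_i$ by the analogous relations, proves $\iota_1 \pi_1 + \iota_2 \pi_2 = \id_Q$ by composing on the other side and using the uniqueness clause of the coproduct, and then checks that $Q$ with $\pi_1, \pi_2$ satisfies the universal property of the product via $f = \iota_1 f_1 + \iota_2 f_2$. Since in both cases the same object underlies the coproduct and the product, and the comparison map $\coprod X_i \to \prod X_i$ (the one from the Exercise, determined by $\pi_j \iota_i = \delta_{ij}$) is precisely the identity under this identification, the two are naturally isomorphic, which also gives naturality in the $X_i$.

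I expect the main obstacle — really the only subtle point — to be establishing the idempotent-completeness-style identity $\iota_1 \pi_1 + \iota_2 \pi_2 = \id$, since this is where the interaction between the additive structure on Hom-sets and the universal property genuinely gets used; everything else is a formal diagram chase. One should be slightly careful that all the ``$0$'' morphisms appearing are simultaneously the additive identities of the relevant Hom-groups and the morphisms factoring through the zero object — a small lemma (composition with a morphism through $0$ yields the zero element of the Hom-group) worth stating once. The passage from the binary case to arbitrary finite sets via induction is routine and I would only indicate it briefly.
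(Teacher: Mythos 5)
The paper offers no proof of this proposition (it is left as an exercise, with a pointer to Hilton--Stammbach), so there is nothing to compare against; your argument is the standard one and is correct in substance. The binary case is handled exactly right: defining $\iota_i$ by $\pi_j\iota_i=\delta_{ij}$, verifying $\iota_1\pi_1+\iota_2\pi_2=\id_P$ by composing with each $\pi_j$ and invoking the uniqueness clause of the product, and then reading off both existence and uniqueness of the mediating map $f=g_1\pi_1+g_2\pi_2$ from that identity is precisely how one shows a product is a biproduct. You are also right to isolate the small lemma that a morphism factoring through the zero object is the additive identity of its $\Hom$-group (this follows since $\Hom(A,0)$ is the trivial group and postcomposition is a group homomorphism); without it the computations $\pi_2\iota_1=0 \Rightarrow g_2\pi_2\iota_1=0$ would be unjustified.

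One wrinkle worth fixing: the reduction of the general finite case to the binary case by induction via $\coprod_{i=1}^{n}X_i=(\coprod_{i=1}^{n-1}X_i)\coprod X_n$ does not quite work for the \emph{existence} half of the statement. To apply the inductive hypothesis you would need to know that $\prod_{i=1}^{n-1}X_i$ exists given that $\prod_{i=1}^{n}X_i$ does, and in an $Ab$-category this is not automatic --- the candidate object is the image of the idempotent $\sum_{i=1}^{n-1}\iota_i\pi_i$ on $\prod_{i=1}^{n}X_i$, and idempotents need not split. The repair is trivial: drop the induction and run your binary argument verbatim on the $n$-fold product, i.e.\ define all $n$ morphisms $\iota_i$ by $\pi_j\iota_i=\delta_{ij}$, prove $\sum_{i=1}^{n}\iota_i\pi_i=\id$ (a finite sum, which is where finiteness of $I$ enters), and check the $n$-ary coproduct universal property with $f=\sum_i g_i\pi_i$ directly. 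With that adjustment the proof is complete, and the naturality claim follows as you say from the comparison map $\pi_j\iota_i=\delta_{ij}$ being the identity under the identification.
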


\begin{proof}
   Exercise. (Or See \cite{HiSt}.)
\end{proof}

\vspace{0.1in}

\noindent {\em Notation.} Thanks to Proposition \ref{P:sum}, it
makes sense to use the same symbol $\oplus$ for both product and
coproduct (of finitely many objects) in an additive category.

\vspace{0.2in}

\noindent {\em Exercise.}  In Example \ref{E:1}, which ones can be
made into additive categories?

\separate

\noindent A morphism  $f \: B \to C$ in an additive category is
called a {\bf monomorphism} if

\vspace{0.1in}

     \begin{center}  \fbox{$\forall \ X \llra{g} B \llra{f} C, \  \ \
                                         f\circ g=0 \ \ \ \Rightarrow
     \ \     g=0$.}\end{center}

\vspace{0.1in}

\noindent A {\bf kernel} of a morphism $f \: B \to C$ in an additive
category is a morphism $i \: A \to B$ such that  $f\circ i=0$, and
that for every $g \: X \to B$ with $f\circ g=0$

\vspace{0.1in}

 \begin{center}\fbox{
       $\xymatrix@=16pt@M=8pt@C=28pt{ A \ar[r]^i & B \ar[r]^f & C \\
                        & X \ar@{..>} [lu]^{\exists !} \ar[ru]_0 \ar[u]^(0.45){g} &
                        }$}
 \end{center}

\vspace{0.1in}

\begin{prop}
If $i$ is a kernel for some morphism, then $i$ is a
 monomorphism. The converse is not always true.
\end{prop}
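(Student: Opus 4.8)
The plan is to prove the two assertions separately. For the first, suppose $i\colon A\to B$ is a kernel of some morphism $f\colon B\to C$. To show $i$ is a monomorphism, I would take an arbitrary $g\colon X\to A$ with $i\circ g=0$ and deduce $g=0$. The key observation is that $f\circ i=0$ always holds for a kernel, so $f\circ(i\circ g)=0$ trivially, i.e.\ $i\circ g$ factors the zero map $X\to B\to C$ through $B$. But the universal property of the kernel $i$ says that \emph{any} map $X\to B$ composing to zero with $f$ factors \emph{uniquely} through $i$. Now $i\circ g$ is such a map into $B$, and it factors through $i$ via $g$; on the other hand the zero map $X\to A$ also factors $i\circ g$ (since $i\circ 0 = 0 = i\circ g$ — wait, this needs $i\circ g=0$, which is our hypothesis). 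So both $g$ and $0$ are factorizations of the map $i\circ g=0\colon X\to B$ through $i$, and uniqueness forces $g=0$. That is the whole argument for the forward direction; it is short and the only subtlety is setting up the uniqueness comparison correctly.

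For the converse — that a monomorphism need not be a kernel — I would exhibit a concrete counterexample in a familiar additive category. The cleanest choice is $\C = \mathbb{Z}\text{-}\mathbf{Mod}$, the category of abelian groups, with the multiplication-by-$2$ map is not the issue; rather, a better-known phenomenon is the category of \emph{filtered} abelian groups or, simplest of all, take the inclusion of a non-direct-summand. Actually the standard slick example uses a non-abelian additive category, or the category of finitely generated free abelian groups. I would present: in the additive category of finitely generated \emph{free} abelian groups, the map $\mathbb{Z}\xrightarrow{2}\mathbb{Z}$ is a monomorphism (it is injective, hence has trivial kernel among free groups) but it is not the kernel of any morphism out of $\mathbb{Z}$, since a kernel $i\colon A\to B$ here would have to be the honest set-theoretic kernel of some $f\colon B\to C$, and no such $f$ out of $\mathbb{Z}$ has image $2\mathbb{Z}$ as its kernel — indeed the kernel of any homomorphism $\mathbb{Z}\to C$ with $C$ free is either $0$ or all of $\mathbb{Z}$.

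The main obstacle is the counterexample, not the forward implication: one must pick a category that is genuinely additive (so the claim is not vacuous) yet lacks all kernels, and then verify carefully both that the chosen map is a categorical monomorphism and that it fails to satisfy the kernel universal property for \emph{every} target morphism. I expect to spend most of the write-up justifying the latter — checking that no $f$ makes $i$ into $\ker f$ — which amounts to a small case analysis on homomorphisms out of $\mathbb{Z}$. The forward direction, by contrast, is a two-line diagram chase using only the uniqueness clause in the definition of kernel.
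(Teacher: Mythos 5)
Your proof is correct, and the counterexample is essentially the one the paper itself has in mind: the forward direction is the standard two-line application of the uniqueness clause in the kernel's universal property, and your category of finitely generated free abelian groups is exactly the paper's Example 2.$\mathbf{2}$, where the map $\mathbb{Z}\xrightarrow{\times 3}\mathbb{Z}$ is flagged (in dual form) as an epimorphism that is not a cokernel. One small correction to your strategic remarks: you do not need, and in fact do not have, a category that ``lacks all kernels'' --- the paper notes explicitly that finitely generated free $\mathbb{Z}$-modules \emph{do} admit all kernels and cokernels (the set-theoretic kernel of a map of free groups is again finitely generated free, which is also what justifies your claim that a categorical kernel here must be the honest set-theoretic one). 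The point is only that some monomorphism fails to arise as a kernel, and a slicker way to see this for $\times 2$ is that it is both mono and epi but not an isomorphism: if a mono-epi $i$ were $\ker(f)$, then $f\circ i=0$ and $i$ epi would force $f=0$, making $i$ a kernel of the zero map and hence an isomorphism.
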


  We can also make the dual definitions.

\vspace{0.1in}

\noindent A morphism  $f \: B \to C$ in an additive category is
called an {\bf epimorphism} if

\vspace{0.1in}

     \begin{center}  \fbox{$\forall \ B \llra{f} C \llra{h} D,  \  \ \ h\circ f=0 \ \ \ \Rightarrow
          \ \     h=0$.}
     \end{center}

\vspace{0.1in}

\noindent A {\bf cokernel} of a morphism $f \: B \to C$ in an
additive category is a morphism $p \: C \to D$ such that  $p\circ
f=0$, and such that for every $h \: B \to Y$ with $h\circ f=0$

\vspace{0.1in}

 \begin{center}\fbox{
       $\xymatrix@=16pt@M=8pt@C=28pt{ B  \ar[rd]_0 \ar[r]^f &
            C \ar[d]_(0.4){h} \ar[r]^p & D \ar@{..>} [ld]^{\exists !} \\
                        & Y   &            }$}
 \end{center}

\vspace{0.1in}

\begin{prop} If $p$ is a cokernel for some morphism, then $p$ is an
 epimorphism. The converse is not always true.
\end{prop}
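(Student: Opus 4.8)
The plan is to prove the two assertions separately, starting with the positive statement that a cokernel is an epimorphism, and then exhibiting a counterexample (or at least explaining where one lives) for the failure of the converse.

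First I would show that if $p \: C \to D$ is a cokernel of some morphism $f \: B \to C$, then $p$ is an epimorphism. So suppose $h \: D \to E$ satisfies $h \circ p = 0$. I want to conclude $h = 0$. The key move is to feed the composite $h \circ p$ back into the universal property of the cokernel: since $(h \circ p)\circ f = h \circ (p \circ f) = h \circ 0 = 0$, the universal property of the cokernel of $f$ applied to the morphism $0 \: C \to E$ (which also satisfies $0 \circ f = 0$) produces a \emph{unique} morphism $D \to E$ factoring through $p$. But both $h$ and the zero morphism $D \to E$ are such factorizations (one by assumption, the other trivially). By the uniqueness clause in the universal property, $h = 0$. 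Hence $p$ is an epimorphism. This dualizes, line for line, the corresponding statement for kernels and monomorphisms, so I would remark that it is simply the dual assertion and could be omitted; but since the reader may want to see it once, I would write it out as above.

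For the converse, I need an epimorphism in some additive category that is not the cokernel of any morphism. The standard example is the inclusion $\mathbb{Z} \hookrightarrow \mathbb{Q}$ in the category of abelian groups (equivalently $\mathbb{Z}$-$\mathbf{Mod}$, which is additive by the earlier discussion): it is an epimorphism because any homomorphism $\mathbb{Q} \to A$ is determined by its restriction to $\mathbb{Z}$ (divisibility forces the values on $\tfrac{1}{n}$ once the value on $1$ is known), yet it is not surjective, and in $\mathbb{Z}$-$\mathbf{Mod}$ a cokernel $C \to D$ is always surjective onto $D$. So $\mathbb{Z} \hookrightarrow \mathbb{Q}$ cannot be a cokernel. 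I would state this example and leave the two verifications (epi, and cokernels are surjective) as routine, since the category-of-modules facts are exactly the kind of thing the lecture takes for granted.

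The main obstacle, such as it is, is purely expository rather than mathematical: deciding how much of the dual argument to reproduce and whether to fully justify the counterexample. The forward direction is a one-line application of the universal property's uniqueness clause, and the counterexample is classical; neither requires genuine work. If I wanted a counterexample internal to the additive (non-abelian) world I could instead point to the inclusion of a non-closed subspace in the category of Hausdorff topological abelian groups, but $\mathbb{Z} \hookrightarrow \mathbb{Q}$ is cleanest and fits the running examples best.
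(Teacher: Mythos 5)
Your argument for the positive half is correct: feeding the zero morphism $C\to E$ into the universal property of $\coker(f)$ and using the uniqueness of the factorization to conclude $h=0$ is exactly the right (and standard, dual-to-kernels) proof.

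The counterexample, however, does not work. The inclusion $\mathbb{Z}\hookrightarrow\mathbb{Q}$ is \emph{not} an epimorphism in $\mathbf{Ab}=\mathbb{Z}$-$\mathbf{Mod}$: the quotient map $\mathbb{Q}\to\mathbb{Q}/\mathbb{Z}$ is nonzero but restricts to zero on $\mathbb{Z}$. Your divisibility argument breaks down because division by $n$ is not unique in an arbitrary abelian group $A$ (it is only when $A$ is torsion-free), so a homomorphism out of $\mathbb{Q}$ is not determined by its value on $1$; what you are remembering is that $\mathbb{Z}\hookrightarrow\mathbb{Q}$ is an epimorphism in the category of unital commutative \emph{rings}. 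More fundamentally, no counterexample can live in $\mathbf{Ab}$ or in any $R$-$\mathbf{Mod}$: these are abelian categories, and one of the defining axioms of an abelian category is precisely that every epimorphism is a cokernel. The counterexample must be sought in an additive category that fails to be abelian. The example intended in these notes (it appears as an exercise just after the definition of abelian categories) is the map $\mathbb{Z}\llra{\times 3}\mathbb{Z}$ in the category of finitely generated free $\mathbb{Z}$-modules: it is an epimorphism there (if $3\,h(1)=0$ in a free module then $h(1)=0$), but it is also a monomorphism and not an isomorphism, whereas a cokernel $p$ of some $f$ that is simultaneously a monomorphism forces $f=0$ and hence $p$ to be an isomorphism. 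Your parenthetical fallback, the inclusion of a dense proper subgroup in Hausdorff topological abelian groups, is in fact a valid example for the same reason (mono and epi but not iso); it should have been the main example rather than the aside.
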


\begin{rem}
    Kernels and cokernels may or may not exist, but if they do they
    are unique up to a canonical isomorphism.
\end{rem}

\begin{ex}{\label{E:2}}
\end{ex}
\begin{itemize}
    \item[$\bf{1.}$] In $\A=R$-$\mathbf{Mod}$ kernels
       and cokernels always exist.

\vspace{0.1in}

    \item[$\bf{2.}$] Let $\A$ be the category of finitely
    generated free $\mathbb{Z}$-modules. Then kernels and cokernels
    always exist.
     ({\em Exercise.} What is the cokernel of $\mathbb{Z}\llra{\times 3} \mathbb{Z}$?)

\vspace{0.1in}

    \item[$\bf{3.}$] Let $\A$ be the category of
    $\mathbb{C}$-vector spaces of even dimension. Then kernels and
    cokernels do not always exist in $\A$. (Give an example.)
    The same thing is true if $\A$ is the category of infinite dimensional
    vector spaces.

\end{itemize}

\separate

\noindent An {\bf abelian} category is an additive category $\A$
  with the following properties:

\vspace{0.1in}

\begin{itemize}
    \item[$\blacktriangleright$] Kernels and cokernels always exist
      in $\A$.

\vspace{0.1in}

    \item[$\blacktriangleright$] Every monomorphism is a kernel and
      every epimorphism is a cokernel.

\end{itemize}

\vspace{0.1in}

\noindent{\bf Main example.} For every ring $R$, the additive
category $R$-$\mathbf{Mod}$ is abelian.

\begin{rem}
Note that an $Ab$-category is a category {\em with an extra
structure}. However, an additive category is just an $Ab$-category
which satisfies some property (but no additional structure). An
abelian category is an additive category which satisfies some more
properties.
\end{rem}


\noindent{\em Exercise.} In Example \ref{E:2} show that
($\mathbf{1}$) is abelian, but ($\mathbf{2}$) and ($\mathbf{3}$) are
not. In ($\mathbf{2}$) the map $\mathbb{Z}\llra{\times 3}
\mathbb{Z}$ is an epimorphism that is not the cokernel of any
morphism, because it is also a monomorphism!

\vspace{0.1in}

\begin{prop}
   Let $f \: B \to C$ be a morphism in an abelian category. Let
   $i \: \ker(f) \to B$ be its kernel and $p \: C \to \coker(f)$ its
   cokernel. Then there is a natural isomorphism $\coker(i)\risom
   \ker(p)$ fitting in the following commutative diagram:
        $$\xymatrix@=10pt@M=8pt@C=14pt{ \ker(f) \ar@{^(->} [r]^(0.55)i & B
        \ar@{->>} [d]         \ar[r]^f & C \ar@{->>} [r]^(0.35)p & \coker(f) \\
                  & \coker(i)  \ar[r]^{\sim} &   \ker(p) \ar@{^(->} [u]
                  &}$$
\end{prop}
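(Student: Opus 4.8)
The plan is to build the map $\coker(i) \to \ker(p)$ by a sequence of universal-property arguments, then show it is both a monomorphism and an epimorphism, and finally invoke the abelian-category axiom to conclude it is an isomorphism.

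First I would observe that since $f \circ i = 0$, the universal property of the cokernel of $i$ produces a unique morphism $\bar{f} \colon \coker(i) \to C$ with $\bar{f} \circ q = f$, where $q \colon B \to \coker(i)$ is the canonical epimorphism. Next, since $p \circ f = 0$ and $q$ is an epimorphism, one gets $p \circ \bar{f} \circ q = p \circ f = 0$, hence $p \circ \bar{f} = 0$; then the universal property of $\ker(p)$ yields a unique morphism $\varphi \colon \coker(i) \to \ker(p)$ with $j \circ \varphi = \bar{f}$, where $j \colon \ker(p) \to C$ is the canonical monomorphism. This $\varphi$ is the map in the diagram, and chasing the definitions shows the square commutes and that the composite $\ker(f) \to B \xrightarrow{q} \coker(i) \xrightarrow{\varphi} \ker(p) \xrightarrow{j} C$ reproduces $f$ appropriately.

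The heart of the argument is showing $\varphi$ is a monomorphism and an epimorphism. For injectivity, I would use that in an abelian category every monomorphism is a kernel: since $j$ is a kernel it is in particular a mono, and $j \circ \varphi$ factors through $\bar f$; one shows $\ker(\varphi) \hookrightarrow \coker(i)$ pulls back along $q$ to a subobject of $B$ killed by $f$, hence contained in $\ker(f) = \ker(q)$, forcing $\ker(\varphi) = 0$, and then — because $\coker(i)$ has $\varphi$ with zero kernel and every mono is a kernel — $\varphi$ is a monomorphism. Dually, for surjectivity I would use that every epimorphism is a cokernel: $\varphi$ is an epimorphism because $\coker(\varphi)$, pushed out along $p \colon C \to \coker(f)$ via $\bar f$, receives a zero map in a way that shows $\coker(\varphi)$ must vanish, using that $p$ is the cokernel of $f$ and $q$ is an epimorphism. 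Once $\varphi$ is shown to be both mono and epi, I would invoke the standard fact (a consequence of the abelian axioms) that in an abelian category a morphism that is simultaneously a monomorphism and an epimorphism is an isomorphism: writing $\varphi$ as a cokernel and noting a monomorphic cokernel of $g$ forces the map it is a cokernel of to be zero, so $\varphi$ is the cokernel of $0$, i.e. an isomorphism.

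The main obstacle I anticipate is the mono/epi verification: the diagram chases that are transparent in $R$-$\mathbf{Mod}$ must here be carried out purely formally via universal properties and the two abelian axioms ("every mono is a kernel", "every epi is a cokernel"), without elements. In particular, establishing that $\ker(\varphi)$ vanishes requires pulling a subobject of $\coker(i)$ back to $B$ — which uses that $q$ is an epimorphism and that pullbacks/kernels behave well — and the analogous cokernel computation for epimorphism of $\varphi$ is dual. Careful bookkeeping of which axiom is used where is essential; everything else is a routine application of the universal properties of kernels and cokernels established earlier in this section.
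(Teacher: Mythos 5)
Your strategy is correct and is the standard textbook argument (essentially Freyd's): construct $\varphi$ by the universal properties of $\coker(i)$ and $\ker(p)$, show $\ker\varphi=0$ and $\coker\varphi=0$ via a pullback along the epi $q$ and the dual pushout, and conclude with mono $+$ epi $\Rightarrow$ iso (which you correctly derive independently from the axiom that every epi is a cokernel, so there is no circularity with the corollary that follows the proposition). The paper states this proposition without proof, so there is nothing to compare against; the only points you lean on without proof are the auxiliary lemmas that pullbacks of epimorphisms are epimorphisms (dually for monos) and that a monomorphism equals the kernel of its cokernel (giving $\ker q=\ker f$), both of which are standard consequences of the abelian axioms and which you appropriately flag.
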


\begin{cor}
  Every morphism $f \: B \to C$ in an abelian category has a unique
  (up to a unique isomorphism) factorization
      $$\xymatrix@=14pt@M=6pt@C=10pt{B \ar[rr]^f \ar@{->>} [dr]_{f_{epi}} & & C \\
        & I \ar@{^(->} [ru]_{f_{mono}} &    }$$
\end{cor}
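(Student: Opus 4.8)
The plan is to read off the factorization directly from the Proposition just proved. Write $i\:\ker(f)\to B$ for the kernel of $f$ and $p\:C\to\coker(f)$ for its cokernel, and set $I:=\coker(i)\risom\ker(p)$, using the natural isomorphism of that Proposition to identify the two. Let $f_{epi}\:B\to I$ be the canonical projection $B\to\coker(i)$ and let $f_{mono}\:I\to C$ be the canonical inclusion $\ker(p)\to C$ (transported along the isomorphism). A cokernel is always an epimorphism and a kernel is always a monomorphism, by the two Propositions above, so $f_{epi}$ is epi and $f_{mono}$ is mono. That $f=f_{mono}\circ f_{epi}$ is precisely the commutativity of the pentagon in the statement of the preceding Proposition: $f$ kills $i$, hence factors through $\coker(i)$, and the resulting map into $C$ is annihilated by $p$, hence factors through $\ker(p)$, and chasing the diagram shows the composite is $f$.

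Next, uniqueness. Suppose $f=m\circ e$ is any factorization with $e\:B\to I'$ epi and $m\:I'\to C$ mono. First I would observe that $e$ has the same kernel as $f$: since $m$ is mono, for any $g$ one has $eg=0\iff meg=0\iff fg=0$, so $ei=0$ and every morphism killed by $e$ factors uniquely through $i$; thus $i$ is a kernel of $e$. The key input is then the lemma that in an abelian category every epimorphism is the cokernel of its own kernel: by the abelian axiom $e$ is the cokernel of some $t$; since $et=0$, $t$ factors through $i=\ker(e)$, so any morphism killing $i$ also kills $t$ and therefore factors uniquely through $e$, i.e. $e$ has the universal property of $\coker(i)$. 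Hence both $e$ and $f_{epi}$ are cokernels of $i$, so there is a unique isomorphism $\varphi\:I\risom I'$ with $\varphi\circ f_{epi}=e$. This $\varphi$ is automatically compatible with the mono parts, since $m\circ\varphi\circ f_{epi}=m\circ e=f=f_{mono}\circ f_{epi}$ and $f_{epi}$ is epi, forcing $m\circ\varphi=f_{mono}$; and any other $\varphi'$ respecting both triangles satisfies $\varphi'\circ f_{epi}=e=\varphi\circ f_{epi}$, so $\varphi'=\varphi$ again because $f_{epi}$ is epi.

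The main obstacle is exactly the lemma that an epimorphism in an abelian category is the cokernel of its kernel; this is where both abelian hypotheses (existence of kernels and cokernels, and ``every epi is a cokernel'') are genuinely used, and it is worth isolating as a preliminary claim. Everything else is a universal-property chase, an appeal to the Proposition identifying $\coker(i)$ with $\ker(p)$, or the epi-cancellation trick. I would also remark that the dual lemma — every monomorphism is the kernel of its cokernel — gives the symmetric description $I\risom\ker(\coker f)$ and would let one run the uniqueness argument on the mono side instead, with a mono-cancellation in place of the epi-cancellation.
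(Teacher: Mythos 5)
Your proof is correct and follows exactly the route the paper intends: the paper states this Corollary without proof, as an immediate consequence of the preceding Proposition identifying $\coker(i)\risom\ker(p)$, which is precisely your existence argument. Your uniqueness argument --- reducing to the lemma that an epimorphism in an abelian category is the cokernel of its own kernel, then comparing cokernels of $i$ and cancelling the epi --- is the standard way to fill in the part the paper leaves unwritten, and it is carried out correctly.
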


\begin{cor}
   In an abelian category, {\em mono + epi $\Leftrightarrow$ iso}.
\end{cor}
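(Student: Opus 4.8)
The statement to prove is the final corollary: in an abelian category, a morphism that is both monic and epic is an isomorphism. The plan is to feed such a morphism into the canonical mono-epi factorization supplied by the preceding corollary, and then argue that both factors are isomorphisms.

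First I would take a morphism $f\colon B\to C$ that is simultaneously a monomorphism and an epimorphism, and invoke the factorization $f = f_{mono}\circ f_{epi}$ with $B\twoheadrightarrow I \hookrightarrow C$. The key observation is that since $f$ is a monomorphism and $f = f_{mono}\circ f_{epi}$, a short diagram chase shows $f_{epi}$ is also a monomorphism: if $f_{epi}\circ g = 0$ then $f\circ g = f_{mono}\circ f_{epi}\circ g = 0$, hence $g=0$. Dually, since $f$ is an epimorphism, $f_{mono}$ is also an epimorphism. So it suffices to prove the two special cases: a morphism that is both a mono and an epi, and which in addition is the epi part (resp. mono part) of its own factorization, is an isomorphism. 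Cleaner still: it suffices to show that in an abelian category any morphism which is both monic and epic \emph{and} is an epimorphism-that-is-a-cokernel is an iso, and dually — but actually the slickest route is the following.

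The real content: a morphism $f\colon B\to C$ that is both monic and epic has $\ker(f)=0$ and $\coker(f)=0$. Indeed $i\colon \ker(f)\to B$ satisfies $f\circ i = 0$, so by the monomorphism property $i=0$; but $i$ is a kernel, hence a monomorphism (by the Proposition stated earlier), and a monomorphism that is zero forces its source to be the zero object, so $\ker(f)=0$. Dually $\coker(f)=0$. Now I apply the penultimate Proposition: there is a commutative diagram with $\ker(f)\hookrightarrow B \xrightarrow{f} C \twoheadrightarrow \coker(f)$, and $B\twoheadrightarrow \coker(i) \xrightarrow{\sim} \ker(p)\hookrightarrow C$. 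Since $\ker(f)=0$, the map $B\to \coker(i)$ is the cokernel of the zero map $0\to B$, which is $\id_B$; since $\coker(f)=0$, the map $\ker(p)\to C$ is the kernel of the zero map $C\to 0$, which is $\id_C$. Composing, $f = (\ker(p)\hookrightarrow C)\circ (B\xrightarrow{\sim}\coker(i)) = \id_C \circ (\text{iso}) \circ \id_B$ up to the canonical identifications, so $f$ is an isomorphism. The converse (iso $\Rightarrow$ mono and epi) is immediate and holds in any category, since an isomorphism is split mono and split epi.

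The main obstacle I anticipate is getting the identifications of $\coker(0\to B)$ with $B$ and $\ker(C\to 0)$ with $C$ precisely right, and checking that the triangle in the penultimate Proposition's diagram really does express $f$ as the claimed composite rather than something off by a sign or a different canonical map; this is bookkeeping with universal properties rather than a genuine difficulty. An alternative that sidesteps the diagram entirely: if $f$ is a monomorphism it is a kernel (abelian category axiom), say of some $g\colon C\to D$; since $g\circ f = 0$ and $f$ is an epimorphism, $g=0$; but then $f$ is the kernel of the zero morphism $C\to D$, and the kernel of $0\colon C\to D$ is $\id_C$ (it is the universal map into $C$ killed by $0$, i.e. every map into $C$), so $f$ is an isomorphism. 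I would likely present this second argument as the main proof since it is shortest, and remark that the first gives the same conclusion via the factorization corollary.
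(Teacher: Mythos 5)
Your proposal is correct. The paper offers no written proof---the statement is presented as an immediate corollary of the image factorization and of the proposition identifying $\coker(i)$ with $\ker(p)$---and your first argument (showing $\ker(f)=0$ and $\coker(f)=0$ from the mono/epi hypotheses, then reading off from that proposition that $f$ factors as a composite of the canonical isomorphisms $B\cong\coker(0\to B)\cong\ker(C\to 0)\cong C$) is exactly the derivation the paper intends. Your alternative argument is a genuinely different and shorter route: it bypasses images and cokernels entirely and uses only the abelian-category axiom that every monomorphism is a kernel, together with uniqueness of kernels, to conclude that $f$, being a kernel of a morphism $g$ that must vanish because $f$ is epic, is canonically isomorphic to $\id_C$. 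That second proof is the standard self-contained one and is a perfectly good choice for the main argument; the first has the pedagogical advantage of exercising the factorization machinery the corollary is attached to. Both are sound, including the small bookkeeping points you flag (a zero monomorphism forces its source to be the zero object, and $\coker(0\to B)=\id_B$, $\ker(C\to 0)=\id_C$ up to canonical isomorphism).
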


\vspace{0.1in}

\noindent The object $I$ (together with the two morphisms $f_{epi}$
and $f_{mono}$) in the above corollary is called the {\bf image} of
$f$ and is denoted by $\operatorname{im}(f)$. The morphism
$f_{mono}$ factors through every monomorphism into $C$ through which
$f$ factors. Dually,  $f_{epi}$ factors through every epimorphism
originating from $B$ through which $f$ factors. Either of these
properties characterizes the image.

\vspace{0.1in}%

\noindent In an abelian category, a sequence
  $$ 0 \llra{} A \llra{f} B \llra{g} C \llra{} 0$$
is called {\bf exact} if $f$ is a monomorphism, $g$ is an
epimorphism, and $\operatorname{im}(f)=\ker(g)$. An additive functor
between abelian categories is called exact if it takes exact
sequences to exact sequences.

\separate

We saw that $R$-modules form an abelian category for every ring $R$.
In fact, every small abelian category is contained in some
$R$-$\mathbf{Mod}$. (A category is called {\em small} if its objects
form a set.)

\begin{thm}[Freyd-Mitchell Embedding Theorem \oldcite{Freyd,Mitchel1}]
 Let $\A$ be a small abelian category. Then there exists a unital ring
 $R$ and an exact fully faithful functor $\A \to R$-$\mathbf{Mod}$.
\end{thm}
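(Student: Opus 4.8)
The plan is to realize $\A$ first as a full subcategory of a category of additive functors, and then to replace that functor category by a genuine module category. The classical route, due essentially to Mitchell, goes as follows.

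First I would form the category $\mathcal{L} = \mathbf{Fun}_{\mathrm{add}}(\A^{\mathrm{op}}, \mathbf{Ab})$ of additive functors from $\A^{\mathrm{op}}$ to abelian groups (the ``left-exact-completion''-type construction). The Yoneda embedding $h\: \A \to \mathcal{L}$, $X \mapsto \Hom_\A(-,X)$, is additive, fully faithful, and one checks it is left exact. The first substantive point is that $\mathcal{L}$ is an abelian category with enough injectives, in which filtered colimits are exact (it is a Grothendieck category), and that $h$ carries the subcategory of \emph{representable} functors onto an exact subcategory: a short sequence in $\A$ is exact if and only if its image under $h$ is exact in $\mathcal{L}$. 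The "only if" is left-exactness of Yoneda, and the "if" uses that an epimorphism $g$ in $\A$ becomes, after $h$, a map whose cokernel as a presheaf vanishes because every morphism out of its target lifts — here one uses the abelian-category axiom that epis are cokernels, exactly as flagged in the exercise after Example \ref{E:2}.

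Next I would pass from $\mathcal{L}$ to a module category. Since $\A$ is small, $\mathcal{L}$ is a Grothendieck category with a generator $U$ (take $U = \bigoplus_{X} h(X)$ over a set of representatives of isomorphism classes of objects of $\A$; this sum exists in $\mathcal{L}$ even though infinite sums need not exist in $\A$). Set $R = \End_{\mathcal{L}}(U)^{\mathrm{op}}$. The functor $F = \Hom_{\mathcal{L}}(U,-)\: \mathcal{L} \to R\text{-}\mathbf{Mod}$ is exact, because $U$ is projective in $\mathcal{L}$ (being a coproduct of representables, each of which is projective by the Yoneda lemma together with exactness of filtered colimits), and it is faithful because $U$ is a generator. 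Finally, $F$ is fully faithful on the subcategory of objects admitting a presentation by copies of $U$; since every $h(X)$ is a quotient — indeed a direct summand — of $U$, the composite $F \circ h \: \A \to R\text{-}\mathbf{Mod}$ is fully faithful. Composing with the exactness statements of the previous paragraph, $F\circ h$ sends exact sequences to exact sequences, which is what we want.

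The main obstacle, and the part deserving the most care, is establishing that $\mathcal{L}$ has the properties used above — that it is abelian (kernels and cokernels of natural transformations of additive functors are computed objectwise, which is routine), that representables are projective \emph{and} that the embedding detects exactness in both directions, and especially that infinite coproducts exist in $\mathcal{L}$ and behave well enough to build the generator $U$ and to make $U$ projective. All of these are standard facts about functor categories, but they are precisely the place where one must leave the axiomatics of a general abelian category (where infinite sums may fail to exist) and exploit the concrete structure of $\mathbf{Ab}$-valued functors. An alternative, if one wishes to avoid Grothendieck-category machinery, is to cite the embedding theorem as a black box — but the sketch above is the honest route, and the only genuinely delicate step is the projectivity-and-generation analysis of $U$ in $\mathcal{L}$.
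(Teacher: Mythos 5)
The paper states this theorem without proof (it is cited to Freyd and Mitchell), so the only question is whether your sketch is sound. It is not: there is a genuine gap at the very first step, and it propagates. You define $\mathcal{L}$ to be the full category of additive functors $\A^{\mathrm{op}}\to\mathbf{Ab}$ and claim the Yoneda embedding $h\colon X\mapsto\Hom_\A(-,X)$ is exact into it, arguing that an epimorphism $g\colon B\to C$ in $\A$ becomes an epimorphism of presheaves ``because every morphism out of its target lifts,'' invoking the axiom that epis are cokernels. That axiom does not give lifting: $h(g)$ is an epimorphism of presheaves iff $\Hom_\A(X,B)\to\Hom_\A(X,C)$ is surjective for \emph{every} $X$, i.e.\ iff every map to $C$ factors through $g$ — which would force every object of $\A$ to be projective. (Take $\A=\mathbb{Z}\text{-}\mathbf{Mod}$, $g\colon\mathbb{Z}\to\mathbb{Z}/2$: the identity of $\mathbb{Z}/2$ does not lift.) So $h$ is only left exact into the presheaf category, and since $F\circ h(X)\cong\prod_Y\Hom_\A(Y,X)$, the composite you build is left exact but not exact.

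The classical repair — and the reason the theorem is genuinely hard — is to replace the presheaf category by the category $\mathbf{Lex}(\A^{\mathrm{op}},\mathbf{Ab})$ of left exact functors, where cokernels are computed by a reflection (not objectwise) and the Yoneda embedding \emph{does} become exact. But this trades away exactly the other half of your argument: in $\mathbf{Lex}$ the representables are no longer projective, and that category in general has enough injectives but not enough projectives (compare the remark in Lecture 1 about sheaf categories). So your generator $U=\bigoplus_X h(X)$ is not projective there and $\Hom(U,-)$ is not exact. Freyd and Mitchell get around this by using that $\mathbf{Lex}(\A^{\mathrm{op}},\mathbf{Ab})$ is a Grothendieck category with an injective cogenerator $I$ chosen so that the functor $\Hom(-,I)$ is faithful, exact, and (after further work, or after running the construction on $\A^{\mathrm{op}}$ to restore the variance) fully faithful into modules over $\End(I)$. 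In short: whichever of the two candidate ambient categories you pick, exactly one of your two key claims — exactness of $h$, or projectivity of the representables — fails, and the injective-cogenerator detour is not optional.
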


\section{Categories  of sheaves}{\label{S:Sheaves}}

\noindent {\small References: \cite{Hartshorne1}, \cite{KaSch},
\cite{Iversen}, \cite{GeMa}.}

\vspace{0.1in}

The main classes of examples of abelian categories are categories of
sheaves  over spaces. We give a quick review of sheaves and describe
the abelian category structure on them.

\separate

\noindent Let $\C$ be an arbitrary category (base) and $\A$ another
category (values).

\vspace{0.1in}

\noindent A {\bf presheaf} on $\C$ with values in $\A$ is a functor
$\F \: \C^{op} \to \A$. A morphism $f \: \F \to \G$ of presheaves is
a natural transformation of functors. The category of presheaves is
denoted by $\PreSh(\C,\A)$.

\vspace{0.1in}

\noindent {\bf Typical example.} Let $X$ be a topological space, and
let $\C=\mathsf{Open}_X$ be the category whose objects are open sets
of $X$ and whose morphisms are inclusions. Let $\A=\mathsf{Ab}$, the
category of abelian groups. A presheaf $\F$ of abelian groups on $X$
consists of:

\vspace{0.1in}

\begin{itemize}
  \item[$\triangleright$] A collection of abelian groups
       $$ \F(U), \ \ \forall \ U \ \ \text{open};$$


  \item[$\triangleright$] ``Restriction'' homomorphisms
     $$\F(U) \to \F(V), \ \ \forall \ V \subseteq U.$$
     Restriction homomorphisms should respect triple inclusions $W\subseteq
     V\subseteq U$ and  be equal to the identity for $U\subseteq U$.
\end{itemize}

\vspace{0.1in}

\begin{ex}{\label{E:3}}
\end{ex}
\begin{itemize}
    \item[$\bf{1.}$] {\em (Pre)sheaf of continuous functions on a
                                       topological space $X$.}
       The assignment
     $$U \mapsto  \OO_X^{cont}(U)=\{\text{continuous
                                functions on} \ \ U\}$$
                     is a presheaf on $X$.
                     The restriction maps are simply restriction of
                     functions. In fact, this is a presheaf of {\em rings}
                     because restriction maps are ring
                     homomorphisms.

        \vspace{0.1in}

    \item[$\bf{2.}$] {\em Constant presheaf.}  Let $A$ be an abelian
         group.  The assignment
                 $$U \mapsto A$$
        is a presheaf of groups. The restriction maps are the
        identity maps.
\end{itemize}

\vspace{0.1in}

{\small \noindent{\em Variations.} There are many variations on
these examples. For instance, in ($\mathbf{1}$) one can take the
(pre)sheaf of $C^{r}$ functions on a $C^r$-manifold, or (pre)sheaf
of holomorphic functions on a complex manifold, and so on. These are
called {\em structure sheaves}. {\em Idea:} Structure sheaves encode
all the information about the structure in question (e.g, $C^r$,
analytic, holomorphic, etc.). So, for instance, a complex manifold
$X$ can be recovered from its {\em underlying topological space}
$X^{top}$ and the {\em sheaf} $\OO_X^{holo}$. That is, we can think
of the pair $(X^{top},\OO_X^{holo})$ as a complex manifold. }

\vspace{0.1in}

\noindent {\em Exercise.} Formulate the notion of a holomorphic map
of complex manifolds purely in terms of the pair
$(X^{top},\OO_X^{holo})$.

\vspace{0.1in}

\begin{prop}
  Let $\C$ be an arbitrary  category, and $\A$ an abelian category.
  Then
  $\PreSh(\C,\A)$ is an abelian category.
\end{prop}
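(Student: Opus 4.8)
The plan is to verify the defining properties of an abelian category for $\PreSh(\C,\A)$ by checking everything \emph{objectwise}, using the fact that $\A$ is itself abelian. The guiding principle throughout is that limits, colimits, kernels, cokernels, and images of presheaves are computed sectionwise: if $\varphi\colon\F\to\G$ is a morphism of presheaves, then one defines $(\ker\varphi)(U):=\ker\big(\varphi_U\colon\F(U)\to\G(U)\big)$, with restriction maps induced by those of $\F$ (using the universal property of kernels in $\A$ to get the dotted arrows), and dually for cokernels. One then checks that these objectwise constructions satisfy the requisite universal properties in the functor category, which is routine diagram chasing once one unwinds the definition of a natural transformation.

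Concretely, I would proceed in the following steps. First, equip $\Hom_{\PreSh(\C,\A)}(\F,\G)$ with an abelian group structure by adding natural transformations pointwise: $(\varphi+\psi)_U:=\varphi_U+\psi_U$; one must check $\varphi+\psi$ is still natural, which follows from additivity of the restriction maps of $\G$ composed with the fact that composition in $\A$ is bilinear. Bilinearity of composition of natural transformations then follows from bilinearity in $\A$ at each object $U$. This shows $\PreSh(\C,\A)$ is an $Ab$-category. Second, exhibit the zero object: the constant presheaf with value $0\in\A$, which is easily seen to be both initial and terminal. Third, produce finite biproducts: set $(\F\oplus\G)(U):=\F(U)\oplus\G(U)$ with componentwise restrictions; by Proposition \ref{P:sum} applied in $\A$ this is simultaneously a product and coproduct, and one checks the projections/inclusions are natural and satisfy the universal properties in the presheaf category. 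This establishes that $\PreSh(\C,\A)$ is additive.

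Fourth, construct kernels and cokernels objectwise as indicated above and verify their universal properties: given $\alpha\colon \mathcal{H}\to\F$ with $\varphi\circ\alpha=0$, the maps $\alpha_U$ factor uniquely through $\ker(\varphi_U)$ in $\A$, and uniqueness forces these factorizations to assemble into a natural transformation; dually for cokernels. Fifth — the crux of the matter — show every monomorphism in $\PreSh(\C,\A)$ is a kernel and every epimorphism is a cokernel. Here I would first observe that $\varphi$ is a monomorphism (resp.\ epimorphism) in the presheaf category if and only if each $\varphi_U$ is a monomorphism (resp.\ epimorphism) in $\A$: one direction is immediate from the objectwise nature of the zero morphism, and for the other direction, given $\psi\colon\mathcal{G}\to\mathcal{H}$ with $\varphi\circ\psi=0$ one uses that $\varphi_U$ mono in $\A$ kills $\psi_U$ for every $U$ (the epimorphism case is formally dual, precomposing instead). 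Granting this, if $\varphi$ is a monomorphism then each $\varphi_U$ is a monomorphism in the abelian category $\A$, hence equals the kernel of its cokernel $\F(U)\to\coker(\varphi_U)$; since the cokernel presheaf is computed objectwise, $\varphi$ is then the kernel of the presheaf morphism $\F\to\coker(\varphi)$. The epimorphism case is dual.

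The main obstacle is precisely Step five: one needs the equivalence ``$\varphi$ mono/epi in $\PreSh(\C,\A)$ $\iff$ $\varphi_U$ mono/epi in $\A$ for all $U$'', because without it the objectwise kernel/cokernel computations don't obviously interact correctly with the abstract categorical notions of mono and epi. Everything else is a mechanical but lengthy verification that objectwise constructions are functorial in the base variable and natural in the presheaf variable; I would state these naturality checks as routine and not belabor them. (Strictly speaking one could instead invoke the Freyd--Mitchell embedding to reduce to the case $\A = R\text{-}\mathbf{Mod}$ when $\A$ is small, but the direct objectwise argument is cleaner and avoids any size hypothesis.)
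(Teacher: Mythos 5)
Your objectwise construction is correct and is exactly the approach the paper intends (the paper merely records the sectionwise kernel/cokernel formulas and leaves the verification to the reader). One small caution on your Step five: the direction ``$\varphi$ mono in $\PreSh(\C,\A)$ $\Rightarrow$ each $\varphi_U$ mono in $\A$'' is the nontrivial one and is not ``immediate from the objectwise nature of the zero morphism''; it is cleanest to deduce it from your own objectwise kernels --- the inclusion $\ker\varphi\to\F$ is killed by $\varphi$, hence is the zero morphism, hence each $\ker(\varphi_U)=0$ and so each $\varphi_U$ is mono in the abelian category $\A$ (dually for epimorphisms via objectwise cokernels).
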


\noindent The kernel and cokernel of a morphism  $f \: \F \to \G$ of
presheaves are given by
        \vspace{0.1in}

\begin{center}
\fbox{
\begin{minipage}{2.4in}
 \ \ \ $\ker(f) \: \ \ U \mapsto \ker\big(\F(U)\llra{f_U}\G(U)\big)$ \\
  $\coker(f) \: \ \ U \mapsto \coker\big(\F(U)\llra{f_U}\G(U)\big)$
\end{minipage}}
\end{center}

\separate

\noindent A presheaf $\F$, say of abelian groups, rings etc., on $X$
is called a {\bf sheaf} if for every open $U \subseteq X$ and every
open cover $\{U_{\al}\}$ of $U$ the sequence

        \vspace{0.1in}

\begin{center}
\fbox{
\begin{minipage}{2.6in}
  $\F(U) \llra{res} \prod_{\al} \F(U_{\al}) \lra \prod_{\al,\be}\F(U_{\al}\cap
  U_{\be})^{\hbox{}}
  \vspace{0.1in} \\
  \hbox{} \ \ \ \ \ \ \ \ \ \ \ \  \ \ \  (f_{\al}) \mapsto (f_{\al}|_{U_{\al}\cap U_{\be}} -
             f_{\be}|_{U_{\al}\cap U_{\be}})$
\end{minipage}}
\end{center}
is exact.

\vspace{0.1in}

\begin{ex} Structure sheaves (e.g., Example \ref{E:3}.$\mathbf{1}$) are
  sheaves! More generally, every vector bundle $E \to X$ gives rise
  to a sheaf of vector spaces via the assignment $U \mapsto E(U)$, where $E(U)$
  stands for the space of sections of $E$ over $U$. Is a constant
  presheaf (Example \ref{E:3}.$\mathbf{2}$) a sheaf?
\end{ex}

\vspace{0.1in}

\noindent For an abelian category $\A$, we denote the
 full subcategory of $\PreSh(X,\A)$
whose objects are sheaves by  $\Sh(X,\A)$.

\vspace{0.1in}

\begin{prop}
  The category $\Sh(X,\A)$ is abelian.
\end{prop}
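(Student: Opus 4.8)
The plan is to use the previously established fact that $\PreSh(X,\A)$ is abelian and realize $\Sh(X,\A)$ as a full subcategory that is closed under the relevant constructions, the key new ingredient being the \emph{sheafification} functor. First I would recall that the inclusion $\iota\colon \Sh(X,\A)\hookrightarrow\PreSh(X,\A)$ admits a left adjoint $(-)^{\#}$, the sheafification, and that $(-)^{\#}$ is exact (it commutes with finite limits because it is built from a filtered colimit over refinements of covers, and it preserves colimits as a left adjoint). Given a morphism $f\colon\F\to\G$ of sheaves, I would then define its kernel in $\Sh(X,\A)$ to be the presheaf kernel $\ker(f)$ — this is already a sheaf, since a kernel is a limit and sheaves are closed under limits computed pointwise (one checks directly that $U\mapsto\ker(\F(U)\to\G(U))$ satisfies the sheaf axiom, using that $\F$ and $\G$ do). For the cokernel in $\Sh(X,\A)$ I would take $\coker^{\#}(f)$, the sheafification of the presheaf cokernel; the universal property of sheafification together with the universal property of the presheaf cokernel gives the universal property of the cokernel in $\Sh(X,\A)$.

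Next I would verify the additive structure: $\Sh(X,\A)$ inherits the $Ab$-enrichment from $\PreSh(X,\A)$ since $\Hom_{\Sh}(\F,\G)=\Hom_{\PreSh}(\F,\G)$ is already an abelian group and composition is bilinear; the zero presheaf is a sheaf and serves as the zero object; finite products are computed pointwise and a pointwise product of sheaves is a sheaf, so finite products (hence finite biproducts, by Proposition \ref{P:sum}) exist in $\Sh(X,\A)$. Thus $\Sh(X,\A)$ is additive. It then remains to check the two abelian axioms: existence of kernels and cokernels (done above), and the condition that every monomorphism is a kernel and every epimorphism is a cokernel.

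For the monomorphism/epimorphism axiom, I would argue as follows. A morphism of sheaves is a monomorphism in $\Sh(X,\A)$ iff it is a monomorphism in $\PreSh(X,\A)$ iff it is a pointwise monomorphism (using that $\iota$ has a left adjoint, so $\iota$ preserves monos, and that kernels in $\Sh$ agree with kernels in $\PreSh$); such an $f$ equals the kernel of its cokernel in $\PreSh$, and applying the exact functor $(-)^{\#}$ — which fixes sheaves — shows $f$ is the kernel, in $\Sh(X,\A)$, of its sheaf-theoretic cokernel. Dually, $f$ is an epimorphism in $\Sh(X,\A)$ iff $\coker^{\#}(f)=0$ iff $f$ induces an isomorphism on stalks (or: the presheaf image sheafifies to all of $\G$); writing $f$ in $\PreSh$ as the cokernel of its kernel and applying the exact sheafification functor realizes $f$ as the cokernel in $\Sh(X,\A)$ of the (already sheaf) kernel $\ker(f)\to\F$.

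The main obstacle is the exactness of sheafification — more precisely, that $(-)^{\#}$ preserves kernels — since preservation of cokernels is automatic from adjunction but preservation of finite limits requires the explicit plus-construction and the fact that filtered colimits are exact in $\A$ (which one has for $\A=R\text{-}\mathbf{Mod}$, and more generally under a mild hypothesis such as $\A$ being Grothendieck). I would either invoke this as a standard fact about sheafification or, if a self-contained argument is wanted, sketch the two-step plus-construction and note that each plus step commutes with finite limits. Everything else is a routine check that the pointwise constructions in $\PreSh(X,\A)$ restrict well to $\Sh(X,\A)$ once sheafification is in hand.
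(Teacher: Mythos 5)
Your proposal is correct and follows exactly the route the paper intends: the paper states the proposition without proof, but the remark immediately following it describes precisely your construction (kernels taken pointwise as in $\PreSh(X,\A)$, cokernels as the sheafification of the presheaf cokernel, with the exactness of sheafification as the key input). Two small slips worth fixing: a sheaf epimorphism induces \emph{surjections}, not isomorphisms, on stalks; and for a sheaf epimorphism $f$ that is not a presheaf epimorphism, $f$ is not literally ``the cokernel of its kernel in $\PreSh$'' --- the presheaf cokernel of $\ker(f)\to\F$ is the presheaf image of $f$, and one must then argue (via exactness of $(-)^{\#}$ and $\coker^{\#}(f)=0$, or via the local-surjectivity description you already quote) that its sheafification is all of $\G$.
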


\begin{rem}
\end{rem}
\begin{itemize}
  \item [1.] Monomorphisms in $\Sh(X,\A)$ are the same as the ones
      in $\PreSh(X,\A)$, but epimorphisms are different: $f \: \F
      \to \G$ is an epimorphism if for every open $U$ and every $a \in
      \G(U)$, \\
      \begin{center}
      \fbox{
        \begin{minipage}{3.2in}
          $\exists \ \{U_{\al}\},  \ \text{open cover of}  \ U,
          \ \text{such that:} \\ \forall\al, \ a|_{U_{\al}}  \ \text{is
         in the image of} \ \ f({U_\al}) \: \F(U_{\al}) \to \G(U_{\al}).$
        \end{minipage}}
      \end{center}

\vspace{0.1in}

  \item[2.] Kernels in $\Sh(X,\A)$ are defined in the same way as
     kernels in $\PreSh(X,\A)$, but cokernels are defined
     differently: if $f \: \F \to \G$ is a morphism of sheaves,
      the cokernel of $f$ is the

\vspace{0.1in}

      \begin{center}
       \fbox{
        \begin{minipage}{3.8in}
            sheaf associated to the presheaf \ \ $U \mapsto
            \coker\big(\F(U)\llra{f_U}\G(U)\big)$.
        \end{minipage}}
      \end{center}
\end{itemize}

\vspace{0.1in}

  We remark that there is a general procedure
for producing a sheaf $\F^{sh}$ out of a presheaf $\F$. This is
called {\em sheafification}. The sheaf $\F^{sh}$ is called the {\em
sheaf associated} to $\F$ and it comes with a natural morphism of
presheaves $i\: \F \to \F^{sh}$ which is universal among morphisms
$\F \to \G$ to sheaves $\G$. More details on this can be found in
\cite{Hartshorne1}.

\section{Abelian category of quasi-coherent sheaves on a scheme}{\label{S:Schemes}}

\noindent {\small References: \cite{Hartshorne1},  \cite{GeMa}.}

\vspace{0.1in}

We give a super quick review of schemes. We then look at the
category of quasi-coherent sheaves on a scheme.

\separate

\noindent The {\bf affine scheme} $(\Spec R, \OO_R)$  associated to
a commutative unital ring $R$ is a topological space $\Spec R$, the
space of prime ideals in $R$, together with a sheaf of rings
$\OO_R$, the {\em structure sheaf}, on it. Recall that, in the
topology of $\Spec R$ (called the {\em Zariski topology}) a closed
set is the set $V(I)$ of all prime ideals containing a given ideal
$I$. The sheaf $\OO_R$ is uniquely determined by the fact that, for
every $f \in R$, the ring of sections of $\OO_R$ over the open set
$U_f:=\Spec R - V(f)$ is the localization $R_{(f)}$, that is,
$\OO_R(U_f)=R_{(f)}$. In particular, $\OO_R(\Spec R)=R$. (Note: the
open sets $U_f$ form a basis for the Zariski topology on $\Spec R$.)

\vspace{0.1in}

\noindent A {\bf  scheme} $X$ consists of a pair $(X^{Zar},\OO_X)$
where $X$ is a topological space and $\OO_X$ is a sheaf of rings on
$X^{Zar}$. We require that $X$ can be covered by open sets $U$ such
that each $(U,\OO_U)$ is isomorphic to an affine scheme. (Here,
$\OO_U$ is the restriction of $\OO_X$ to $U$.)

\vspace{0.1in}

\begin{rem} This definition is modeled on Example
\ref{E:3}.$\mathbf{1}$; in particular, read the paragraph after the
example.
\end{rem}

\separate

\noindent A {\bf sheaf of modules} over a scheme $X$ is a sheaf $\F$
  of abelian groups on $X^{Zar}$ such that, for every open $U \subseteq
  X^{Zar}$, $\F(U)$ is endowed with an $\OO_X(U)$-module structure (and restriction maps respect
  the module structure).

\vspace{0.1in}

\noindent A sheaf of modules $\F$ over $X$ is called {\bf
  quasi-coherent} if for every  inclusion of the form
  $\Spec S=V \subseteq U=\Spec R$ of open sets in $X^{Zar}$
  we have

\vspace{0.1in}

   \begin{center}
       \fbox{
              $\F(V)\cong S\otimes_R \F(U)$.}
   \end{center}

\vspace{0.1in}

\begin{prop}
  The category $\OO_X$-$\mathbf{Mod}$ of $\OO_X$-modules on a scheme
  $X$ is an abelian category. The full subcategory
  $\mathbf{Quasi}$-$\mathbf{Coh}_X$ is also an abelian category.
\end{prop}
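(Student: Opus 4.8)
The plan is to establish the two assertions separately, first treating the category $\OO_X$-$\mathbf{Mod}$ of all sheaves of modules and then the full subcategory $\mathbf{Quasi}$-$\mathbf{Coh}_X$. For the first part, I would observe that the forgetful functor $\OO_X$-$\mathbf{Mod} \to \Sh(X^{Zar},\mathsf{Ab})$ is faithful and that all the abelian-category structure can be transported through it. Concretely, given a morphism $f \: \F \to \G$ of $\OO_X$-modules, one forms the kernel presheaf $U \mapsto \ker(f_U)$ and the sheafified cokernel exactly as for sheaves of abelian groups (using the Propositions of Section \ref{S:Sheaves}), and then notes that each group $\ker(f_U)$ is an $\OO_X(U)$-submodule of $\F(U)$, while $\coker(f_U)$ is an $\OO_X(U)$-quotient module; since sheafification commutes with the module structure (it is computed stalkwise, and each stalk $(\OO_X)_x$ acts on the stalk of the presheaf), the resulting kernel and cokernel sheaves are again $\OO_X$-modules. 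One then checks the axioms of an abelian category: the additive structure on $\Hom_{\OO_X\text{-}\mathbf{Mod}}(\F,\G)$ is inherited from that on morphisms of abelian sheaves, the zero sheaf and finite direct sums are formed objectwise and are automatically $\OO_X$-modules, and the conditions ``every mono is a kernel, every epi is a cokernel'' follow because a morphism of $\OO_X$-modules which is mono (resp. epi) as a map of abelian sheaves realizes the same kernel (resp. cokernel) computation, and these are all compatible with the module structure. In short, $\OO_X$-$\mathbf{Mod}$ is abelian because $\Sh(X^{Zar},\mathsf{Ab})$ is and the module structure is preserved under every relevant construction.

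For the second part, the strategy is to show that $\mathbf{Quasi}$-$\mathbf{Coh}_X$ is closed under kernels and cokernels taken in $\OO_X$-$\mathbf{Mod}$, and that it contains the zero object and is closed under finite direct sums; once this is known, the fact that it is a \emph{full} subcategory closed under these operations forces it to be abelian, with the same kernels, cokernels, images, and the same validity of ``mono = kernel, epi = cokernel'' (any mono in the subcategory is a mono in $\OO_X$-$\mathbf{Mod}$, hence a kernel there, and its kernel-partner lies in the subcategory). The zero sheaf and finite direct sums obviously satisfy the base-change condition $\F(V) \cong S \otimes_R \F(U)$ because tensor product commutes with finite direct sums. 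The real content is in the kernel and cokernel: given $f \: \F \to \G$ with both $\F$ and $\G$ quasi-coherent, I must verify that for $\Spec S = V \subseteq U = \Spec R$ one has $\ker(f)(V) \cong S \otimes_R \ker(f)(U)$ and similarly for the cokernel. On an affine open $U = \Spec R$, a quasi-coherent sheaf is determined by the $R$-module $M = \F(U)$ (via the $\widetilde{M}$ construction), and a morphism of quasi-coherent sheaves restricted to $U$ corresponds to an $R$-module map $M \to N$; then kernel and cokernel on $U$ are just the $R$-module kernel and cokernel, and the base-change identity reduces to the statement that for a \emph{flat} ring map $R \to S$ (and open immersions $\Spec S \hookrightarrow \Spec R$ are flat, being localizations at least on a basis) the functor $S \otimes_R -$ is exact, so it commutes with $\ker$ and $\coker$.

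The main obstacle, and the step I would spend the most care on, is precisely this flatness/exactness input: one needs that restriction to a smaller affine open is given by a flat base change, so that $S \otimes_R -$ preserves exact sequences and hence kernels and cokernels. For basic opens $U_f \subseteq \Spec R$ this is transparent since $S = R_{(f)}$ is a localization and localizations are flat; for a general inclusion of affine opens one reduces to this case by covering $V$ by basic opens of $U$ and using that the quasi-coherence condition and flatness are local. A secondary subtlety is checking that the kernel and cokernel sheaves, which a priori are defined as sheaves of $\OO_X$-modules, actually satisfy the quasi-coherence condition \emph{on all} inclusions $V \subseteq U$ of affines and not merely on a basis; this follows by the usual argument that a sheaf of modules which is quasi-coherent on a basis of affine opens is quasi-coherent, together with the observation that, because everything in sight is computed locally, it suffices to check on such a basis. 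Once the flat-base-change exactness is in hand, all remaining verifications — additivity, zero object, finite sums, and the ``mono/epi'' axioms — are immediate from the corresponding facts in $\OO_X$-$\mathbf{Mod}$, since $\mathbf{Quasi}$-$\mathbf{Coh}_X$ inherits them as a full subcategory closed under kernels and cokernels. A precise and complete treatment can be found in \cite{Hartshorne1}.
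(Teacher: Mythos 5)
The paper gives no proof of this proposition at all: it is stated as part of a ``super quick review,'' with the verification deferred to the references \cite{Hartshorne1} and \cite{GeMa}. So there is nothing to compare against line by line; judged on its own, your argument is the standard one from those sources and is essentially correct. Transporting the abelian structure of $\Sh(X^{Zar},\mathsf{Ab})$ along the forgetful functor, and then exhibiting $\mathbf{Quasi}$-$\mathbf{Coh}_X$ as a full additive subcategory of $\OO_X$-$\mathbf{Mod}$ closed under kernels, cokernels, finite sums and zero, is exactly the right decomposition, and your identification of flatness of $R\to S$ for an inclusion of affine opens $\Spec S\subseteq \Spec R$ as the essential input is on target. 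The one step I would urge you to make fully explicit is the claim that ``the cokernel on $U=\Spec R$ is just the $R$-module cokernel.'' Since the cokernel in a category of sheaves is the \emph{sheafification} of the presheaf $V\mapsto\coker(f_V)$ (as recalled in Section \ref{S:Sheaves}), you must argue that this sheafification does not change the sections over affine opens. The point is that, for $f$ corresponding to $M\to N$ over $\Spec R$, exactness of localization gives $\coker(M_g\to N_g)\cong\coker(M\to N)_g$ on each basic open $U_g$, so the presheaf cokernel already agrees on the basis $\{U_g\}$ with the sheaf $\widetilde{\coker(M\to N)}$, and hence its sheafification is that quasi-coherent sheaf with the expected global sections. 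Your flatness observation is precisely what makes this work, but as written the sheafification issue is glossed over, and it is the one place where the argument could silently fail (it genuinely does fail, for instance, for the presheaf cokernel of a map of non-quasi-coherent sheaves). With that point spelled out, the proof is complete and matches the treatment in \cite{Hartshorne1}.
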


\begin{ex}
   To an  $R$-module $M$ there is  associated a quasi-coherent sheaf
   $\tilde{M}$ on $X=\Spec R$ which is characterized by the property
   that $\tilde{M}(U_f)=M_{(f)}$, for every $f \in R$. In fact, every
   quasi-coherent sheaf on $\Spec R$ is of this form. More
   precisely, we have an equivalence of categories
   (\cite{Hartshorne1},
   Corollary II.5.5)

        \vspace{0.1in}

   \begin{center}
       \fbox{
              $\mathbf{Quasi}$-$\mathbf{Coh}_{\Spec R} \cong R$-$\mathbf{Mod}$.}
   \end{center}

\end{ex}

\separate

The category $\mathbf{Quasi}$-$\mathbf{Coh}_X$ is a natural abelian
category associated with a scheme $X$. This allows us to do
homological algebra on schemes (e.g.,
 {\em sheaf cohomology}). The following
reconstruction theorem states that, indeed,
$\mathbf{Quasi}$-$\mathbf{Coh}_X$ captures all the information about
$X$.

\vspace{0.1in}

\begin{prop}[Gabriel-Rosenberg Reconstruction Theorem \oldcite{Ro}]
  A scheme $X$ can be reconstructed, up to isomorphism, from
  the abelian category $\mathbf{Quasi}$-$\mathbf{Coh}_X$.
\end{prop}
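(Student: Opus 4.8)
The plan is to reconstruct $X$ in two stages, first recovering its underlying topological space $X^{Zar}$ as a certain space attached to the abelian category $\mathcal{A}=\mathbf{Quasi}\text{-}\mathbf{Coh}_X$, and then recovering the structure sheaf $\mathcal{O}_X$ as a sheaf of rings on that space. The key idea is that the points and open sets of $X$ must be extracted from purely category-theoretic data. For the affine case $X=\Spec R$ we know from the equivalence $\mathbf{Quasi}\text{-}\mathbf{Coh}_{\Spec R}\cong R\text{-}\mathbf{Mod}$ that this is really a Morita-type reconstruction question: one cannot recover $R$ itself (only its Morita equivalence class), but one \emph{can} recover $\Spec R$ together with its sheaf of rings, because the extra geometric structure (the topology, the local rings) rigidifies things. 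So the first step is to identify, inside $\mathcal{A}$, the lattice of ``open subsets'': these should correspond to the (weakly closed, or localizing) Serre subcategories of $\mathcal{A}$ of the form ``sheaves supported on a closed set'', equivalently the localizations $\mathcal{A}\to\mathbf{Quasi}\text{-}\mathbf{Coh}_U$. One constructs from $\mathcal{A}$ a topological space $\operatorname{Spec}\mathcal{A}$ whose points are suitable equivalence classes of indecomposable injectives, or spectrum-like objects, and whose open sets are these localizing subcategories, and then checks that for $\mathcal{A}=\mathbf{Quasi}\text{-}\mathbf{Coh}_X$ this space is canonically homeomorphic to $X^{Zar}$.

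Once the space $X^{Zar}$ is recovered as $\operatorname{Spec}\mathcal{A}$, the next step is to recover the structure sheaf. For each recovered open $U\subseteq\operatorname{Spec}\mathcal{A}$ we have the corresponding quotient (localization) abelian category $\mathcal{A}_U$, and $\mathcal{O}_X(U)$ must be extracted from $\mathcal{A}_U$. The natural candidate is the endomorphism ring of a distinguished object — concretely, $\mathcal{O}_X(U)=\operatorname{End}_{\mathcal{A}_U}(\mathcal{O}_U)$, where $\mathcal{O}_U$ is the image of the structure sheaf, which should be characterizable categorically (e.g.\ as a generator with suitable properties, or via the monoidal structure if one remembers it; but the point of Gabriel--Rosenberg is to do it with the bare abelian category, using the center of the category or a canonically defined object). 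So the plan is: define a presheaf of rings on $\operatorname{Spec}\mathcal{A}$ by $U\mapsto$ (some canonical ring built from $\mathcal{A}_U$), show it is a sheaf, show it restricts to the affine case to give $R\mapsto R$ on $\Spec R$ (using $\OO_R(U_f)=R_{(f)}$), and glue.

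The affine case is really the heart of the argument: one must show that from $R\text{-}\mathbf{Mod}$ one can functorially reconstruct $(\Spec R,\mathcal{O}_R)$. Here the prime ideals of $R$ — hence the points of $\Spec R$ — are recovered as equivalence classes of indecomposable injective $R$-modules (the injective hull of $R/\mathfrak{p}$ for each prime $\mathfrak{p}$; Gabriel's original observation for Noetherian rings, with refinements in general), the Zariski topology is recovered from the containment relations among the associated localizing subcategories, and the local ring $\mathcal{O}_{R,\mathfrak{p}}=R_{\mathfrak p}$ is recovered as an endomorphism ring in the localized category. Then the general scheme case follows by covering $X$ by affines: the recovered space and recovered sheaf restrict correctly over each affine open, and since both the construction and the verification are canonical, they glue to give $(X^{Zar},\mathcal{O}_X)$ back up to isomorphism. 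The main obstacle, and the step requiring the real work, is precisely this affine reconstruction — in particular making the recovery of points and topology work for non-Noetherian rings and checking that the endomorphism-ring recipe for the structure sheaf is independent of all choices and genuinely reproduces the localizations $R_{(f)}$; I would lean on \cite{Ro} for the technical heart and present only the gluing and the statement of the affine result in detail.
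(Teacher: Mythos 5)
The paper offers no proof of this proposition: it cites Rosenberg \cite{Ro} and, in the remark immediately following, describes precisely the construction you outline — a topological space $\Spec\A$ together with a sheaf of rings $\OO_{\A}$ built from the bare abelian category $\A$, naturally isomorphic to $(X^{Zar},\OO_X)$ when $\A=\mathbf{Quasi}$-$\mathbf{Coh}_X$ — so your plan takes essentially the same route, deferring the same technical heart (the affine case, the identification of points and opens with suitable spectra/localizing subcategories) to \cite{Ro}. The one point worth underlining is that you correctly back away from defining $\OO_X(U)$ as the endomorphism ring of a distinguished object (which would be circular, and in any case could only recover a Morita class) and land on the center of the localized category, which is exactly the device that makes the reconstruction work for commutative rings.
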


More generally, Rosenberg \cite{Ro}, building on the work of
Gabriel, associates to an abelian category $\A$ a topological space
$\Spec\A$ together with a sheaf of rings $\OO_{\A}$ on it. In the
case where $\A=\mathbf{Quasi}$-$\mathbf{Coh}_X$, the pair
$(\Spec\A,\OO_{\A})$ is naturally isomorphic to  $(X^{Zar},\OO_X)$.

The above theorem is a starting point in non-commutative algebraic
geometry. It means that one can think of the abelian category
$\mathbf{Quasi}$-$\mathbf{Coh}_X$ itself as a space.

\section{Morita equivalence of rings}

\noindent {\small References:  \cite{Weibel},  \cite{GeMa}.}

\vspace{0.1in}

We saw that, by the Gabriel-Rosenberg Reconstruction Theorem, we can
regard the abelian category $\mathbf{Quasi}$-$\mathbf{Coh}_X$ as
being ``the same'' as the scheme $X$ itself. The Gabriel-Rosenberg
Reconstruction Theorem has a more classical precursor.

\begin{thm}[Gabriel \oldcite{Gabriel,Freyd}]
   Let $R$ be a unital ring (not necessarily commutative).
   Then $R$-$\mathbf{Mod}$ has a small projective
   generator (e.g., $R$ itself), and is closed under arbitrary coproducts.
   Conversely,  let $\A$ be an
   abelian category with a small projective generator $P$ which is closed
   under arbitrary
   coproducts. Let $R=(\End P)^{op}$. Then $\A\cong R$-$\mathbf{Mod}$.
\end{thm}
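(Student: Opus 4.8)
The plan is to prove the Gabriel characterization (the Morita-type theorem) by constructing the equivalence explicitly via the functor $\Hom_{\A}(P,-)$. Given the hypotheses, set $R=(\End_{\A} P)^{op}$, so that for any object $X\in\A$ the abelian group $\Hom_{\A}(P,X)$ carries a natural left $R$-module structure (precomposition with endomorphisms of $P$, with the opposite convention making this a genuine left action). This defines an additive functor $H:=\Hom_{\A}(P,-)\colon\A\to R\text{-}\mathbf{Mod}$. The easy half of the statement (that $R\text{-}\mathbf{Mod}$ has $R$ as a small projective generator and admits arbitrary coproducts) I would dispatch quickly: $R$ is small because $\Hom_R(R,M)\cong M$ commutes with colimits, projective because that same iso shows $\Hom_R(R,-)$ is exact, a generator because $\Hom_R(R,M)=0$ forces $M=0$; and $R\text{-}\mathbf{Mod}$ is cocomplete, in particular closed under coproducts.

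For the converse I would proceed in three steps. First, fully faithfulness on a ``generating'' piece: the canonical map $\End_{\A}(P)\to\Hom_R(HP,HP)=\Hom_R(R,R)$ is an isomorphism essentially by definition (both sides are $R^{op}$ as rings, or $R$ as an abelian group), and more generally $H$ induces isomorphisms $\Hom_{\A}(P^{(I)},X)\xrightarrow{\sim}\Hom_R(R^{(I)},HX)$ for any coproduct of copies of $P$, using that $P$ is small (so $H$ commutes with the relevant coproducts) and that $R^{(I)}$ is free. Second, I would use the projective generator hypothesis to produce, for every $X\in\A$, a presentation $P^{(J)}\to P^{(I)}\to X\to 0$: since $P$ is a generator the evaluation map $P^{(\Hom(P,X))}\to X$ is an epimorphism, and repeating on its kernel gives the presentation. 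Third, I would show $H$ is an equivalence by checking it is essentially surjective and fully faithful on all of $\A$. Essential surjectivity: given a left $R$-module $M$, choose a free presentation $R^{(J)}\xrightarrow{\varphi} R^{(I)}\to M\to 0$; by the first step $\varphi$ comes from a unique map $P^{(J)}\to P^{(I)}$, and one sets $X:=\coker(P^{(J)}\to P^{(I)})$. Since $H=\Hom_{\A}(P,-)$ is right exact (here is where projectivity of $P$ is used — it is even exact) and preserves the coproducts involved (smallness of $P$), $H$ applied to the presentation of $X$ yields exactly the presentation of $M$, whence $HX\cong M$. Full faithfulness: for arbitrary $X,Y$ write a presentation $P^{(J)}\to P^{(I)}\to X\to 0$, apply $\Hom_{\A}(-,Y)$ and $\Hom_R(-,HY)$, and compare the resulting left-exact sequences using the five-lemma together with the already-established isomorphisms on the free/coproduct terms.

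The main obstacle I expect is making precise and correct the interaction between the hypotheses and the categorical operations: specifically, one must verify that \emph{smallness} of $P$ (i.e. $\Hom_{\A}(P,-)$ commutes with filtered colimits, or at least with arbitrary coproducts — the precise meaning has to be pinned down) genuinely lets $H$ commute with the infinite coproducts $P^{(I)}$ appearing in the presentations, and that \emph{projectivity} of $P$ gives exactness (not merely right-exactness) of $H$, which is what forces $H$ to carry cokernels of maps between free-on-$P$ objects to cokernels of the corresponding maps of free modules. A secondary subtlety is bookkeeping the $R$ versus $R^{op}$ conventions so that $\Hom_{\A}(P,X)$ is honestly a \emph{left} $R$-module and the identifications $HP\cong R$, $\End_R(R)\cong R^{op}$ are consistent throughout. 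Once these points are handled, the five-lemma arguments for full faithfulness and the presentation argument for essential surjectivity are routine diagram chases, and the forward direction is immediate.
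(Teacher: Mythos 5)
The paper states this theorem without proof, citing Gabriel and Freyd, so there is nothing internal to compare against. Your proposal is the standard argument from those references --- take $H=\Hom_{\A}(P,-)$ valued in left $R$-modules, present every object as a cokernel of a map of coproducts of copies of $P$, use projectivity for exactness of $H$ and smallness for preservation of the relevant coproducts, then get full faithfulness by a five-lemma comparison and essential surjectivity by lifting free presentations --- and it is correct, with the two subtleties you flag (pinning down smallness as ``$\Hom_{\A}(P,-)$ preserves arbitrary coproducts'' and the $R$ versus $R^{op}$ bookkeeping) being exactly the points that need care.
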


{\small
\begin{rem} This, however, is not exactly a reconstruction
theorem: the projective generator $P$ is never unique, so we obtain
various rings $S=\End P$ such that $\A\cong S$-$\mathbf{Mod}$.
Nevertheless, all such rings are regarded as giving  the {\em same}
``noncommutative scheme''. So, from the point of view of
noncommutative geometry they are the same. By the Gabriel-Rosenberg
Reconstruction Theorem, if such $R$ and $S$ are both commutative,
then they are necessarily isomorphic.
\end{rem} }

\noindent Two  rings $R$ and $S$ are called {\bf Morita equivalent}
if $R$-$\mathbf{Mod}\cong S$-$\mathbf{Mod}$.

\begin{ex} For any ring $R$, the ring $S=M_n(R)$ of $n\times n$
  matrices over $R$ is Morita equivalent to $R$. ({\em Proof.} In the
  above theorem
  take $P=R^{\oplus n}$, or use the next theorem.)
\end{ex}

\vspace{0.1in}

The following characterization of Morita equivalence is important.
The proof is not hard.

\begin{thm}
  Let $R$ and $S$ be rings. Then the following are equivalent:
   \begin{itemize}
     \item[$\mathbf{i.}$] The categories $R$-$\mathbf{Mod}$
       and $S$-$\mathbf{Mod}$ are equivalent.

     \item[$\mathbf{ii.}$] There is an $S-R$ bimodule $M$ such that
       the functor $M\otimes_R- \: R$-$\mathbf{Mod}\to
       S$-$\mathbf{Mod}$ is an equivalence of categories.

     \item[$\mathbf{iii.}$] There is a finitely generated projective
       generator $P$ for $R$-$\mathbf{Mod}$ such that $S\cong\End P$.
   \end{itemize}
\end{thm}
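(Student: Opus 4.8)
The plan is to establish the cycle of implications $\mathbf{ii.}\Rightarrow\mathbf{i.}\Rightarrow\mathbf{iii.}\Rightarrow\mathbf{ii.}$, since this keeps each step short. The implication $\mathbf{ii.}\Rightarrow\mathbf{i.}$ is immediate: if some functor of the form $M\otimes_R-$ is an equivalence, then in particular $R$-$\mathbf{Mod}$ and $S$-$\mathbf{Mod}$ are equivalent, which is exactly $\mathbf{i.}$.

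For $\mathbf{i.}\Rightarrow\mathbf{iii.}$, I would let $F\:R$-$\mathbf{Mod}\to S$-$\mathbf{Mod}$ be an equivalence and set $P=F^{-1}(S)$, where $S$ is viewed as a left module over itself. The point is that an equivalence of categories preserves every categorical property, and the three properties I need are categorical: $S$ is a generator of $S$-$\mathbf{Mod}$ (it is a generator in the earlier sense of Gabriel's theorem, and being a generator can be phrased purely via $\Hom$-functors being faithful); $S$ is finitely generated, equivalently \emph{small} — i.e. $\Hom_S(S,-)$ commutes with arbitrary coproducts — which is again preserved by equivalences since they preserve coproducts; and $S$ is projective, i.e. $\Hom_S(S,-)$ is exact, which an exact equivalence certainly preserves. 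Hence $P$ is a finitely generated projective generator of $R$-$\mathbf{Mod}$. Finally $\End_R(P)\cong\End_S(F(P))=\End_S(S)\cong S$, because an equivalence induces ring isomorphisms on endomorphism rings (here $\End_S(S)\cong S$ acting by right multiplication — one should be mildly careful about op's, but the statement only asks for $S\cong\End P$, so either convention works after taking an opposite if necessary).

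For $\mathbf{iii.}\Rightarrow\mathbf{ii.}$, I would use the hom–tensor machinery. Given a finitely generated projective generator $P$ of $R$-$\mathbf{Mod}$ with $S\cong\End_R(P)$, the module $P$ is naturally a left $\End_R(P)$-module, hence a left $S$-module, and it remains a right $R$-module, so $P$ is an $S$-$R$-bimodule; also $P^\vee:=\Hom_R(P,R)$ is an $R$-$S$-bimodule. The claim is that $P^\vee\otimes_S-$ (or, depending on conventions, $\Hom_R(P,-)\cong P^\vee\otimes_S-$ since $P$ is finitely generated projective) gives an equivalence $R$-$\mathbf{Mod}\to S$-$\mathbf{Mod}$, with quasi-inverse $P\otimes_S-$. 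To verify this I would check the two unit/counit natural transformations $P\otimes_S P^\vee\to R$ and $P^\vee\otimes_S P\to S$ are isomorphisms of bimodules: the second is essentially the definition $\End_R(P)\cong S$ together with $\Hom_R(P,R)\otimes_S P\cong\Hom_R(P,P)$, valid since $P$ is finitely generated projective; the first, $P\otimes_S P^\vee\xrightarrow{\ \sim\ }R$, is where the \emph{generator} hypothesis enters — it holds because $P$ being a generator forces the trace ideal $\im(P\otimes_S P^\vee\to R)$ to be all of $R$, and projectivity makes the map injective. Once both are isomorphisms, a standard bimodule computation shows $P\otimes_S-$ and $P^\vee\otimes_S-$ are mutually inverse, establishing $\mathbf{ii.}$ (with the bimodule $M=P^\vee$, or $M=P$ after swapping the roles of $R$ and $S$).

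The main obstacle is the bookkeeping in $\mathbf{iii.}\Rightarrow\mathbf{ii.}$: proving that $P\otimes_S P^\vee\to R$ is an isomorphism uses the generator hypothesis in an essential way (reducing $R$ to a summand of a finite direct sum of copies of $P$ via the "dual basis"-type argument), and one must track left/right actions carefully so that all four of $P\otimes_S P^\vee\cong R$, $P^\vee\otimes_S P\cong S$, and the resulting natural isomorphisms $P\otimes_S P^\vee\otimes_R N\cong N$, $P^\vee\otimes_S P\otimes_S M'\cong M'$ are maps of the \emph{correct} bimodules. The categorical-invariance arguments in $\mathbf{i.}\Rightarrow\mathbf{iii.}$ and the triviality of $\mathbf{ii.}\Rightarrow\mathbf{i.}$ are routine by comparison.
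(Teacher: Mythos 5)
The paper offers no proof of this theorem (it only remarks that ``the proof is not hard''), so there is nothing to compare against; your argument is the standard Morita-theoretic one and is essentially correct. Two small points to tighten. First, in $\mathbf{i.}\Rightarrow\mathbf{iii.}$, ``finitely generated $\Leftrightarrow$ small'' is not true for arbitrary modules, but it is true for \emph{projective} modules (a small summand of a free module has its identity map factoring through a finite subsum), and since $P=F^{-1}(S)$ is projective this is all you need --- just say so. Second, in $\mathbf{iii.}\Rightarrow\mathbf{ii.}$ several of your tensor subscripts are off: with $P$ an $(R,S)$-bimodule (endomorphisms written on the right) and $P^\vee=\Hom_R(P,R)$ an $(S,R)$-bimodule, the functor $R$-$\mathbf{Mod}\to S$-$\mathbf{Mod}$ is $P^\vee\otimes_R-\cong\Hom_R(P,-)$, not $P^\vee\otimes_S-$, and the two evaluation maps are $P\otimes_S P^\vee\to R$ and $P^\vee\otimes_R P\to S$. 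You have correctly identified where each hypothesis enters (the generator condition gives surjectivity of the trace map, finite generation plus projectivity gives the dual-basis isomorphism onto $\End_R(P)$, and injectivity of $P\otimes_S P^\vee\to R$ is then extracted by the usual associativity bootstrap), so once the bimodule bookkeeping is fixed the proof goes through.
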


\section{Appendix: injective and projective objects in abelian categories}

We will need to deal with injective and projective objects in the
next lecture, so we briefly recall their definition.

\separate

\noindent An object $P$ in an abelian category is called {\bf
projective} if it has the following lifting property:

\begin{center}
\fbox{
\begin{minipage}{1.2in}
    $$\xymatrix@=16pt@M=8pt@C=28pt{ & B \ar@{->>}[d]^{\text{epi}} \\
        P \ar[r] \ar@{..>}[ur]^{\exists} & C
    }$$
\end{minipage}}
\end{center}

        \vspace{0.1in}

 \noindent Equivalently, $P$ is projective if the
functor
 $\Hom_{\A}(P,-) \: \A \to \mathsf{Ab}$ takes exact sequences in
 $\A$ to exact sequences of abelian groups.

\vspace{0.1in}

\noindent An object $I$ in an abelian category is called {\bf
injective} if it has the following extension property:

\begin{center}
\fbox{
\begin{minipage}{1.2in}
    $$\xymatrix@=16pt@M=8pt@C=28pt{ & B  \ar@{..>}[dl]_{\exists}  \\
     I  &  A  \ar@{^(->}[u]_{\text{mono}} \ar[l]
    }$$
\end{minipage}}
\end{center}

        \vspace{0.1in}

 \noindent Equivalently, $I$ is injective if the
functor
 $\Hom_{\A}(-,I) \: \A^{op} \to \mathsf{Ab}$ takes exact sequences in
 $\A$ to exact sequences of abelian groups.

\separate

\noindent We say that $\A$ has {\bf enough projectives}
(respectively, {\bf enough injectives}), if for every object $A$
there exists an epimorphism $P \to A$ where $P$ is projective
(respectively, a monomorphism $A \to I$ where $I$ is injective).

\vspace{0.1in}

\noindent In a category with enough projectives (respectively,
enough injectives) one can always find {\em projective resolutions}
(respectively, {\em injective resolutions})  for objects.

\vspace{0.1in}

\noindent The category  $R$-$\mathbf{Mod}$ has enough injectives and
enough projectives. The categories $\PreSh(\C,\mathsf{Ab})$,
$\PreSh(\C,R$-$\mathbf{Mod}$), $\Sh(\C,\mathsf{Ab})$,
$\Sh(\C,R$-$\mathbf{Mod}$), and $\mathbf{Quasi}$-$\mathbf{Coh}_X$
have enough injectives, but in general they do not have enough
projectives.

\vspace{0.1in}

\noindent{\em Exercise.} Show that the abelian category of finite
abelian groups has no injective or projective object other than 0.
Show that in the category of vector spaces every object is both
injective and projective.

\newpage
{\LARGE\part*{Lecture 2: Chain complexes}}

\vspace{0.3in}

\noindent{\bf Overview.} Various cohomology theories in mathematics
 are constructed from chain complexes. However, taking the cohomology of
 a chain complex kills a lot of information contained in that chain complex.
 So, it is desirable to
 elevate the cohomological constructions to the chain complex level.
 In this lecture, we introduce the necessary machinery for doing so.
 The main tool here is the {\em mapping cone} construction, which should
 be thought of as a {\em homotopy cokernel} construction. We
 discuss in some detail the basic properties of the cone
 construction; we will see, in particular, how it allows us to construct long
 exact sequences on the chain complex level, generalizing the usual
 cohomology long exact sequences.

 In practice, one is only interested in chain complexes up to {\em
 quasi-isomorphism}. This leads to the notion of the {\em derived
 category} of chain complexes. The cone construction is well adapted
 to the derived category. Derived categories provide the correct
 setting for manipulating chain complexes; for instance, they allow
 us to construct {\em derived functors} on the chain complex level.

\setcounter{section}{0}

\vspace{0.3in}

\section{Why chain complexes?}

Chain complexes arise naturally in many areas of mathematics. There
are two main sources for (co)chain complexes:  {\em chains on
spaces} and {\em resolutions}. We give an example for each.

\separate

\noindent {\bf Chains on spaces.}
 Let $X$ be a topological space. There are various ways to
associate a chain complex to $X$. For example, {\em singular
chains}, {\em cellular chains} (if $X$ is a CW complex), {\em
simplicial chains} (if $X$ is triangulated), de Rham complex (if $X$
is differentiable), and so on. These complexes encode a great deal
of topological information about $X$ in terms of algebra (e.g.,
homology and cohomology).

\vspace{0.1in}

Observe the following two facts:

\vspace{0.1in}

\begin{itemize}

  \item Such constructions usually give rise to a
                  chain complex of {\em free} modules.

\vspace{0.1in}

  \item Various chain complexes associated to a given space
       are chain homotopy equivalent (hence, give rise to the same
        homology/cohomology).
\end{itemize}

\vspace{0.1in}

\noindent {\bf Resolutions.} Let us explain this with an example.
Let $R$ be a ring, and $M$ and $N$ $R$-modules. Recall how we
compute $\Tor_i(M,N)$. First, we choose a  free resolution
   $$\underbrace{\cdots \to P^{-2} \to P^{-1} \to P^0}_{P^{\bullet}} \to M.$$
Then, we define
   $$\Tor_i(M,N)=H^{-i}(P^{\bullet}\otimes N).$$

\vspace{0.1in}

Observe the following two facts:

\vspace{0.1in}

\begin{itemize}

  \item The complex  $P^{\bullet}$ is a complex of {\em free} modules.

\vspace{0.1in}

  \item Any two resolutions $P^{\bullet}$ and $Q^{\bullet}$ of $M$
       are chain homotopy equivalent (hence $\Tor$ is well-defined).
\end{itemize}

\separate

\noindent{\em Conclusion.} In both examples above, we replaced our
object with a chain complex of free modules that was unique up to
chain homotopy. We could then extract information about our object
by doing algebraic manipulations (e.g., taking homology) on this
complex. Therefore, {\em the real object of interest is the (chain
homotopy class of) a chain complex (of, say, free modules).} Of
course, instead of working with the complex itself, one could choose
to work with its (co)homology, but one loses some information this
way.

\vspace{0.1in}

\begin{rem}
   Complexes of {\em projective} modules (e.g,
   projective resolutions) work equally well as complexes of free
   modules. Sometimes, we are in a dual situation where complexes of
   {\em injective} modules are more appropriate (e.g., in computing
   $\Ext$ groups).
\end{rem}

\section{Chain complexes}

\noindent {\small References: \cite{Weibel}, \cite{GeMa},
\cite{Hartshorne2}, \cite{HiSt}, \cite{KaSch}, \cite{Iversen}, and
any book on algebraic topology.}

\vspace{0.1in}

We quickly recall a few definitions. We prefer to work with {\em
cohomological} indexing, so we work with {\em cochain} complexes.

\separate

\noindent A {\bf cochain complex} $C$ in an abelian category $\A$ is
 a sequence of objects in $\A$
   $$\cdots \llra{} C^{n-1} \llra{d} C^n \llra{d} C^{n-1} \llra{}
   \cdots,
   \ \ \ \ d^2=0.$$
Such a sequence is in general indexed by $\mathbb{Z}$, but in many
applications one works with complexes that are bounded below,
bounded above, or bounded on both sides.

\vspace{0.1in}

\noindent A {\bf chain map} $f \: B \to C$ between two cochain
 complexes is a sequence of $f^n \: B^n \to C^n$ of morphisms
 such that the following diagram commutes

  $$\xymatrix@=22pt@M=6pt@C=16pt{
     \cdots \ar[r]^(0.4){d^{n-2}} &  B^{n-1}  \ar[r]^{d^{n-1}}  \ar[d]^{f^{n-1}} &
            B^{n} \ar[r]^(0.4){d^{n}} \ar[d]^{f^{n}} &
                         B^{n+1} \ar[d]^{f^{n+1}} \ar[r]^{d^{n+1}}  & \cdots  \\
     \cdots \ar[r]^(0.4){d^{n-2}} &    C^{n-1} \ar[r]^{d^{n-1}} &
                   C^{n} \ar[r]^(0.4){d^{n}} & C^{n+1} \ar[r]^{d^{n+1}}  & \cdots   }$$

\vspace{0.1in}

\noindent A {\bf null homotopy} for a chain map $f \: B \to C$ is a
  sequence $s^n \: B^n \to C^{n-1}$ such that
   \vspace{0.1in}

 \begin{center}\fbox{$f^n=s^{n+1}\circ d^n + d^{n-1}\circ s^n, \ \ \forall n$}
  \end{center}

\vspace{0.1in}

\noindent We say two chain maps $f,g \: B \to C$ are {\bf chain
homotopic} if $f-g$ is null homotopic. We say $f \: B \to C$ is a
{\bf chain homotopy equivalence} if there exists $g \: C \to B$ such
that $f\circ g$ and $g\circ f$ are chain homotopy equivalent to the
corresponding identity maps.

\vspace{0.1in}

\noindent To any chain complex $C$ one can associate the following
objects in $\A$:

\begin{center}
\fbox{
\begin{minipage}{1.9in}
             $Z^n(C)=\ker(C^n\llra{d}C^{n+1})$ \\
                       $B^n(C)=\im(C^{n-1}\llra{d}C^{n})$ \\

\vspace{-0.1in}

                       $H^n(C)=Z^n(C)/B^n(C)$
\end{minipage}}
\end{center}

\vspace{0.1in}

\noindent {\em Exercise.} Show that a chain map $f \: B \to C$
induces morphisms on each of the above objects. In particular, we
have induced morphisms $H^n(f) \: H^n(B) \to H^n(C)$ for all $n$.
Show that if $f$ and $g$ are chain homotopic, then they induce the
same map on cohomology. In particular, a chain homotopy equivalence
induces isomorphisms on cohomologies.

\vspace{0.1in}

\noindent A chain map $f \: B \to C$ is called a {\bf
quasi-isomorphism} if it induces isomorphisms on all cohomologies.
(So every chain homotopy equivalence is a quasi-isomorphism.)

\vspace{0.1in}
 \noindent {\em Exercise.} When is a chain complex $C$
quasi-isomorphic to the zero complex? (Hint: exact.) When is $C$
chain homotopy equivalent to the zero complex? (Hint: split exact.)

\vspace{0.1in}

\noindent A {\bf short exact sequence} of chain complexes is a
sequence
  $$0 \to A \to B \to C \to 0$$
of chain maps which is exact at every $n \in \mathbb{Z}$. Such  an
exact sequence gives rise to a long exact sequence in cohomology
{\small
$$\cdots \overset{\partial}{\to} H^n(A) \to H^n(B) \to H^n(C)
          \overset{\partial}{\to} H^{n+1}(A) \to H^{n+1}(B) \to H^{n+1}(C)
                                            \overset{\partial}{\to} \cdots$$
}

\noindent A short exact sequence as above is called {\bf pointwise
split} (or {\bf semi-split}) if every epimorphism $B^n \to C^n$
admits a section, or equivalently, every monomorphism $A^n \to B^n$
is a direct summand.

\vspace{0.1in}
 \noindent {\em Exercise.} Does pointwise split imply that the maps
 $\partial$ in the above long exact sequence are zero? Is the converse true?
 (Answer: both implications are false.)

\section{Constructions on chain complexes}

\noindent {\small References: \cite{Weibel}, \cite{GeMa},
 \cite{Hartshorne2}, \cite{HiSt},
\cite{KaSch}, \cite{Iversen}.}

\vspace{0.1in}

In the following table we list a few constructions that are of great
importance in algebraic topology. The left column presents the
topological construction, and the right column is the cochain
counterpart.

 The right column is obtained from the left column as follows:
imagine that the spaces are triangulated and translate the
topological constructions in terms of  the simplices; this gives the
{\em chain complex} picture. The {\em cochain complex} picture is
obtained by the appropriate change in indexing (from homological to
cohomological).

 The reader is strongly encouraged to do this as an exercise. {\em
It is extremely important to pay attention to the orientation of the
simplices, and keep track of the signs accordingly.}

\vspace{0.2in}

{\footnotesize \hspace{-0.2in}
\begin{tabular}{c|c}
{\bf \large Spaces} & {\bf \large Cochains}  \\
\hline

$f \: X \to Y$ continuous map &  $f \: B \to C$ cochain map
\\
\hline
\begin{tabular}{c} \\
 $\Cyl(f)=(X\times I)\cup_f Y$ \\
\begin{picture}(150,110)(0,-4)
\includegraphics{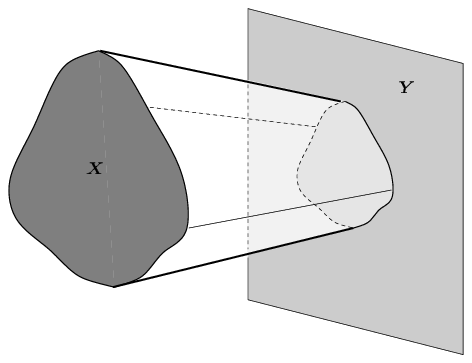}
\end{picture}

\end{tabular}
 &
  \begin{tabular}{c}      {\bf Mapping cylinder} \\ \\

                            $\Cyl(f)^n:=B^n\oplus B^{n+1} \oplus C^n$  \\
                            $B^n\oplus B^{n+1} \oplus C^n \llra{d}
                                 B^{n+1}\oplus B^{n+2} \oplus C^{n+1}$ \\
                            $(b',b,c) \mapsto
                             \big(d_B(b')-b,-d_B(b),f(b)+d_C(c)\big)$
                             \\ \\

                                 $\left(
                                   \begin{array}{ccc}
                                     d_B & -\id_B & 0 \\
                                     0   & -d_B   & 0 \\
                                     0   & f    & d_C \\
                                   \end{array}
                                               \right) $
                                                      \end{tabular}\\
\hline
\begin{tabular}{c} \\
$\Cone(f)=\Cyl(f)/X$ \\
\begin{picture}(150,100)(0,-5)\includegraphics{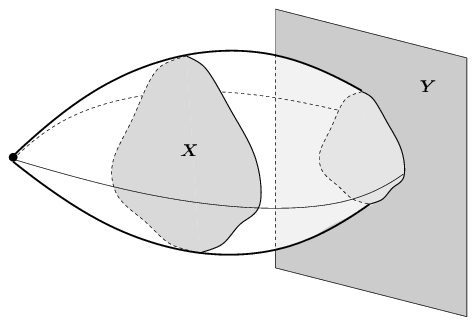}
\end{picture} \end{tabular}
&
    \begin{tabular}{c}   {\bf Mapping cone} \\ \\  \\ $\Cone(f)=\Cyl(f)/B$ \\ \\
       $\Cone(f)^n:=B^{n+1}
                                                             \oplus   C^n$      \\
                                $B^{n+1}\oplus C^n\llra{d}B^{n+2}\oplus C^{n+1}$\\
                                $(b,c) \mapsto
                                \big(-d_B(b),f(b)+d_C(c)\big)$

                                                      \end{tabular}\\
\hline
\begin{tabular}{c} \\
 $\Cone(X)=\Cone(\id_X)=X\times I / X\times \{0\}$ \\
\begin{picture}(150,85)(0,-5)\includegraphics{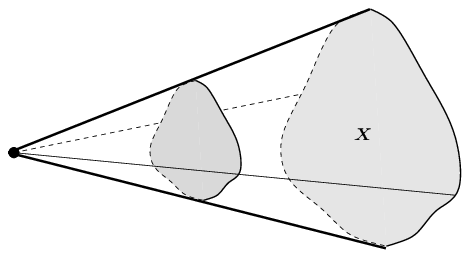}
\end{picture} \end{tabular}
&
  \begin{tabular}{c}    {\bf Cone} \\ \\
                              $\Cone(B):=\Cone(\id_B)$\\ \\
                                         $\Cone(B)^n=B^{n+1}\oplus B^n$\\
                                         $B^{n+1}\oplus B^n\llra{d}
                                                 B^{n+2}\oplus B^{n+1}$\\
                                         $(b,b')\mapsto
                                         \big(-d(b),b+d(b')\big)$
                                           \end{tabular}\\
\hline
\begin{tabular}{c} \\
 $\Sigma(X)=\Cone(X)/X=X\times I / X\times \{0,1\}$ \\
\begin{picture}(150,80)(0,-7)\includegraphics{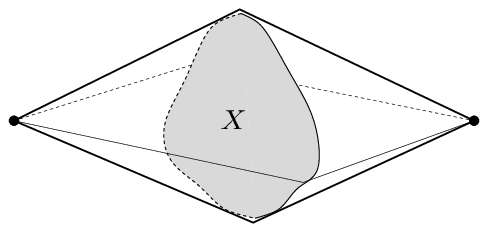}
\end{picture} \end{tabular}
 &  \begin{tabular}{ c}     {\bf Shift (Suspension)} \\ \\
                                     $B[1]:=\Cone(B)/B$      \\ \\
                                             $B[1]^n:=B^{n+1}$        \\
                                          $B[1]^n\llra{d}   B[1]^{n+1}$\\
                                               $b\mapsto -d(b)$       \\
                                       \end{tabular}\\
\hline
\end{tabular}
}

\vspace{0.1in}

\noindent {\em Some history.} {\small These constructions were
originally used by topologists (e.g., Puppe \cite{Puppe}) as a
unified way of treating various cohomology theories for topological
spaces (the so-called {\em generalized cohomology theories}
\cite{Adams}), and soon they became a standard tool in {\em stable
homotopy theory}. Around the same time, Grothendieck and Verdier
developed the same machinery for cochain complexes, to be used in
Grothendieck's formulation of duality theory and the Riemann-Roch
theorem for schemes \cite{Hartshorne2}. This led to the invention of
{\em derived categories} and {\em triangulated categories}
\cite{Verdier}. As advocated by Grothendieck, it has become more and
more apparent over time that working in {\em derived categories} is
a better alternative to working with cohomology. Nowadays people
consider even richer structures such as {\em dg-categories}
\cite{BoKa,Keller3}, $A_{\infty}$-{\em categories} \cite{Keller4},
{\em (stable) model categories} (\cite{Ho}, Section 7), etc. The
idea is that, the higher one goes in this hierarchy, the more {\em
higher-order} cohomological information (e.g.,  Massey products,
Steenrod operations, or other cohomology operations) is retained. In
these lectures we will not go beyond derived categories.
 }

\separate

\noindent {\bf An important fact.} Taking the mapping cone of $f\: X
\to Y$ is somewhat like taking cokernels: we are essentially killing
the image of $f$, but softly (i.e., we make it contractible). That
is why $\Cone(f)$ is sometimes called the {\em homotopy cofiber} or
{\em homotopy cokernel} of $f$; see \cite{May1}, $\S$8.4,
especially, the first lemma on page 58.

\begin{prop}
   In the following diagram of cochain complexes, if $g$ and $h$ are
   chain
   equivalences, then so is the induced map on the mapping cones
     $$\xymatrix@=14pt@M=6pt@C=10pt{B \ar[r]^f \ar[d]^g  &
                           C\ar[r] \ar[d]^h & \Cone(f) \ar[d] \\
        B' \ar[r]^{f'}   & C' \ar[r] & \Cone(f')   }$$
   A similar statement is true for topological spaces.
\end{prop}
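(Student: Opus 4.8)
The plan is to reduce everything to one auxiliary fact: \emph{a chain map is a chain homotopy equivalence if and only if its mapping cone is contractible} (i.e.\ chain homotopy equivalent to the zero complex). This is standard — one direction builds a contracting homotopy of the cone directly out of a homotopy inverse together with its two homotopies, and the other extracts a homotopy inverse from a contracting homotopy of the cone by a block-matrix computation. Granting it, the induced map $\phi\:\Cone(f)\to\Cone(f')$, $(b,c)\mapsto(g(b),h(c))$, will be a chain homotopy equivalence as soon as we show $\Cone(\phi)$ is contractible, and the hypotheses tell us at the same time that $\Cone(g)$ and $\Cone(h)$ are contractible.

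The bridge between these is the \emph{iterated cone identity}. Commutativity of the square turns $(b,b')\mapsto(f(b),f'(b'))$ into a chain map $\Cone(g)\to\Cone(h)$, and there is a natural isomorphism of cochain complexes
$$\Cone\big(\phi\:\Cone(f)\to\Cone(f')\big)\ \cong\ \Cone\big((f,f')\:\Cone(g)\to\Cone(h)\big).$$
Both sides have, in each degree $n$, the same four summands $B^{n+2}\oplus C^{n+1}\oplus B'^{n+1}\oplus C'^{n}$; one matches the two differentials by permuting the summands and inserting a sign on the $B$-summand — this is the familiar sign manipulation showing that the total complex of a commuting square does not depend on whether one cones rows first or columns first. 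Hence it is enough to prove that the cone of $(f,f')\:\Cone(g)\to\Cone(h)$ is contractible.

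That last point is an instance of a clean sublemma: \emph{the mapping cone of any chain map $p\:X\to Y$ between contractible complexes is contractible}. Fixing contracting homotopies $\rho_X,\rho_Y$ (so $d\rho_X+\rho_X d=\id_X$ and $d\rho_Y+\rho_Y d=\id_Y$), one looks for a contracting homotopy of $\Cone(p)$ of the shape $R(x,y)=(-\rho_X x,\ \rho_Y y+\mu(x))$. The diagonal terms already yield the identity on the $X[1]$- and $Y$-components, and the only surviving requirement is $d_Y\mu-\mu d_X=p\rho_X-\rho_Y p$. The right-hand side is a cycle in the complex $\Hom^{\bullet}(X,Y)$ of graded morphisms — a one-line check using that $p$ commutes with differentials — and $\Hom^{\bullet}(X,Y)$ is contractible because $X$ is; hence such a $\mu$ exists. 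Chaining the three steps proves the proposition, and the topological statement goes through in the same pattern, using the space-level versions of the two facts (a map of spaces is a homotopy equivalence iff its mapping cone is contractible; the mapping cone of a map between contractible spaces is contractible).

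The real work here is bookkeeping rather than ideas: pinning down the signs in the iterated-cone isomorphism of the second step and keeping the signs in all the contracting homotopies mutually consistent. A more pedestrian alternative bypasses the cone-of-cones identity and writes down a homotopy inverse of $\phi$ directly, as $\psi(b',c')=(g'(b'),\ h'(c')+k(b'))$, where $g',h'$ are homotopy inverses of $g,h$ and $k\:f g'\simeq h'f'$ is a correction homotopy (it exists since $h'f'g=h'hf\simeq f\simeq fg'g$); but then verifying $\psi\phi\simeq\id$ and $\phi\psi\simeq\id$ forces one to produce off-diagonal homotopy terms on the cones whose existence is governed by exactly the same acyclicity input, so this route saves nothing.
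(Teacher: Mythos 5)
The paper states this proposition without proof, so there is nothing to compare against line by line; judged on its own terms, your chain-complex argument is correct and complete. All three ingredients check out: (i) a chain map is a homotopy equivalence iff its cone is contractible (the block-matrix extraction of a homotopy inverse from a contracting homotopy of the cone does work, and this is the direction you actually use); (ii) the two iterated cones on the commuting square agree up to permuting summands and a sign on the $B$-summand --- I verified that the identity on $C^{n+1}\oplus B'^{n+1}\oplus C'^n$ together with $-1$ on $B^{n+2}$ intertwines the two differentials; (iii) the cone of a map between contractible complexes is contractible, since $p\rho_X-\rho_Y p$ is a degree $-1$ cycle in $\Hom^\bullet(X,Y)$ and that complex is contracted by $\psi\mapsto(-1)^{|\psi|}\psi\rho_X$. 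This is a genuinely different route from the ``standard'' one the lectures are implicitly building toward (in Lecture 3 the same statement drops out of $\mathbf{TR3}$ plus the five lemma applied to $\Hom_{\mathcal{K}(\A)}(T,-)$ and Yoneda); your proof is more computational but has the virtue of being self-contained and of isolating exactly where the homotopy-invariance comes from, namely acyclicity of $\Hom^\bullet(X,Y)$ for contractible $X$.

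The one genuine flaw is the final sentence about topological spaces. The pattern does \emph{not} transpose: the unstable analogue of your fact (i) is false. A map of spaces with contractible mapping cone need not be a homotopy equivalence --- take $A\to *$ for an acyclic $2$-complex $A$ with nontrivial perfect fundamental group; its cone is $\Sigma A$, which is simply connected and acyclic, hence contractible, while $A\to *$ is not an equivalence. (This is precisely the unstable/stable divide: the ``iff'' holds for spectra, not for spaces.) The topological version of the proposition is nevertheless true, but it requires a different argument, e.g.\ the gluing lemma for pushouts along cofibrations, or the observation that the mapping cone is a homotopy pushout and homotopy pushouts are invariant under objectwise homotopy equivalences. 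So delete or replace that last parenthetical; the chain-complex proof itself stands.
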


\begin{rem}
  The above proposition is still true if we use {\em
  quasi-isomorphisms}  instead of chain homotopy equivalences.
  Similarly, the topological version remains valid if we use
  {\em weak equivalences}
  (i.e., maps that induce isomorphisms on all homotopy groups)
  instead of homotopy equivalences.
\end{rem}

\noindent{\em Exercise.} Give an example (in both the topological
and chain complex settings) to show that the above proposition is
not true if we use the strict cofiber $Y/f(X)$ instead of
$\Cone(f)$. (Therefore, since we are interested in chain  homotopy
equivalence classes, or quasi-isomorphism classes, of cochain
complexes, $\Cone(f)$ is a better-behaved notion than $Y/f(X)$.)

\vspace{0.1in}

\noindent{\em Exercise.}
  Formulate a universal property for the mapping cone construction.
\vspace{0.1in}

\section{Basic properties of cofiber sequences}

\noindent {\small References: \cite{Weibel}, \cite{GeMa},
\cite{KaSch}, \cite{Hartshorne2},  \cite{Verdier}.}

\vspace{0.1in}

\noindent The sequence
  \begin{center}
\fbox{
\begin{minipage}{1.4in}
            $B \llra{f} C \lra \Cone(f)$
\end{minipage}}
\end{center}
\vspace{0.1in}
 (or any sequence quasi-isomorphic to such a sequence)
is called a {\bf cofiber sequence}. The same definition can be made
with topological spaces. We list the basic properties of cofiber
sequences.

\separate

\noindent{\bf 1. Exact sequence, basic form.} An important property
of cofiber sequences is that they give rise to long exact sequences.
Here is a baby version which is very easy to prove and is left as an
exercise.

\begin{prop}{\label{P:exact}}
 Let $B \to C \to \Cone(f)$ be a cofiber sequence. Then the sequence
    $$ H^n(B) \to H^n(C) \to H^n(\Cone(f))$$
 is exact for every $n$.
\end{prop}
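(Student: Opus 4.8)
The plan is to prove exactness at $H^n(C)$ for the three-term sequence
$$ H^n(B) \overset{H^n(f)}{\lra} H^n(C) \overset{H^n(\pi)}{\lra} H^n(\Cone(f)),$$
where $\pi\colon C \to \Cone(f)$ is the canonical chain map $c \mapsto (0,c)$. First I would note that it suffices to treat the model sequence $B \llra{f} C \lra \Cone(f)$ itself: any quasi-isomorphic sequence induces the same maps on cohomology up to isomorphism, so exactness is preserved. Then the whole argument reduces to a diagram chase with explicit cocycle representatives, using the formula for the differential on $\Cone(f)$, namely $d(b,c) = \big(-d_B(b),\, f(b)+d_C(c)\big)$ on $\Cone(f)^n = B^{n+1}\oplus C^n$.

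The two inclusions are checked as follows. For the composite being zero: the chain-level composite $\pi \circ f$ is $b \mapsto (0, f(b))$; I would observe that this map is itself null-homotopic via the homotopy $s^n\colon B^n \to \Cone(f)^{n-1} = B^n \oplus C^{n-1}$, $s^n(b) = (b,0)$, since $d\,s^n(b) + s^{n+1}d_B(b) = (-d_B(b), f(b)) + (d_B(b), 0) = (0,f(b))$. Hence $H^n(\pi)\circ H^n(f) = 0$, so $\im H^n(f) \subseteq \ker H^n(\pi)$. (Alternatively one can just compute directly that the image of a cocycle $z \in Z^n(B)$ is $(0, f(z))$, which equals $d(-z,0)$ in $\Cone(f)$, hence a coboundary.)

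For the reverse inclusion $\ker H^n(\pi) \subseteq \im H^n(f)$: take a class in $\ker H^n(\pi)$ represented by a cocycle $z \in Z^n(C)$, so $d_C(z)=0$. Saying $[\pi(z)] = 0$ in $H^n(\Cone(f))$ means $(0,z) = d(b,c) = \big(-d_B(b), f(b)+d_C(c)\big)$ for some $(b,c) \in \Cone(f)^{n-1} = B^n \oplus C^{n-1}$. Reading off the two components gives $d_B(b) = 0$, so $b \in Z^n(B)$, and $z = f(b) + d_C(c)$, so $[z] = [f(b)] = H^n(f)([b])$ in $H^n(C)$. Therefore $[z] \in \im H^n(f)$, which completes the proof.

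The argument is entirely routine; the only point requiring care — and the one I'd flag as the ``main obstacle,'' though it is minor — is bookkeeping the signs and the index shift in the differential of $\Cone(f)$ correctly, so that the two components of the equation $(0,z) = d(b,c)$ are extracted without error. Once the differential formula from the table is taken at face value, everything falls out immediately, which is why this is stated as an easy exercise in the text.
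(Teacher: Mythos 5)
Your proof is correct, but it takes a genuinely different route from the one the notes intend. The text (see Remark 4.2) proves this by showing that the cofiber sequence $B \to C \to \Cone(f)$ is chain homotopy equivalent to the short exact sequence of complexes $0 \to B \to \Cyl(f) \to \Cone(f) \to 0$, via the explicit chain equivalence $C \sim \Cyl(f)$ given there, and then invoking the long exact cohomology sequence of a short exact sequence; that route buys the entire long exact sequence at once and reuses standard machinery. You instead verify exactness at $H^n(C)$ by a direct computation with cocycle representatives and the explicit differential of the cone; this is more elementary and self-contained (no appeal to the snake lemma), and your observation that $\pi\circ f$ is null-homotopic via $s^n(b)=(b,0)$ is a pleasant strengthening of ``the composite is zero on cohomology.'' Two minor caveats. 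First, in your parenthetical alternative the sign is off: with the conventions of the table, $d(z,0)=(-d_B(z),f(z))=(0,f(z))$ for a cocycle $z$, whereas $d(-z,0)=(0,-f(z))$; the conclusion that $(0,f(z))$ is a coboundary is of course unaffected. Second, your element chase is literally valid in $R$-$\mathbf{Mod}$; for a general abelian category you should either invoke the Freyd--Mitchell embedding (stated in Lecture 1) or rephrase with generalized elements --- though the same caveat applies to the notes' own route, since the long exact sequence of a short exact sequence is usually proved the same way.
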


\begin{rem}{\label{R:cofiber}}
  This is of course part of a long exact sequence
   {\footnotesize
  $$\cdots \overset{\partial}{\to} H^n(B) \to H^n(C) \to H^n(\Cone(f))
          \overset{\partial}{\to} H^{n+1}(B) \to H^{n+1}(C) \to H^{n+1}(\Cone(f))
                                            \overset{\partial}{\to}
                                            \cdots$$}
  that we will discuss shortly.
  If we use the fact that a short exact sequence of cochain
  complexes gives rise to a long exact sequence of cohomology
  groups, this is easily proven by showing that the cofiber sequence
  $B \to C \to \Cone(f)$ is chain homotopy equivalent to the short
  exact sequence $B \to \Cyl(f) \to \Cone(f)$. The equivalence
  $C\sim\Cyl(f)$ is given by
 \vspace{-0.1in}
\begin{center}
   \begin{tabular}{ccc} $C$ & $\llra{}$ & $\Cyl(f)$
     \\ $c$ & $\mapsto$ &  $(0,0,c)$
   \end{tabular} \ \ \ \ and  \ \ \ \
   \begin{tabular}{ccc} \\  $\Cyl(f)$ & $\llra{}$ & $C$ \\
               $B^n\oplus B^{n+1}\oplus C^n$ & $\lra$ & $C^n$
                     \\ $(b',b,c)$ & $\mapsto$ & $f(b')+c$.
        \end{tabular}
\end{center}
\end{rem}

{\small
\begin{rem}
   We have a similar statement
   for a cofiber sequence $X \to Y \to \Cone(f)$ of topological
   spaces. The corresponding long exact sequence in (co)homology should then
   be interpreted as the long exact sequence of {\em relative}
   (co)homology groups. More specifically,
   if $f$ is an inclusion, we have
     $$H_*(\Cone(f))\cong H_*(Y,X) \ \ \text{and} \ \ H^*(\Cone(f))\cong H^*(Y,X),$$
   where the right hand sides are relative (co)homology groups.
\end{rem}
}

\separate

\noindent {\bf 2. A short exact sequence is a cofiber sequence.} We
pointed out in Remark \ref{R:cofiber} that a cofiber sequence is
chain equivalent (hence quasi-isomorphic) to a pointwise split short
exact sequence. The converse is also true.

\begin{prop}
Consider a short exact sequence of cochain complexes
  $$0 \to B \torel{f} C \to C/B \to 0.$$
Then the natural chain map $\varphi \: \Cone(f) \to C/B$ defined by
  \begin{center}
       \begin{tabular}{ccc}  $B^{n+1}\oplus C^n$ & $\to$ & $C^n/B^n$ \\
                    $(b,c)$ & $\mapsto$ & $\bar{c}$
       \end{tabular}
  \end{center}
 is a quasi-isomorphism. 
 If the sequence is pointwise split,
  with $s \: C^n/B^n \to C^n$ a choice of splitting,
  then $\varphi$ is a  chain equivalence. The inverse is given
  by $\psi \: C/B \to \Cone(f)$
     \begin{center}
      \begin{tabular}{ccc}  $C^n/B^n$ & $\to$ & $B^{n+1}\oplus C^n$ \\
                    $x$ & $\mapsto$ & $\big(sd(x)-ds(x),s(x)\big).$
       \end{tabular}
     \end{center}
\end{prop}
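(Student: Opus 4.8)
The plan is to establish the two assertions separately: the quasi-isomorphism statement for an arbitrary short exact sequence, and then the chain-equivalence statement under the splitting hypothesis.

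\emph{$\varphi$ is a quasi-isomorphism.} First one checks that $\varphi$ is a well-defined chain map. On $\Cone(f)^n=B^{n+1}\oplus C^n$ the differential sends $(b,c)$ to $(-d_B(b),f(b)+d_C(c))$, and since $f(b)\in B^{n+1}$ maps to $0$ in $C^{n+1}/B^{n+1}$, passing to the quotient in the second coordinate commutes with the differentials. Now $\varphi$ is clearly surjective in each degree, so it is enough to identify its kernel. Writing $\pi\colon C\to C/B$ for the quotient chain map, we have $(b,c)\in\ker\varphi^n$ exactly when $c\in\ker\pi^n=\im f^n$, i.e.\ $c=f(b')$ for a unique $b'\in B^n$; hence $\ker\varphi^n\cong B^{n+1}\oplus B^n$ via $(b,f(b'))\leftrightarrow(b,b')$, and using $f d_B=d_C f$ one sees that under this identification the differential becomes $(b,b')\mapsto(-d_B(b),\,b+d_B(b'))$. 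In other words $\ker\varphi$ is precisely $\Cone(\id_B)$, the cone of the identity of $B$. The cone of an identity map is contractible --- the formula $h(b,b')=(b',0)$ gives $dh+hd=\id$ --- hence acyclic. Plugging the short exact sequence of complexes $0\to\Cone(\id_B)\to\Cone(f)\llra{\varphi}C/B\to 0$ into the long exact cohomology sequence then forces $H^n(\varphi)$ to be an isomorphism for all $n$. (One could instead compare the long exact sequence of the cofiber sequence $B\to C\to\Cone(f)$ of Remark \ref{R:cofiber} with that of $0\to B\to C\to C/B\to 0$ via the five lemma, but this requires chasing the connecting maps and an unavoidable sign; the kernel argument sidesteps both.)

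\emph{The split case.} Assume $s\colon C^n/B^n\to C^n$ with $\pi s=\id$. One first checks $\psi$ is well defined: $sd(x)-ds(x)\in C^{n+1}$ maps to $d(x)-d(x)=0$ in $C^{n+1}/B^{n+1}$ (using $\pi s=\id$ and that $\pi$ is a chain map), so it lies in $\im f^{n+1}$ and may be regarded as an element of $B^{n+1}$. A short computation with $d^2=0$ and $f d_B=d_C f$ then shows $\psi$ is a chain map, and $\varphi\psi(x)=\pi s(x)=x$, so $\varphi\psi=\id_{C/B}$ on the nose. It remains to exhibit a chain homotopy $\psi\varphi\simeq\id_{\Cone(f)}$.

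\emph{The homotopy.} Let $\rho\colon C\to B$ be the degreewise (not chain) map $\rho=f^{-1}\circ(\id_C-s\pi)$, so that $f\rho=\id_C-s\pi$ and $\rho f=\id_B$; this is the retraction attached to the splitting. The claim is that $h\colon\Cone(f)^n\to\Cone(f)^{n-1}$ given by $h(b,c)=(\rho(c),0)$ satisfies $dh+hd=\id-\psi\varphi$. Computing both sides, the second coordinates agree at once (both equal $(\id_C-s\pi)(c)$), and the equality of first coordinates amounts to the relation $\rho d_C-d_B\rho=-f^{-1}\circ(s\pi d_C-d_C s\pi)$, which follows directly from the definition of $\rho$ together with $\rho f=\id_B$ and $d_B f^{-1}=f^{-1}d_C$ on $\im f$. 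This single identity is the only genuinely computational step; the rest is bookkeeping.

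\emph{Main obstacle.} There is no real conceptual difficulty here: each part is a matching of the cone differential against the quotient differential. The only thing that can cause trouble is sign discipline --- in the cone differential, in the connecting homomorphisms, and in the homotopy formula --- so the prudent course is to use the kernel argument for the first claim (which avoids connecting maps entirely) and to verify the one homotopy identity for the second by a careful direct computation.
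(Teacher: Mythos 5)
Your proof is correct, and all three computations check out: the kernel of $\varphi$ is indeed $B^{n+1}\oplus f(B^n)$ with induced differential $(b,b')\mapsto(-d_Bb,\,b+d_Bb')$, which is exactly $\Cone(\id_B)$ and is contracted by $h(b,b')=(b',0)$ (one verifies $dh+hd=\id$ with the paper's sign conventions); and in the split case the identity $f(\rho d_C-d_B\rho)=-(s\pi d_C-d_Cs\pi)$ does follow from $f\rho=\id_C-s\pi$ and $\rho f=\id_B$, so your homotopy $h(b,c)=(\rho(c),0)$ gives $dh+hd=\id-\psi\varphi$ as claimed. The paper states this proposition without proof, so there is no argument to compare against; your route for the first part (exhibiting $\ker\varphi$ as a contractible cone and invoking the long exact sequence of $0\to\ker\varphi\to\Cone(f)\to C/B\to 0$) is a clean alternative to the more common five-lemma comparison of the two long exact sequences, and it has the advantage you note of avoiding any sign analysis of connecting homomorphisms. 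The only stylistic caveat is that your retraction $\rho=f^{-1}(\id_C-s\pi)$ silently uses injectivity of $f$ and exactness at $C$ to make sense of $f^{-1}$ on $\ker\pi$; you do justify this, but it is worth stating once that you are identifying $B^n$ with $f(B^n)=\ker\pi^n$ throughout, since both the well-definedness of $\psi$ and the homotopy computation depend on that identification.
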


\noindent {\em Exercise.} Let $f \: B \hookrightarrow C$ be an
inclusion of $R$-modules, viewed as cochain complexes concentrated
in degree 0. Show that $\varphi \: \Cone(f) \to C/B$ defined above
is a chain equivalence if and only if $f$ is split.

\separate

\noindent {\bf 3. Iterate of a cofiber sequence.}
  Consider a cofiber sequence
     $$B \llra{f} C \llra{g} \Cone(f).$$
   Observe that $g \: C \to \Cone(f)$ is a pointwise split inclusion.
   This implies that the mapping cone of $g$
   is naturally chain equivalent to $\Cone(f)/C=B[1]$. More
   precisely, we have the following commutative diagram:
    $$\xymatrix@=14pt@M=6pt@C=14pt{B \ar[r]^f   &
                               C \ar@{^(->}[r]^(0.35)g  \ar@{=}[d]
                            & \Cone(f) \ar@{=}[d] \ar[r]^{\partial} &  B[1]\\
      &   C \ar@{^(->}[r]^(0.35)g   & \Cone(f) \ar@{^(->}[r]^{h} \ar[r]
                 & \Cone(g). \ar[u]_(0.48){\text{ch. eq}}^(0.46){\varphi}  }$$
   Note that we have natural chain equivalences
   $$\Cone(\partial)\sim \Cone(h) \sim \coker(h)=C[1].$$
   Indeed, a careful chasing through the above chain equivalences (for which
   we have given explicit formulas) shows that
   the sequence
      $$\Cone(f) \llra{\partial} B[1] \llra{-f[1]} C[1]$$
   is a cofiber sequence. We can now iterate this process forever
   and produce a long sequence
\vspace{0.1in}
  \begin{center}
\fbox{
\begin{minipage}{4.5in}
             $\cdots \llra{\partial[-1]} B \llra{f} C \llra{g} \Cone(f)
          \llra{\partial} B[1] \llra{-f[1]} C[1] \llra{-g[1]} \Cone(f)[1]
                                            \llra{-\partial[1]}
                                            \cdots$
\end{minipage}} $(\bigstar)$
\end{center}
\vspace{0.1in} in which every three consecutive terms form a cofiber
sequence. \label{sequence}

{\small
\begin{rem}
  A similar long exact sequence can be constructed for a map $f \: X
  \to Y$ of topological spaces, and it is called a {\em Puppe sequence}
  (or {\em cofiber sequence} \cite{May1} $\S$8.4).
  A Puppe sequence,
  however, only extends to the right. The reason for this is that
  the suspension functor on the category of topological spaces is not
  invertible (as opposed to the shift functor for cochain
  complexes).
  The Puppe sequence can be used, among other things, to
  give a natural construction of the  long exact (co)homology sequence of a pair.
\end{rem}
}

\separate

\noindent {\bf 4. Long exact sequence of a cofiber sequence.}
  Applying Proposition \ref{P:exact} to the above long exact
  sequence of chain complexes, and observing that $H^i(B[n])=H^{i+n}(B)$,
  we obtain a long exact sequence
  of cohomology groups

  {\footnotesize
    $$\cdots \overset{\partial}{\to} H^n(B) \to H^n(C) \to H^n(\Cone(f))
          \overset{\partial}{\to} H^{n+1}(B) \to H^{n+1}(C) \to H^{n+1}(\Cone(f))
                                            \overset{\partial}{\to}
                                            \cdots.$$}
\separate

\noindent {\bf Moral.}
   By exploiting the notion of cone of a map in a systematic way, we can
   elevate many basic cohomological constructions to the level
   of chain complexes. For this to work conveniently, one needs to
   {\em invert} chain equivalences  so that
   one can treat such morphisms as isomorphisms. Indeed, for various reasons,
   inverting
   only chain equivalences is usually not enough.
   For example, recall that one of our motivations for working with chain
   complexes was that we wanted to be able to replace an
   object $M \in \A$ with a better behaved resolution (projective
   or injective) of it. Since the resolution
   map is only a quasi-isomorphism and not a chain equivalence in
   general,   it is only after inverting quasi-isomorphisms that
    we can regard an $M \in \A$ and its resolution as the ``same''.

\section{Derived categories}

\noindent {\small References: \cite{Weibel}, \cite{GeMa},
\cite{Hartshorne2},  \cite{KaSch}, \cite{Iversen}, \cite{Verdier},
\cite{Keller1}, \cite{Keller2}.}

\vspace{0.1in}

\noindent Let $\A$ be an abelian category.

\vspace{0.1in}

\noindent The {\bf homotopy category} $\mathcal{K}(\A)$ is the
category obtained by inverting all chain equivalences in the
category $\Ch(\A)$ of chain complexes. More precisely, there is a
natural functor $\Ch(\A) \to \mathcal{K}(\A)$ which has the
following universal property:

\vspace{0.1in}

 \fbox{
 \begin{tabular}{cr}
  \begin{tabular}{l} \\
 If a functor $F \: \Ch(\A) \to \C$  sends chain \\ equivalences to
 isomorphisms, then
  \end{tabular} &
     $\xymatrix@=16pt@M=8pt@C=28pt{ \Ch(\A) \ar[r]^F \ar[d] & \C \\
          \mathcal{K}(\A)    \ar@{..>}_{\exists !}[ur] & }$
  \end{tabular}
      }

\vspace{0.1in}

\noindent  If in the above definition we replace `chain equivalence'
by `quasi-isomorphism', we arrive at the definition of the
 {\bf derived category} $\mathcal{D}(\A)$. We list the
basic properties of $\mathcal{K}(\A)$ and $\mathcal{D}(\A)$.

\separate

\noindent{\bf 1. Explicit construction of $\mathcal{K}(\A)$.} We
can, alternatively,
  define the homotopy category $\mathcal{K}(\A)$ to be the category
  whose objects are the ones of $\Ch(\A)$ and whose morphisms are
  defined by

\begin{center}
  \fbox{
    $\Hom_{\mathcal{K}(\A)}(B,C):=\Hom_{\Ch(\A)}(B,C)/N$
   }\end{center}
\vspace{0.1in}
 where $N$ stands for the group of null homotopic maps. ({\em Exercise.} Show
 that this category satisfies the required
 universal property.)
 This, in particular, implies that $\mathcal{K}(\A)$ is an additive
 category.

\separate

\noindent {\bf 2. Cohomology functors}.
  The cohomology functors $H^n \: \Ch(\A) \to \A$ factor
  through $\mathcal{K}(\A)$ and $\mathcal{D}(\A)$:
     $$\xymatrix@=16pt@M=8pt@C=28pt{ \Ch(\A) \ar[rrd]_{H^n} \ar[r]&
         \mathcal{K}(\A) \ar[rd]^{H^n} \ar[r] & \mathcal{D}(\A) \ar[d]^{H^n} \\
         &  & \A }.$$

\separate

\noindent {\bf 3. Explicit construction of $\mathcal{D}(\A)$.}
   The objects of $\mathcal{D}(\A)$ can again be taken to be
   the ones of $\Ch(\A)$.
   Description of morphisms in $\mathcal{D}(\A)$ is, however, slightly
   more
   involved than the case of $\mathcal{K}(\A)$. It requires the notion of
   {\em calculus of fractions} in
   a category, which we will not get into for the sake of brevity.

     Just to give an idea how this works, first we observe that
   $\mathcal{D}(\A)$
   can, equivalently, be defined by inverting quasi-isomorphisms in
   $\mathcal{K}(\A)$. The class of quasi-isomorphisms in $\mathcal{K}(\A)$
   satisfies the axioms of the calculus of fractions (\cite{GeMa}, Definition III.2.6),
   and this allows one to give
   an explicit description of morphisms in $\mathcal{D}(\A)$
   (see \cite{GeMa}, Section III.2, \cite{Hartshorne2}, Section I.3, or
   \cite{Weibel}, Section 10.3). A consequence of this is that,
   for every morphism $f \: B \to C$ in
   $\mathcal{D}(\A)$, one can find morphisms $s$, $t$, $g$ and $h$
   in $\mathcal{K}(\A)$ such that $s$ and $t$ are quasi-isomorphisms
   and the following diagrams commute in
   $\mathcal{D}(\A)$:
      $$\xymatrix@=16pt@M=8pt@C=28pt{ A \ar[d]_s \ar[rd]^g &  & &
      A'
      \\
        B \ar[r]_f& C  & B \ar[ru]^h \ar[r]_f & C \ar[u]_t}$$
Also,  for any two morphisms $f,f' \: B \to C$, now in
$\mathcal{K}(\A)$, which become equal in $\mathcal{D}(\A)$, there
are $s$ and $t$ as above such that $f\circ s=f'\circ s$ and $t\circ
f=t\circ f'$.

\separate

\noindent {\bf 4. Cofiber sequences.}
   There is a notion of cofiber sequence in both $\mathcal{K}(\A)$
   and $\mathcal{D}(\A)$. Every morphism $f \: B \to C$ fits in
   a sequence $\bigstar$ as in page \pageref{sequence} in which every three
   consecutive terms form a cofiber sequence.

\section{Variations on the theme of derived categories}

\noindent {\small References: \cite{Weibel}, \cite{GeMa},
\cite{Hartshorne2},  \cite{KaSch}, \cite{Iversen}, \cite{Keller1},
\cite{Keller2}.}

\vspace{0.1in}

To an abelian category $\A$ we can associate various types  of
homotopy or derived categories by imposing certain boundedness
conditions on the chain complexes in question.

\separate

\noindent The {\bf bounded below} derived category
$\mathcal{D}^+(\A)$ is the category obtained from  inverting the
quasi-isomorphisms in the category of bounded below chain complexes
$\Ch^+(\A)$. Equivalently, $\mathcal{D}^+(\A)$ is obtained from
inverting the quasi-isomorphisms in the homotopy category
$\mathcal{K}^+(\A)$ of $\Ch^+(\A)$.

\vspace{0.1in}

\noindent In the same way one  can define {\bf bounded} derived
categories $\mathcal{D}^-(\A)$
 and $\mathcal{D}^b(\A)$, where $b$ stands for bounded on two sides.

\begin{rem}
  One can use the calculus of fractions in $\mathcal{K}^+(\A)$
  (respectively, $\mathcal{K}^-(\A)$, $\mathcal{K}^b(\A)$)
  to give a description of morphisms in $\mathcal{D}^+(\A)$
  (respectively, $\mathcal{D}^-(\A)$, $\mathcal{D}^b(\A)$).
  It follows that each of these bounded derived categories can be
  identified with a full subcategory of  $\mathcal{D}(\A)$.
  In particular, $\mathcal{D}^b(\A)=\mathcal{D}^+(\A)\cap \mathcal{D}^-(\A)$.
  We will say more on this in the next lecture.
\end{rem}

\separate

There is an alternative way of computing morphisms in {\em bounded}
derived categories using injective or projective resolutions. It is
summarized in the following theorems.

\vspace{0.1in}

\begin{thm}[\oldcite{GeMa},  $\S$ III.5.21, \oldcite{Weibel},
Theorem 10.4.8]{\label{T:derive}}
  Let $\mathcal{I}^+(\A) \subset \mathcal{K}^+(\A)$ be the full
  subcategory consisting of complexes whose terms are all injective.
  Then the composition
     $$\xymatrix@=16pt@M=8pt@C=28pt{ \mathcal{I}^+(\A) \ar@{^(->}[r]
       \ar@/_1pc/[rr] &
       \mathcal{K}^+(\A) \ar[r]  & \mathcal{D}^+(\A)}$$
  is fully faithful. It is an equivalence if $\A$ has enough injectives.
  The same thing is true if we replace
  $+$ by $-$ and injective by projective:
     $$\xymatrix@=16pt@M=8pt@C=28pt{ \mathcal{P}^-(\A) \ar@{^(->}[r]
       \ar@/_1pc/[rr] &
       \mathcal{K}^-(\A) \ar[r]  & \mathcal{D}^-(\A)}.$$
\end{thm}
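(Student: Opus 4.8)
The plan is to prove the statement about the embedding $\mathcal{I}^+(\A)\hookrightarrow\mathcal{D}^+(\A)$, the case of $\mathcal{P}^-(\A)$ being dual (replace complexes by their opposites, injectives by projectives, and reverse all arrows). Fully faithfulness splits into two independent assertions: (a) the functor $\mathcal{K}^+(\A)\to\mathcal{D}^+(\A)$ restricted to $\mathcal{I}^+(\A)$ is \emph{faithful}, and (b) it is \emph{full}. Essentially everything rests on the following key lemma, which I would isolate and prove first:

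\medskip
\noindent\emph{Key Lemma.} If $I$ is a bounded-below complex of injectives and $s\colon B\to I$ is a quasi-isomorphism in $\Ch^+(\A)$, then $s$ admits a left inverse up to homotopy, i.e.\ there is $r\colon I\to B$ with $r\circ s$ homotopic to $\id_B$ — and more usefully, for any chain map $g\colon B\to I$ and any quasi-isomorphism $s\colon B\to B'$ in $\Ch^+(\A)$, there exists $g'\colon B'\to I$ with $g'\circ s$ homotopic to $g$, unique up to homotopy.

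\medskip
To establish the Key Lemma I would use the standard dual-horseshoe / obstruction argument: since $s$ is a quasi-isomorphism its mapping cone $\Cone(s)$ is acyclic (by Proposition~\ref{P:exact} together with the long exact sequence of the cofiber sequence from Section~4.4, an acyclic cone is exactly the condition), and $\Cone(s)$ is again bounded below. It then suffices to show that any chain map from an acyclic bounded-below complex $Z$ to a bounded-below complex of injectives $I$ is null-homotopic; one constructs the homotopy $s^n\colon Z^n\to I^{n-1}$ by induction on $n$ starting from the bottom degree, at each stage solving a lifting problem that is exactly the extension property defining an injective object, using acyclicity of $Z$ to guarantee the relevant map is a monomorphism onto which one extends. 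Given this, faithfulness (a) follows: if $f\colon I\to I'$ becomes zero in $\mathcal{D}^+(\A)$ then by the calculus of fractions (Section~5.3) there is a quasi-isomorphism $s$ with $f\circ s$ homotopic to $0$, and by the Key Lemma $s$ is invertible up to homotopy in $\mathcal{K}^+(\A)$, so $f$ is homotopic to $0$. Fullness (b) also follows: a morphism $I\to I'$ in $\mathcal{D}^+(\A)$ is represented by a roof $I \xleftarrow{s} B \xrightarrow{g} I'$ with $s$ a quasi-isomorphism; the Key Lemma produces $r\colon I\to B$ with $s\circ r$ homotopic to $\id$, and then $g\circ r\colon I\to I'$ is a genuine chain map inducing the given derived morphism.

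\medskip
For the final clause — that this fully faithful functor is an \emph{equivalence} when $\A$ has enough injectives — I would only need essential surjectivity. Here I would invoke the standard existence of injective resolutions in a category with enough injectives: every object has an injective resolution, and this extends (via the dual horseshoe lemma, or the same inductive lifting as above) to a functorial-up-to-homotopy assignment sending any bounded-below complex $B$ to a bounded-below complex of injectives $I$ together with a quasi-isomorphism $B\to I$. Constructing $I\to B$... rather, $B\to I$: one builds $I$ by a standard double-complex / Cartan–Eilenberg resolution argument, taking injective resolutions of each $B^n$ (or of the cohomology and boundaries) and assembling the total complex, which is bounded below because $B$ is; the resulting map $B\to I$ is a quasi-isomorphism by a spectral sequence or filtration argument. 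This shows every object of $\mathcal{D}^+(\A)$ is isomorphic to one coming from $\mathcal{I}^+(\A)$.

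\medskip
The main obstacle is the Key Lemma, specifically the inductive construction of the null-homotopy for a map out of an acyclic complex into a complex of injectives: one must carefully set up the induction so that at each degree the partially-defined homotopy fits into a genuine lifting diagram with a monomorphism on the bottom, and the boundedness-below hypothesis is what makes the induction start. The Cartan–Eilenberg resolution needed for essential surjectivity is also technically fiddly (keeping track of which maps are quasi-isomorphisms), but it is entirely standard and I would cite \cite{Weibel} or \cite{GeMa} rather than reproduce it; the honest mathematical content is all in the injectivity lifting argument.
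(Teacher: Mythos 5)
The paper offers no proof of this theorem---it defers entirely to [GeMa, \S III.5.21] and [Weibel, Thm.~10.4.8]---so there is nothing in-text to compare against. Your skeleton is the standard argument from those sources: reduce everything to the fact that a chain map from an acyclic bounded-below complex into a bounded-below complex of injectives is null-homotopic, prove that by the degreewise induction you describe (factor $f^n-d\circ s^n$ through $Z^n/\ker d^n\hookrightarrow Z^{n+1}$ using acyclicity, then extend by injectivity of $I^n$), deduce full faithfulness via the calculus of fractions, and get essential surjectivity from Cartan--Eilenberg resolutions. That core is correct.

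There is, however, a genuine error in how you state and then apply the Key Lemma. Its first clause---that a quasi-isomorphism $s\colon B\to I$ into a bounded-below complex of injectives admits $r\colon I\to B$ with $r\circ s\simeq\id_B$---is false for arbitrary $B$, as is the right-inverse variant you invoke for fullness. Counterexample: $\A=\mathbf{Ab}$, $B=\bbZ$ in degree $0$, $I$ the complex $\mathbb{Q}\to\mathbb{Q}/\bbZ$ in degrees $0,1$, and $s$ the inclusion $\bbZ\hookrightarrow\mathbb{Q}$. Every chain map $I\to B$ vanishes because $\Hom(\mathbb{Q},\bbZ)=0$, so $s$ has no homotopy inverse on either side; a quasi-isomorphism into a complex of injectives is a homotopy equivalence only when its source is \emph{also} a bounded-below complex of injectives, which is a consequence of the theorem rather than an input to it. Hence the steps ``$s$ is invertible up to homotopy, so $f$ is homotopic to $0$'' and ``the Key Lemma produces $r\colon I\to B$ with $s\circ r\simeq\id$'' fail as written. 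The repair is already contained in the second, correct clause of your lemma: for a quasi-isomorphism $s\colon B\to B'$ in $\Ch^+(\A)$, precomposition $s^*\colon\Hom_{\mathcal{K}^+(\A)}(B',I)\to\Hom_{\mathcal{K}^+(\A)}(B,I)$ is bijective (apply the cohomological functor $\Hom_{\mathcal{K}}(-,I)$ to the triangle on $\Cone(s)$, which is acyclic and bounded below). Faithfulness then reads: $f\circ s\simeq 0$ and injectivity of $s^*$ give $f\simeq 0$. Fullness: given a roof $I\xleftarrow{s}B\xrightarrow{g}I'$, surjectivity of $s^*$ with target $I'$ yields $g'\colon I\to I'$ with $g'\circ s\simeq g$, so $g'$ represents the given morphism; no homotopy inverse of $s$ is ever needed. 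With that rerouting your proof is complete and agrees with the cited references.
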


 The (first part of the) above theorem says that, if $I$ and
$J$ are bounded below complexes of injective objects, then we have
\vspace{0.1in}
     \begin{center}  \fbox{
  $\Hom_{\mathcal{D}(\A)}(J,I)=\Hom_{\mathcal{K}(\A)}(J,I).$}\end{center}
\vspace{0.1in} ({\em This is actually true if $J$ is an arbitrary
bounded below complex}; see \cite{Weibel}, Corollary 10.4.7). In
particular,  if $I$ and $J$ are quasi-isomorphic, then they are
chain equivalent. This is good news, because we have seen that
computing $\Hom$ is much easier in $\mathcal{K}(\A)$. Now if $\A$
has enough injectives, it can be shown that for every bounded $C$,
there exists a bounded below complex of injectives $I$, and a
quasi-isomorphism $C \risom I$. This is called an {\em injective
resolution} for $C$. So, by virtue of the above fact, injective
resolutions can be used to compute morphisms in derived categories:
\vspace{0.1in}
 \begin{center}  \fbox{
  $\Hom_{\mathcal{D}(\A)}(B,C)=\Hom_{\mathcal{K}(\A)}(B,I).$}
 \end{center}

\vspace{0.1in}

\noindent{\em Exercise.} Set
 $\Hom^i_{\mathcal{D}(\A)}(B,C):=\Hom_{}(B,C[i])$. Let $B$ and $C$
 be objects in $\A$, viewed as complexes concentrated in degree 0.
 Assume $\A$ has enough injectives. Show that
\vspace{0.1in}
  \begin{center}  \fbox{
     $\Hom^i_{\mathcal{D}(\A)}(B,C)\cong\Ext^i(B,C).$}
  \end{center}
\vspace{0.1in}
 Use the second part of the theorem to give a way of computing
 $\Ext$ groups using a projective resolution for $C$, if they exist. Compute
 $\Hom^i_{\mathcal{K}(\A)}(B,C)$ and compare it with
 $\Hom^i_{\mathcal{D}(\A)}(B,C)$.

\vspace{0.1in}

\noindent{\em Exercise.} Give an example of a non-zero morphism in
$\mathcal{D}(\A)$, with $\A$ your favorite abelian category, which
induces zero maps on all cohomologies.

\section{Derived functors}

\noindent {\small References: \cite{Weibel}, \cite{GeMa},
\cite{Hartshorne2},  \cite{KaSch}, \cite{Iversen}, \cite{Keller1},
\cite{Keller2}.}

\vspace{0.1in}

To keep the lecture short, we will skip the very important topic of
derived functors and confine ourselves to an example: the {\em
derived tensor} $\overset{L}{\otimes}$. The reader is encouraged to
consult the given references for the general discussion of derived
functors, especially the all important {\em derived hom}
$\operatorname{RHom}$.

\separate

Let $\A=R$-$\mathbf{Mod}$. The idea is that we want to have a notion
of tensor product for chain complexes which is well defined on
$\mathcal{D}(A)$.

\vspace{0.1in}

 \noindent {\em The usual tensor product of chain complexes}
(Definition \cite{Weibel}, $\S$2.7.1) {\em does not pass to derived
categories.} This is because tensor product is not exact. More
precisely, if $A \to A'$ is a quasi-isomorphism, then $A\otimes B
\to A'\otimes B$ may no longer be. (For a counterexample, let $A$ be
a short exact sequence, $A'=0$, and $B$ a complex concentrated in
degree 0.)

\vspace{0.1in}

\noindent {\em The usual tensor product of chain complexes DOES pass
to homotopy categories.} This is because a null homotopy $s$ for a
chain map $A \to A'$ gives rise to a null homotopy $s\otimes B$ for
$A\otimes B \to A'\otimes B$ (exercise).

\vspace{0.1in}

In particular, taking all complexes to be (bounded-above) complexes
of projective modules, we get a well-defined tensor product
$-\otimes-\: \mathcal{P}^-(\A)\times\mathcal{P}^-(\A) \to
\mathcal{P}^-(\A)$. Since $R$-$\mathbf{Mod}$ has enough projectives,
we obtain, via the equivalence $\mathcal{P}^-(\A)\cong
\mathcal{D}^-(\A)$ of Theorem \ref{T:derive}, the desired tensor
product on the bounded above derived category: 
\begin{center}\fbox{
 $-\overset{L}{\otimes}-\:\mathcal{D}^-(\A)\times\mathcal{D}^-(\A)
                \to \mathcal{D}^-(\A).$}\end{center}
 \noindent More explicitly, $A\overset{L}{\otimes}B$
is defined to be $P\otimes Q$, where $P\to A$ and $Q\to B$ are
certain chosen projective resolutions. (In fact, it is enough to
resolve only one of $A$ or $B$.)

\vspace{0.1in}

\noindent {\em Exercise.} Show that if $M, N \in \A$ are viewed as
complexes concentrated in degree $0$, then
\begin{center}\fbox{
$H^{-i}(M\overset{L}{\otimes}N)\cong
\operatorname{Tor}_{i}(M,N).$}\end{center}

\newpage
{\LARGE\part*{Lecture 3: Triangulated categories}}

\vspace{0.3in}

\noindent{\bf Overview.} The main properties of the cone
construction for chain complexes can be formalized into a set of
axioms. This leads to the notion of a {\em triangulated} category,
the main topic of this lecture. A triangulated category is an
additive category in which there is an abstract notion of mapping
cone.  The cohomology functors on chain complexes can also be
studied at an abstract level in a triangulated category.

The main example of a triangulated category is the derived category
$\D(\A)$ of chain complexes in an abelian category $\A$. Using a
{\em $t$-structure} on a triangulated category, we can produce an
abelian category, called the {\em heart} of the $t$-structure. For
example, there is a standard $t$-structure on $\D(\A)$ whose heart
is $\A$. The $t$-structure is not unique, and by varying it we can
produce new abelian categories. We show how using a {\em torsion
theory} on an abelian category $\A$ we can produce a new
$t$-structure on $\D(\A)$. The heart of this $t$-structure is then a
new abelian category $\B$. For example, this method has been used in
noncommutative geometry to ``deform'' the abelian category $\A$ of
coherent sheaves on a torus $X$. The new abelian category $\B$ can
then be thought of as the category of coherent sheaves on a
``noncommutative deformation'' of $X$, a noncommutative torus.

\setcounter{section}{0}

\vspace{0.3in}

\section{Triangulated categories}

\noindent {\small References: \cite{Weibel}, \cite{GeMa},
\cite{Hartshorne2}, \cite{Neeman}, \cite{BeBeDe}, \cite{KaSch},
\cite{Iversen}, \cite{Verdier}, \cite{Keller1}, \oldcite{Keller2}.}

\vspace{0.1in}

We formalize the main properties of the mapping cone construction
and define triangulated categories.

\separate

\noindent Let $\T$ be an additive category  equipped with an
auto-equivalence $X \mapsto X[1]$  called {\bf shift} (or {\bf
translation}).

\vspace{0.1in}

 \noindent By a
{\bf triangle} in $\T$ we mean a sequence $X \to Y \to Z \to X[1]$.
We sometimes write this as
$$\xymatrix@R=8pt@C=2pt@M=5pt{ & Y \ar[rd]^g & \\
  X \ar[ru]^f & & Z \ar[ll]^{+1}}$$

\noindent  A {\bf triangulation} on $\T$ is a collection of
triangles, called {\bf exact} (or {\bf distinguished}) triangles,
satisfying the following axioms:

\vspace{0.1in}
  \begin{itemize}
     \item[$\mathbf{TR1.}$]
              \begin{itemize}
                 \item[a)]  $X \llra{\id} X \lra 0 \lra
                    X [1]$ is an exact triangle.
                 \item[b)] Any morphism $f \: X \to Y$ is part of an
                     exact triangle.
                 \item[c)] Any triangle isomorphic to an exact
                     triangle is exact.
              \end{itemize}

        \vspace{0.1in}

     \item[$\mathbf{TR2.}$] \  $X \torel{f} Y \torel{g} Z \torel{h}
         X[1] \ \ \text{exact}  \ \ \Leftrightarrow \ \ Y \torel{g}
                         Z \torel{h} X[1] \torel{-f[1]} Y[1] \ \
         \text{exact}$.

        \vspace{0.1in}

     \item[$\mathbf{TR3.}$] \ In the diagram

    \hspace{0.5in} $\xymatrix@R=16pt@C=20pt@M=7pt{X \ar[r]
                                      & Y \ar[r] & Z \ar[r] & X[1]  \\
          X' \ar[r] \ar[u]^u  & Y' \ar[r] \ar[u]^v  & Z' \ar[r]
                    \ar@{..>}[u]^{\exists} & X'[1] \ar[u]_{u[1]}  }$

  \noindent  if the horizontal rows are exact, and the left square commutes,
  then the dotted arrow can be  filled  (not necessarily uniquely) to make
  the diagram commute.

        \vspace{0.1in}

     \item[$\mathbf{TR4}$] ({\em short imprecise version}). Given $f \: X
     \to Y$ and $g \: Y \to Z$, we have the following commutative
     diagram in which every pair of same color arrows is part of an exact triangle
   $$\xymatrix@R=4pt@C=6pt@M=6pt{& & & & & X \thickar{}{ld}{_{f}} {\ar[ld]}
                                                       \dentar{}{dd}{^gf} & & \\
    & & & &         Y \thickar{}{llllddd}{}  \ar[rd]_g  & &  & \\
    & & & & & Z  \dentar{}{dd}{} \ar[rrdd]  && \\
    & & & & & & &   \\
    U \ar@{..>} [rrrrr] & & & & & V \ar@{..>} [rr] & & W                 }$$
Idea: ``$(Z/X)/(Y/X)=Z/Y)$''.
  See Appendix II for a more precise statement.
  \end{itemize}

\vspace{0.1in}

\noindent A {\bf triangulated category} is  an additive category
$\mathcal{T}$ equipped with an auto-equivalence $X\mapsto X[1]$  and
a triangulation.

\vspace{0.1in}

\noindent   A {\bf triangle functor} (or a {\bf exact
functor})\footnote{This is not a very good terminology, because
there is also a notion of {\em $t$-exact} functor with respect to a
$t$-structure.} is a functor
 between triangulated categories which commutes with
 the translation functors (up to natural transformation)
 and sends  exact triangles to exact triangles.

\begin{rem}
  A triangulated category can be thought of as a category in which there
  are well-behaved notions of {\em homotopy kernel} and {\em homotopy
  cokernel}\footnote{Indeed not quite well-behaved (read, functorial),
  essentially due to the fact that the morphism whose existence is required
  in $\mathbf{TR3}$ may not be unique. This turns out to be problematic.
   A way to remedy this is to work with
  DG categories; see \oldcite{BoKa}.} (hence also various other types of homotopy
  limits and colimits).
\end{rem}

\separate

\noindent {\bf Some immediate consequences.} Let $\mathcal{T}$ be a
triangulated category, and let $X \to Y \to Z \to X[1]$  be an exact
triangle in $\mathcal{T}$. Then, the following are true:

  \vspace{0.1in}

\begin{itemize}
    \item[$\bf{0.}$] The opposite category $\T^{op}$ is naturally triangulated.
    \vspace{0.1in}

    \item[$\bf{1.}$] Any length three portion of the sequence
     {\footnotesize
       $$\cdots \llra{} Z[-1] \llra{-h[-1]} X \llra{f} Y \llra{g} Z
        \llra{h} X[1] \llra{-f[1]} Y[1] \llra{-g[1]} Z[1] \llra{-h[1]} \cdots
        $$}
       is an exact triangle.
     \vspace{0.1in}

     \item[$\bf{2.}$] In the above exact sequence, any two
     consecutive morphisms compose to zero.
     {\em Proof.} Enough to check $g\circ f=0$. Apply
     $\mathbf{TR1.}a$ and $\mathbf{TR3}$ to

        \hspace{0.5in} $\xymatrix@R=16pt@C=20pt@M=7pt{X \ar[r]^f
                                      & Y \ar[r]^g & Z \ar[r]^h & X[1]  \\
          X \ar[r]^{\id} \ar[u]^{\id}  & X \ar[r] \ar[u]^f  & 0 \ar[r]
                    \ar@{..>}[u]^{\exists} & X[1] \ar[u]_{\id}  }$

    \vspace{0.1in}

     \item[$\bf{3.}$] Let $T$ be any object. Then,
        $$\Hom(T,X) \to \Hom(T,Y) \to \Hom(T,Z)$$
        is exact. Therefore, the long sequence of ($\mathbf{1}$)
        gives rise to a long exact sequence of abelian groups. The same
        thing is true for
          $$\Hom(X,T) \to \Hom(Y,T) \to \Hom(Z,T).$$
\end{itemize}

\vspace{0.1in}

{\small
\begin{rem}
   In fact, $\mathbf{TR3}$ and half of $\mathbf{TR2}$ follow from
   the rest of the axioms. For this, see \cite{May2}.
   Also see \cite{Neeman} for another formulation of $\mathbf{TR4}$;
   especially,  Remark 1.3.15 and Proposition 1.4.6.
\end{rem}
}

\separate

\noindent {\bf The main example.}
 The  proof of the following theorem can be found in
 \cite{GeMa} or \cite{KaSch}. (We have sketched some
 of the ideas in Lecture 2.)

\begin{thm}
  Let $\A$ be an abelian category. Then $\mathcal{K}(\A)$ and
  $\mathcal{D}(\A)$ are triangulated categories.
\end{thm}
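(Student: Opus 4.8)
The plan is to verify the axioms $\mathbf{TR1}$--$\mathbf{TR4}$ directly for $\mathcal{K}(\A)$, using the explicit mapping cone construction from Lecture 2, and then to deduce the triangulated structure on $\mathcal{D}(\A)$ from that on $\mathcal{K}(\A)$ by a localization argument. The shift functor is $B \mapsto B[1]$ (which is an autoequivalence, with inverse the shift the other way), and a triangle is declared \emph{exact} if it is isomorphic \emph{in $\mathcal{K}(\A)$} to a standard cofiber sequence $B \llra{f} C \llra{g} \Cone(f) \llra{\partial} B[1]$ for some chain map $f$. With this definition, $\mathbf{TR1}(c)$ is immediate, and $\mathbf{TR1}(b)$ holds by construction. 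For $\mathbf{TR1}(a)$ one notes that $\Cone(\id_X)$ is chain homotopy equivalent to $0$ (this is the split-exactness exercise: $\Cone(B)$ has an explicit contracting homotopy), so the standard triangle on $\id_X$ is isomorphic in $\mathcal{K}(\A)$ to $X \llra{\id} X \to 0 \to X[1]$.

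For $\mathbf{TR2}$ (rotation), the content is exactly item~3 of ``Basic properties of cofiber sequences'' in Lecture 2: the iterate of a cofiber sequence $B \llra{f} C \llra{g} \Cone(f)$ is again a cofiber sequence, because $g$ is a pointwise split inclusion and $\Cone(g) \sim \Cone(f)/C = B[1]$, with the explicit chain equivalence $\varphi$ identifying $\Cone(\partial) \sim C[1]$ and showing $\Cone(f) \llra{\partial} B[1] \llra{-f[1]} C[1]$ is exact. Running this forward and backward (using invertibility of the shift) gives both directions of $\mathbf{TR2}$ up to the chain equivalences already written down. For $\mathbf{TR3}$, given a commuting square $gu \simeq v f$ of chain maps (commuting on the nose after passing to $\mathcal{K}(\A)$, so $vf - gu = sd + ds$ for some homotopy $s$), I would write down the filler $\Cone(f') \to \Cone(f)$ explicitly by the formula $(b,c) \mapsto (u(b), v(c) + s(b))$ — here the homotopy $s$ enters precisely to make the square involving the cones commute — and check it is a chain map intertwining $\partial$; the verification is a routine matrix computation with the cone differentials written in the table.

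For $\mathbf{TR4}$ (the octahedral axiom) I would take $f\:X\to Y$, $g\:Y\to Z$, form $\Cone(f)$, $\Cone(g)$, $\Cone(gf)$ explicitly, and exhibit maps $\Cone(f)\to\Cone(gf)\to\Cone(g)$ together with $\Cone(g)\to\Cone(f)[1]$ by plain formulas in the summands (e.g. $\Cone(f)^n = X^{n+1}\oplus Y^n \to X^{n+1}\oplus Z^n = \Cone(gf)^n$ via $(x,y)\mapsto(x, g(y))$, and so on), then check that $\Cone(f)\to\Cone(gf)\to\Cone(g)\to\Cone(f)[1]$ is itself a standard cofiber sequence up to a chain equivalence. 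This is the step I expect to be the main obstacle: not because any single check is deep, but because one must produce the whole octahedron and verify commutativity of all faces and exactness of all four triangles, keeping the signs in the cone differentials straight throughout. Finally, for $\mathcal{D}(\A)$: since $\mathcal{D}(\A)$ is obtained from $\mathcal{K}(\A)$ by the calculus of fractions inverting quasi-isomorphisms, and the class of quasi-isomorphisms is compatible with the triangulation (a map of exact triangles with two vertices quasi-isomorphisms forces the third by the long exact cohomology sequence of Proposition~\ref{P:exact} together with the five lemma), the localization inherits a triangulated structure whose exact triangles are those isomorphic in $\mathcal{D}(\A)$ to the images of exact triangles in $\mathcal{K}(\A)$; one checks $\mathbf{TR1}$--$\mathbf{TR4}$ transport along the localization functor, with $\mathbf{TR3}$ using the description of morphisms in $\mathcal{D}(\A)$ as roofs $s^{-1}g$ with $s$ a quasi-isomorphism. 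I would cite \cite{GeMa} or \cite{KaSch} for the details of this last localization step rather than reproduce it.
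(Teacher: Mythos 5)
Your proposal is correct and follows essentially the route the paper intends: the paper defers the details to \cite{GeMa} and \cite{KaSch} but points to the Lecture~2 cone machinery (contractibility of $\Cone(\id)$, the iterated cofiber sequence for $\mathbf{TR2}$, explicit cone formulas for $\mathbf{TR3}$ and $\mathbf{TR4}$, and Verdier localization for $\mathcal{D}(\A)$), which is exactly what you use. The only point the paper adds that you omit is the remark that the same triangulation is obtained from pointwise split short exact sequences (and, for $\mathcal{D}(\A)$, from all short exact sequences), but that is a supplementary characterization rather than part of the proof.
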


 In both cases, the distinguished triangles are the ones obtained
from cofiber sequences. Equivalently, the distinguished triangles
are the ones obtained from the pointwise split short exact
sequences. (To see the equivalence, note that one implication
follows from Proposition 4.4 of Lecture 2. The other follows from
the fact that the cofiber sequence $B \to C \to \Cone(f)$ is chain
homotopy equivalent to the pointwise split short exact sequence $B
\to \Cyl(f) \to \Cone(f)$; see Remark 4.2 of Lecture 2.) In the case
of $\mathcal{D}(\A)$ we get the same triangulation if we take {\em
all} short exact sequences. This also follows from Proposition 4.4
of Lecture 2.

\separate

  The following proposition allows us to produce more triangulated
  categories from the above basic ones.

\begin{prop}
  Let $\A$ be an abelian category, and let $\mathcal{C}$ be a full
  additive subcategory of $\Ch(\A)$. Let $\mathcal{K} \subset
  \mathcal{K}(\A)$ be the corresponding quotient category and
  $\mathcal{D}$ the localization of $\mathcal{K}$ with respect to
  quasi-isomorphisms. Assume $\mathcal{C}$ is closed
  under translation, quasi-isomorphisms, and forming mapping cones.
  Then $\mathcal{K}$
  and $\mathcal{D}$ are triangulated categories and we have fully faithful
  triangle functors
     $$\mathcal{K} \to \mathcal{K}(\A)$$
     $$\mathcal{D} \to \mathcal{D}(\A).$$
\end{prop}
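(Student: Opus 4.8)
\emph{Proof strategy and the homotopy case.} The idea is that a triangulated structure is essentially forced on a full subcategory that is closed under shifts and under forming mapping cones, since the axioms $\mathbf{TR1}$--$\mathbf{TR4}$ refer only to objects, morphisms, and mapping cones; closure under quasi-isomorphisms then handles the localization step. By construction $\mathcal{K}$ has the same objects as $\mathcal{C}$ and, since $\mathcal{C}$ is \emph{full} in $\Ch(\A)$, $\Hom_{\mathcal{K}}(B,C)=\Hom_{\Ch(\A)}(B,C)/N=\Hom_{\mathcal{K}(\A)}(B,C)$; hence $\mathcal{K}\to\mathcal{K}(\A)$ is fully faithful and $\mathcal{K}$ is additive ($N$ being an additive ideal). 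A chain homotopy equivalence is a quasi-isomorphism, so closure of $\mathcal{C}$ under quasi-isomorphisms makes the essential image of $\mathcal{K}$ replete in $\mathcal{K}(\A)$; that image is stable under $[1]$ and $[-1]$ (closure under translation), so $[1]$ restricts to an auto-equivalence of $\mathcal{K}$, and it contains a mapping cone of a representing chain map of every morphism of $\mathcal{K}$ (closure under cones, with $0\in\mathcal{C}$).

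\emph{$\mathcal{K}$ is triangulated.} Declare a triangle in $\mathcal{K}$ exact iff it is isomorphic in $\mathcal{K}$ to $B\torel{f}C\to\Cone(f)\to B[1]$ with $B,C\in\mathcal{C}$; equivalently, iff its image in $\mathcal{K}(\A)$ is exact. Then $\mathbf{TR1}$ is immediate: $\Cone(\id_X)$ is contractible, hence isomorphic in $\mathcal{K}$ to $0\in\mathcal{C}$; every morphism of $\mathcal{K}$ has a cone in $\mathcal{C}$; and isomorphism-invariance is built in. For $\mathbf{TR2}$, $\mathbf{TR3}$, $\mathbf{TR4}$ I would use that in the proofs of these axioms for $\mathcal{K}(\A)$ every auxiliary object -- the rotated cones of the sequence $(\bigstar)$ on page \pageref{sequence}, the filler object of $\mathbf{TR3}$, and the octahedral vertices $\Cone(f)$, $\Cone(gf)$, $\Cone(g)$ -- is built from objects of $\mathcal{C}$ by iterated shifts and mapping cones, hence lies in $\mathcal{C}$; while every commutativity, exactness, and existence-of-a-filler assertion that has to be checked is detected in $\mathcal{K}(\A)$, where it holds, because $\mathcal{K}\to\mathcal{K}(\A)$ is fully faithful and both carries and reflects exact triangles. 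Thus $\mathcal{K}$ satisfies $\mathbf{TR1}$--$\mathbf{TR4}$ and $\mathcal{K}\to\mathcal{K}(\A)$ is a triangle functor.

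\emph{The derived case.} Every acyclic complex receives a quasi-isomorphism from $0\in\mathcal{C}$, so closure under quasi-isomorphisms forces $\mathcal{C}$ to contain all acyclic complexes, and more generally every complex quasi-isomorphic to an object of $\mathcal{C}$. The quasi-isomorphisms of $\mathcal{K}$ -- those morphisms whose cone is acyclic -- form a multiplicative system admitting a calculus of fractions: it is the restriction to $\mathcal{C}$ of the one on $\mathcal{K}(\A)$, whose Ore condition is solved by a homotopy pull-back that, being built from a finite sum and a mapping cone, stays in $\mathcal{C}$. Hence $\mathcal{D}=\mathcal{K}[\mathrm{qis}^{-1}]$ exists, it is the localization of $\mathcal{K}$ at the acyclic complexes, and the same argument that establishes that $\mathcal{D}(\A)$ is triangulated (the references cited for the theorem above) shows $\mathcal{D}$ is triangulated, with distinguished triangles the images of those of $\mathcal{K}$; the induced functor $\mathcal{D}\to\mathcal{D}(\A)$ is then a triangle functor. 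For full faithfulness, the calculus of fractions gives, for $B,C\in\mathcal{C}$, that $\Hom_{\mathcal{D}(\A)}(B,C)=\varinjlim_{t}\Hom_{\mathcal{K}(\A)}(B',C)$, the colimit taken over quasi-isomorphisms $t\:B'\to B$; each such $B'$ is quasi-isomorphic to $B\in\mathcal{C}$ and hence lies in $\mathcal{C}$, so the indexing category is exactly the one computing $\Hom_{\mathcal{D}}(B,C)$, while $\Hom_{\mathcal{K}(\A)}(B',C)=\Hom_{\mathcal{K}}(B',C)$ by the first paragraph; therefore $\Hom_{\mathcal{D}}(B,C)\risom\Hom_{\mathcal{D}(\A)}(B,C)$.

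\emph{Expected main obstacle.} The first two paragraphs are essentially bookkeeping once one accepts that the triangulated axioms are ``local'' to a full, replete, shift- and cone-closed subcategory. The genuine work is the derived case: showing that the calculus of fractions for quasi-isomorphisms really does restrict to $\mathcal{C}$ -- this is precisely where closure under mapping cones enters, to keep homotopy pull-backs inside $\mathcal{C}$ -- and noticing that closure under quasi-isomorphisms is exactly what forces the hom-set colimit computing $\Hom_{\mathcal{D}(\A)}$ to range over objects of $\mathcal{C}$. A fraction-free alternative for this last point is Verdier's criterion for the quotient by a null system contained in a full triangulated subcategory, applied to the acyclic complexes inside $\mathcal{K}\subseteq\mathcal{K}(\A)$.
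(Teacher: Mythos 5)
Your proposal is correct and follows the same route the paper indicates: a direct verification that the triangulated structure of $\mathcal{K}(\A)$ restricts to $\mathcal{K}$ (using closure under translation and cones), and then the calculus of fractions on $\mathcal{K}(\A)$ for the $\mathcal{D}\to\mathcal{D}(\A)$ case, with closure under quasi-isomorphisms guaranteeing that the roofs computing $\Hom_{\mathcal{D}(\A)}(B,C)$ all live in $\mathcal{C}$. The paper leaves both steps as one-line assertions; your write-up simply supplies the details of that same argument.
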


\begin{proof}
  The proof  in the case of $\mathcal{K}$ is straightforward.
  The  case of $\mathcal{D} \to \mathcal{D}(\A)$
  follows easily from the existence of calculus of fractions on $\mathcal{K}(\A)$.
  (Our discussion of calculus of fractions in Lecture 2 is enough for this.)
\end{proof}

\vspace{0.1in}

\noindent {\em Exercise.} Show that $\mathcal{D}^-(\A)$ can,
equivalently, be defined by inverting the quasi-isomorphisms in the
category  $\mathcal{C}$ of chain complexes with bounded above
cohomology. The similar statement is true in the case of
$\mathcal{D}^+(\A)$ and $\mathcal{D}^b(\A)$. (Hint: construct a
right inverse to the inclusion $\iota \: \Ch^-(\A) \hookrightarrow
\mathcal{C}$ by choosing an appropriate truncation of each complex
in $\C$; see page \pageref{truncate} to learn how to truncate. Show
that this becomes an actual inverse to $\iota$ when we pass to
derived categories.)

\vspace{0.1in}

\begin{cor} We have the following
fully faithful triangle functors:
    $$\mathcal{K}^-(\A), \mathcal{K}^+(\A), \mathcal{K}^b(\A)
    \  \hookrightarrow \ \mathcal{K}(\A),$$
    $$\mathcal{D}^-(\A), \mathcal{D}^+(\A), \mathcal{D}^b(\A)
    \ \hookrightarrow \ \mathcal{D}(\A).$$
\end{cor}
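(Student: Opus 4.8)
The plan is to deduce the Corollary from the preceding Proposition by exhibiting, in each case, an appropriate full additive subcategory $\mathcal{C} \subseteq \Ch(\A)$ to which that Proposition applies.

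First, for the homotopy categories. For each of the three boundedness conditions I would take $\mathcal{C} = \Ch^{\bullet}(\A)$, the full subcategory of complexes that are bounded above, bounded below, or bounded on both sides. This is manifestly a full additive subcategory of $\Ch(\A)$, and I would check the two relevant closure properties by inspecting degreewise formulas: it is closed under translation, since $B[1]^n = B^{n+1}$ merely shifts the range of nonzero terms; and it is closed under forming mapping cones, since $\Cone(f)^n = B^{n+1} \oplus C^n$, so if $B$ and $C$ vanish outside a half-line (or a finite window) then so does $\Cone(f)$. The associated quotient category $\mathcal{K}$ is then $\mathcal{K}^{\bullet}(\A)$ by definition, and the Proposition gives at once that $\mathcal{K}^{\bullet}(\A)$ is triangulated and that $\mathcal{K}^{\bullet}(\A) \to \mathcal{K}(\A)$ is a fully faithful triangle functor.

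For the derived categories one must be slightly more careful: $\Ch^{\bullet}(\A)$ is \emph{not} closed under quasi-isomorphism inside $\Ch(\A)$ (an unbounded acyclic complex is quasi-isomorphic to $0 \in \Ch^b(\A)$), so the Proposition does not apply to it directly for the $\mathcal{D}$ statement. Instead I would take $\mathcal{C}$ to be the full subcategory of $\Ch(\A)$ consisting of complexes whose \emph{cohomology} is bounded in the appropriate direction. Since each $H^n$ is a quasi-isomorphism invariant, this $\mathcal{C}$ is closed under quasi-isomorphism; it is closed under translation because $H^n(B[1]) = H^{n+1}(B)$; and it is closed under mapping cones because the long exact cohomology sequence of a cofiber sequence (established in Lecture~2) wedges $H^n(\Cone f)$ between $H^n(C)$ and $H^{n+1}(B)$, so it vanishes for $n$ outside the relevant range whenever $H^{\bullet}(B)$ and $H^{\bullet}(C)$ do. The Proposition then yields a triangulated category $\mathcal{D}$ together with a fully faithful triangle functor $\mathcal{D} \to \mathcal{D}(\A)$, and by the Exercise immediately preceding the Corollary this $\mathcal{D}$ is canonically identified with $\mathcal{D}^{\bullet}(\A)$ (localizing at quasi-isomorphisms in the category of complexes with cohomology bounded in a given direction gives the same category as localizing $\Ch^{\bullet}(\A)$). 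Chaining these two identifications finishes the proof.

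The only genuine obstacle is precisely the mismatch just flagged: the naive choice $\mathcal{C} = \Ch^{\bullet}(\A)$ fails the quasi-isomorphism-closure hypothesis needed for the derived half of the Proposition, so one is forced to route through the larger category of complexes with cohomology bounded in the appropriate direction and then invoke the Exercise to recover the standard description of $\mathcal{D}^{\bullet}(\A)$. Everything else — additivity, fullness, and closure under shift and cone — is a routine inspection of the explicit formulas for $B[1]$ and $\Cone(f)$.
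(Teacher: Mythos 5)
Your proposal is correct and follows exactly the route the paper intends: apply the preceding Proposition with $\mathcal{C}=\Ch^{\bullet}(\A)$ for the homotopy categories, and for the derived categories replace $\mathcal{C}$ by the complexes with cohomology bounded in the appropriate direction and invoke the Exercise to identify the resulting localization with $\mathcal{D}^{\bullet}(\A)$. The paper's own proof is just the one-line remark that this is immediate from the Proposition together with the Exercise, so you have simply supplied the details it leaves implicit.
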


\begin{proof}
  This is an immediate corollary of the previous proposition.
  (We also need to use the previous exercise.)
\end{proof}

\separate

\noindent{\em Important examples to keep in mind:}

\begin{itemize}
  \item[$\mathbf{1.}$] $\A=R$-$\mathbf{Mod}$.

     \vspace{0.1in}

  \item[$\mathbf{2.}$] $\A$ = sheaves of $R$-modules over a
  topological space.

     \vspace{0.1in}

  \item[$\mathbf{3.}$] $\A$ = presheaves of $R$-modules over a
  topological space.

     \vspace{0.1in}

  \item[$\mathbf{4.}$] $\A$ = sheaves of $\mathcal{O}_X$-modules on a scheme $X$.

    \vspace{0.1in}

  \item[$\mathbf{5.}$] $\A$ = quasi-coherent sheaves on  a scheme $X$.

\end{itemize}

\section{Cohomological functors}{\label{S:Cohomological}}

\noindent {\small References: \cite{Weibel}, \cite{GeMa},
\cite{BeBeDe} \cite{Hartshorne2}, \cite{KaSch}, \cite{Verdier}.}

\vspace{0.1in}

  \noindent Let $\T$ be a triangulated category and $\A$ an abelian
  category. An additive functor $H \: \T \to \A$ is called
  a {\bf cohomological functor} if for every exact triangle
   $X \to Y \to Z \to X[1]$,
   \vspace{0.1in}

\begin{center}
 \fbox{$H(X) \to H(Y) \to H(Z)$ is exact in $\A$.}
\end{center}
 \vspace{0.1in}

\noindent If we set $H^n(X):=H(X[n])$, we obtain the following long
 exact sequence:
  \begin{center}
  \fbox{ {\footnotesize
       $\cdots \llra{} H^{n-1}(Z) \to H^n(X) \to H^n(Y) \to H^n(Z)
        \to H^{n+1}(X) \to H^{n+1}(Y) \to H^{n+1}(Z) \to \cdots
        $} }
   \end{center}

\vspace{0.1in}

\begin{ex}{\label{E:cohomological}}
\end{ex}
\begin{itemize}
    \item[$\bf{1.}$] Let $\T$ be any of
     $\mathcal{K}^*(\A)$ or $\mathcal{D}^*(\A)$, $*=\emptyset,-,+,b$.
     Then the functor $H \: \T \to \A$, $X \mapsto H^0(X)$
     is a cohomological functor.

     \vspace{0.1in}

    \item[$\bf{2.}$] For any $T$, $\Hom(T,-) \: \T \to \A$
      is a cohomological functor. Similarly,
      $\Hom(-,T) \: \T \to \A^{op}$ is a cohomological functor.
\end{itemize}

\vspace{0.1in}

\noindent {\em Exercise.} Show that ($\mathbf{1}$) is a special case
of ($\mathbf{2}$).
\section{Abelian categories inside triangulated categories;
$t$-structures}{\label{S:t}}

\noindent {\small References:  \cite{GeMa}, \cite{KaSch},
\cite{BeBeDe}.}

\vspace{0.1in}

The triangulated categories associated to an abelian category $\A$
contain $\A$ as a full subcategory. More precisely,

\vspace{0.1in}
\begin{prop}
  Let $\T$ be any of
     $\mathcal{K}^*(\A)$ or $\mathcal{D}^*(\A)$, $*=\emptyset,-,+,b$.
     Then the functor $\A \to \T$ defined by
       $$A \ \ \ \ \mapsto \ \ \ \ \cdots \to 0  \to A \to 0  \to
       \cdots$$
      ($A$ is sitting in degree zero)       is fully faithful.
\end{prop}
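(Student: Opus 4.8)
The plan is to show that the functor $\iota \colon \A \to \T$ sending $A$ to the complex concentrated in degree zero is fully faithful, i.e., that for all $A, B \in \A$ the natural map
$$\Hom_{\A}(A,B) \lra \Hom_{\T}(\iota A, \iota B)$$
is a bijection. First I would dispose of the homotopy category case. Since $\iota A$ and $\iota B$ are complexes concentrated in degree zero, a chain map $\iota A \to \iota B$ is literally a single morphism $A \to B$ in $\A$, and the only possible homotopy $s^n \colon (\iota A)^n \to (\iota B)^{n-1}$ is zero for all $n$ (the source or target is $0$). Hence no nonzero chain map is null-homotopic, and by the explicit description $\Hom_{\mathcal{K}(\A)}(\iota A,\iota B) = \Hom_{\Ch(\A)}(\iota A,\iota B)/N = \Hom_{\A}(A,B)$. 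The same computation works verbatim in $\mathcal{K}^*(\A)$ for $* = -,+,b$, since degree-zero complexes are bounded. This settles the four homotopy-category cases.

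Next I would treat the derived category cases, which is where the real content lies. Using the calculus of fractions on $\mathcal{K}(\A)$ (as recalled in Lecture 2, \S5.3), every morphism $\iota A \to \iota B$ in $\mathcal{D}(\A)$ is represented by a roof $\iota A \xleftarrow{s} X \xrightarrow{g} \iota B$ with $s$ a quasi-isomorphism. The key observation is that a quasi-isomorphism $s \colon X \to \iota A$ forces $X$ to have cohomology concentrated in degree zero with $H^0(X) \cong A$. I would then use truncation functors: replacing $X$ by $\tau_{\leq 0}X$ and then by $\tau_{\geq 0}(\tau_{\leq 0}X)$ produces a complex quasi-isomorphic to $X$ that is concentrated in degree zero — namely $\iota A$ itself up to canonical isomorphism — compatibly with the roof. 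After this reduction both legs of the roof are maps between degree-zero complexes, so the roof collapses to an honest morphism $A \to B$ in $\A$, giving surjectivity of the natural map. For injectivity, a similar argument: if $f, f' \colon \iota A \to \iota B$ in $\A$ become equal in $\mathcal{D}(\A)$, then by calculus of fractions there is a quasi-isomorphism $t \colon \iota B \to Y$ with $t f = t f'$ in $\mathcal{K}(\A)$; truncating $Y$ down to its degree-zero part $\iota H^0(Y)$, the map $t$ becomes an isomorphism in $\A$, and $tf = tf'$ in $\A$ then gives $f = f'$.

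Alternatively — and this is cleaner — I would invoke the cohomology functor: by Example \ref{E:cohomological}.$\mathbf{1}$ and \S2 of Lecture 2, $H^0 \colon \mathcal{D}(\A) \to \A$ factors the identity on $\A$ in the sense that $H^0(\iota A) = A$ naturally, and more generally $H^n(\iota A) = 0$ for $n \neq 0$. This immediately gives that $\iota$ is faithful (a morphism $f$ with $H^0(f) = f$ zero is zero). For fullness one still needs the truncation argument above, or the standard fact that $\Hom_{\mathcal{D}(\A)}(\iota A, \iota B) \cong \Hom_{\A}(A,B)$ because $\iota B$ has no positive cohomology to contribute $\Ext^{<0}$ and no negative cohomology — here one uses that $\Hom_{\mathcal{D}(\A)}(\iota A, \iota B[n]) = 0$ for $n < 0$, which again follows from truncation. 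The bounded variants $\mathcal{D}^*(\A)$ then follow since, as noted in the excerpt, each is a full subcategory of $\mathcal{D}(\A)$ and contains the degree-zero complexes.

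The main obstacle is the surjectivity (fullness) statement in the derived category: one must argue that an arbitrary roof or fraction representing a morphism $\iota A \to \iota B$ can be replaced by an actual morphism in $\A$, and the honest way to do this is through the truncation functors, whose construction is deferred in the text to page \pageref{truncate}. So strictly speaking this proof depends on the truncation machinery developed later; in a lecture-note setting one would either forward-reference it or simply cite \cite{GeMa} or \cite{KaSch} for the computation $\Hom_{\mathcal{D}(\A)}(\iota A, \iota B) = \Hom_{\A}(A,B)$. Everything else — the homotopy-category cases and faithfulness — is elementary.
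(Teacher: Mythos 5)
Your argument is correct. The paper states this proposition without giving a proof, and what you write is the standard argument (it is the one found in \cite{GeMa} and \cite{KaSch}, the references cited for this section): the homotopy-category cases are immediate because a degree-zero complex admits no nonzero homotopies, faithfulness in $\mathcal{D}(\A)$ follows from $H^0\circ\iota=\id_{\A}$, and fullness is the genuine content, handled by collapsing a roof $\iota A\xleftarrow{s}X\xrightarrow{g}\iota B$ to $H^0(g)\circ H^0(s)^{-1}$ via truncation, with the bounded variants inherited from the full-subcategory remark of Lecture~2. One cosmetic point: with the paper's quotient convention for $\taudg{n}$, the double truncation $\taudg{0}\taudl{0}X$ is the two-term complex $\im d^{-1}\hookrightarrow\ker d^{0}$ rather than literally a complex concentrated in degree zero; it is cleaner to note directly that, since $s$ and $g$ are chain maps into degree-zero complexes, their degree-zero components kill $\im d^{-1}$ and hence both legs of the roof factor through the quasi-isomorphism $\taudl{0}X\to H^0(X)[0]$, which is all the reduction requires.
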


\vspace{0.1in}

Observe that this is not the only abelian subcategory of $\T$. For
example, we have $\bbZ$ many copies of $\A$ in $\T$ (simply take
shifts of the above embedding). We may also have abelian categories
in $\T$ that are not isomorphic to $\A$.

\separate

As we saw above, the triangulated structure of $\T$ is not
sufficient to recover the abelian subcategory $\A$. What extra
structure do we need on $\T$ in order to reconstruct $\A$?

\vspace{0.1in}

\noindent A {\bf $t$-structure} on a triangulated category $\T$ is a
pair $(\tul{0}, \tug{0})$ of saturated (i.e., closed under
isomorphism) full subcategories such that:

\begin{itemize}

  \item[$\mathbf{t1.}$] If $X \in \tul{0}$, $Y\in \tug{1}$, then
  $\Hom(X,Y)=0$.

  \item[$\mathbf{t2.}$] $\tul{0}\subseteq \tul{1}$, and
  $\tug{1}\subseteq\tug{0}$.

  \item[$\mathbf{t3.}$] For every $X\in\T$, there is an exact
  triangle
    $$A \to X \to B \to A[1]$$
  such that $A \in \tul{0}$ and $B\in \tug{1}$.
\end{itemize}

\vspace{0.1in} \noindent Notation: $\tul{n}=\tul{0}[-n]$ and
$\tug{n}=\tug{0}[-n]$.

\vspace{0.1in} \noindent{\bf Main example.} Let
$\T=\mathcal{D}(\A)$. Set \vspace{0.1in}
  \begin{center}
\fbox{
\begin{minipage}{2.6in}
  $$\tul{0}=\{X\in\mathcal{D}(\A) \ | \ H^i(X)=0, \ \forall i>0\},$$
  $$\tug{0}=\{X\in\mathcal{D}(\A) \ | \ H^i(X)=0, \ \forall i<0\}.$$
\end{minipage}}
\end{center}
\vspace{0.1in}

\noindent The proof that this is a $t$-structure is not hard. The
axioms ($\mathbf{t1}$) and ($\mathbf{t2}$) are straightforward. To
verify ($\mathbf{t3}$), we define {\em truncation functors}
  $$\taudl{0}\: \T \to \tul{0}$$
  $$\taudg{1}\: \T \to \tug{1}$$
by \vspace{0.1in}
  \begin{center}{\label{truncate}}
\fbox{
\begin{minipage}{4.5in}
 $$\taudl{0}(X) := \ \ \ \ \ \ \ \ \ \cdots \to X^{-2} \to X^{-1} \to
  \ker{d} \to 0 \to 0 \to \cdots \    $$
 $$\taudg{1}(X) :=X/\taudl{0}(X)= \ \  \ \  \cdots \to 0 \to 0 \to
  X_0/\ker{d} \to X^1 \to X^2 \to \cdots.$$
 \end{minipage}}
\end{center}
\vspace{0.1in}

\noindent It is easy to see that
  $$\taudl{0}X \to X \to \taudg{1}X \to \taudl{0}X[1]$$
  is exact (Lecture 2, Proposition 4.4). So, we can take
  $A=\taudl{0}X $ and $B=\taudg{1}X$.

\begin{rem}
  This $t$-structure induces $t$-structures on each of
   $\mathcal{D}^{-,+,b}(\A)$.
\end{rem}

\noindent {\em Exercise.} Show that this does not give a
$t$-structure
  on $\mathcal{K}(\A)$. (Hint: show that ($\mathbf{t1}$) fails
  by taking $X=Y$  to be
  an exact complex that is not contractible, e.g., a non-split
  short exact sequence.)

\separate

The truncation functors discussed above can indeed be defined for
any $t$-structure.

\vspace{0.1in}

\begin{prop}[\oldcite{BeBeDe}, $\S$1.3]{\label{P:truncation}}
 Let $\T$ be a triangulated category equipped with a $t$-structure
 $(\tul{0},\tug{0})$.
 \begin{itemize}
    \item[$\mathbf{i.}$] The inclusion $\tul{n}\hookrightarrow \T$
      admits a right adjoint $\taudl{n} \: \T \to \tul{n}$, and the
      inclusion $\tug{n}\hookrightarrow \T$
      admits a left adjoint $\taudg{n} \: \T \to \tug{n}$.

    \item[$\mathbf{ii.}$] For every $X \in \T$ and every $n\in \bbZ$,
     there is a unique $d$ that makes the triangle
       $$\taudl{n}X \lra X \lra \taudg{n+1}X \llra{d}
       \taudl{n}X[1]$$
     exact.
 \end{itemize}

\end{prop}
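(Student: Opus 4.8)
\emph{Proof plan.}
The plan is to reduce to $n=0$ and read everything off from axiom $\mathbf{t3}$. Since $\tul{n}=\tul{0}[-n]$, $\tug{n}=\tug{0}[-n]$ and $X\mapsto X[1]$ is an auto-equivalence, it suffices to construct a right adjoint $\taudl{0}$ to the inclusion $\tul{0}\hookrightarrow\T$ and a left adjoint $\taudg{1}$ to the inclusion $\tug{1}\hookrightarrow\T$, and then to check that the triangle supplied by $\mathbf{t3}$ is, up to canonical identification, the one appearing in $\mathbf{ii}$. So I fix $X\in\T$ and, using $\mathbf{t3}$, an exact triangle $A\llra{u}X\llra{v}B\llra{w}A[1]$ with $A\in\tul{0}$ and $B\in\tug{1}$.

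First I would prove that $u$ is universal among morphisms into $X$ from objects of $\tul{0}$, and dually that $v$ is couniversal among morphisms out of $X$ to objects of $\tug{1}$. For the first, apply the cohomological functor $\Hom(Y,-)$ (Example \ref{E:cohomological}.$\mathbf{2}$) to the triangle and look at the resulting long exact sequence: when $Y\in\tul{0}$ the two terms flanking $\Hom(Y,A)$ vanish, namely $\Hom(Y,B)=0$ by $\mathbf{t1}$ and $\Hom(Y,B[-1])=0$ because $B[-1]\in\tug{2}\subseteq\tug{1}$ (the inclusion being a shift of $\mathbf{t2}$), again by $\mathbf{t1}$; hence $\Hom(Y,A)\risom\Hom(Y,X)$. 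Dually, applying $\Hom(-,Y)$ and using $\Hom(A,Y)=0$ together with $\Hom(A[1],Y)=0$ for $Y\in\tug{1}$ — the latter because $A[1]\in\tul{-1}\subseteq\tul{0}$, again a shift of $\mathbf{t2}$ — gives $\Hom(B,Y)\risom\Hom(X,Y)$. Choosing such a triangle for each $X$, the assignments $X\mapsto A$ and $X\mapsto B$ become functors by the usual argument (a morphism $X\to X'$ induces a unique morphism on the $A$'s, resp. on the $B$'s, by the universal property, and uniqueness forces functoriality), and then $u$ is the counit of $(\tul{0}\hookrightarrow\T)\dashv\taudl{0}$ while $v$ is the unit of $\taudg{1}\dashv(\tug{1}\hookrightarrow\T)$. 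This is part $\mathbf{i}$.

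For part $\mathbf{ii}$, existence is immediate: in the triangle above, $A$ represents the functor $Y\mapsto\Hom(Y,X)$ on $\tul{0}$ and $B$ the functor $Y\mapsto\Hom(X,Y)$ on $\tug{1}$, so they are canonically identified with $\taudl{0}X$ and $\taudg{1}X$ and $u,v$ with the (co)unit, whence the triangle reads $\taudl{0}X\to X\to\taudg{1}X\llra{d}\taudl{0}X[1]$. For uniqueness, suppose $A\llra{u}X\llra{v}B\llra{d}A[1]$ and $A\llra{u}X\llra{v}B\llra{d'}A[1]$ are both exact with the same $u$ and $v$ (write $A=\taudl{0}X$, $B=\taudg{1}X$). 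Applying $\mathbf{TR3}$ to these two triangles with the identity on the first two terms produces $\theta\colon B\to B$ with $(\theta-\id)\circ v=0$ and $d=d'\circ\theta$. Now $\Hom(A[1],B)=0$ by $\mathbf{t1}$, since $A[1]\in\tul{0}$ and $B\in\tug{1}$; feeding this into the cohomological long exact sequence for $\Hom(-,B)$ applied to the triangle shows that $v^*\colon\Hom(B,B)\to\Hom(X,B)$ is injective, so $(\theta-\id)\circ v=0$ forces $\theta=\id$ and therefore $d=d'$.

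I expect the uniqueness of $d$ to be the only non-formal step. In a triangulated category the third morphism of a distinguished triangle is not determined by the first two, so this genuinely needs $t$-structure input, here the vanishing $\Hom(A[1],B)=0$ routed through $\mathbf{TR3}$; the delicate part is just the bookkeeping, i.e. checking that $\mathbf{TR3}$ applied to the identity comparison of the two triangles yields an endomorphism of $B$ that $(\theta-\id)\circ v=0$ pins down to the identity. Everything else is a shift reduction, a single cohomological long exact sequence, or routine adjoint-functor formalism.
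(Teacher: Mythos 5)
Your proof is correct: the identification of $A$ and $B$ as the adjoints via the two vanishing $\Hom$'s in the long exact sequence, and the uniqueness of $d$ via $\mathbf{TR3}$ plus the injectivity of $v^*\colon\Hom(B,B)\to\Hom(X,B)$ coming from $\Hom(A[1],B)=0$, is exactly the standard argument of \cite{BeBeDe}, \S 1.3, to which the paper defers without giving its own proof. The only cosmetic slip is calling $\Hom(Y,B[-1])$ and $\Hom(Y,B)$ the terms ``flanking $\Hom(Y,A)$'' --- they flank the map $\Hom(Y,A)\to\Hom(Y,X)$ --- but the mathematics is right.
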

{\small
\begin{rem}
  An {\em aisle} in a triangulated category is a full saturated subcategory $\U$
  that is closed under extensions, $\U[1]\subseteq \U$, and such that
  $\U \to \T$ admits a right adjoint. For example, $\U=\tul{0}$ is an aisle.
  Conversely, any aisle $\U$ is equal to $\tul{0}$ for a unique $t$-structure
  $(\tul{0},\tug{0})$; see \cite{KeVo}.
\end{rem}
}

\begin{prop}[\oldcite{BeBeDe}, $\S$1.3]
Let $a\leq b$. Then $\taudg{a}$ and $\taudl{b}$ commute, in the
sense that
  $$\xymatrix@C=4pt@R=0pt@M=8pt{
    \taudl{b}X  \ar[dd] \ar[r] & X \ar [r] &  \taudg{a}X \ar[dd] \\
    && \\
       \taudg{a}\taudl{b}X \ar @{..>}^{\exists !}_{\varphi} [rr]&&
                                             \taudl{b}\taudg{a}X  }$$
  is commutative. Furthermore, $\varphi$ is necessarily an isomorphism.
\end{prop}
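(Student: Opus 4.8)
The plan is to manufacture a single object $W$ lying in the ``window'' $\tug{a}\cap\tul{b}$ which is canonically isomorphic to \emph{both} $\taudg{a}\taudl{b}X$ and $\taudl{b}\taudg{a}X$, and then to take $\varphi$ to be the composite of these two isomorphisms. Two standing facts about $t$-structures will be used throughout. First, each of $\tul{b}$ and $\tug{a}$ is closed under extensions: if two of the three vertices of an exact triangle lie in one of these subcategories, so does the third (a standard consequence of $\mathbf{t1}$--$\mathbf{t2}$ and Proposition \ref{P:truncation}, using that an aisle and its right orthogonal are closed under retracts). Second, truncation triangles are unique: whenever $A' \to Y \to B' \to A'[1]$ is exact with $A' \in \tul{n}$ and $B' \in \tug{n+1}$, there is a unique isomorphism of it with the triangle $\taudl{n}Y \to Y \to \taudg{n+1}Y \to \taudl{n}Y[1]$, obtained by lifting $\id_Y$ along the rows via $\mathbf{TR3}$ and invoking $\Hom(\tul{n},\tug{n+1})=0$ to see the lift is an isomorphism and is unique.

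The construction itself goes through the octahedral axiom. Since $a \le b$, we have $\taudl{a-1}X \in \tul{a-1} \subseteq \tul{b}$, so the canonical map $\taudl{a-1}X \to X$ factors uniquely through the canonical map $\taudl{b}X \to X$; this gives a composable pair $\taudl{a-1}X \to \taudl{b}X \to X$ whose composite is the canonical map $\taudl{a-1}X \to X$. I would apply $\mathbf{TR4}$ to this pair and set $W := \Cone(\taudl{a-1}X \to \taudl{b}X)$. The truncation triangles (plus uniqueness) identify the cone of $\taudl{b}X \to X$ with $\taudg{b+1}X$ and the cone of the composite with $\taudg{a}X$, so $\mathbf{TR4}$ supplies an exact triangle
  $$W \lra \taudg{a}X \lra \taudg{b+1}X \lra W[1].$$
Next I would check $W \in \tul{b} \cap \tug{a}$: rotating $\taudl{a-1}X \to \taudl{b}X \to W \to \taudl{a-1}X[1]$ exhibits $W$ as an extension of $\taudl{a-1}X[1] \in \tul{a-2} \subseteq \tul{b}$ by $\taudl{b}X \in \tul{b}$, so $W \in \tul{b}$; rotating the displayed triangle exhibits $W$ as an extension of $\taudg{a}X \in \tug{a}$ by $\taudg{b+1}X[-1] \in \tug{b+2} \subseteq \tug{a}$, so $W \in \tug{a}$.

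Then come the two identifications. In the triangle $\taudl{a-1}X \to \taudl{b}X \to W \to \taudl{a-1}X[1]$ the outer terms lie in $\tul{a-1}$ and $\tug{a}$, so uniqueness of truncation triangles applied to the object $\taudl{b}X$ at level $a-1$ gives a canonical isomorphism $W \risom \taudg{a}\taudl{b}X$. In the triangle $W \to \taudg{a}X \to \taudg{b+1}X \to W[1]$ the outer terms lie (after rotation) in $\tul{b}$ and $\tug{b+1}$, so uniqueness applied to $\taudg{a}X$ at level $b$ gives a canonical isomorphism $W \risom \taudl{b}\taudg{a}X$. Define $\varphi$ to be the composite $\taudg{a}\taudl{b}X \risom W \risom \taudl{b}\taudg{a}X$, which is an isomorphism by construction. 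That $\varphi$ makes the square commute is then a matter of unwinding the two identifications together with the commutativities recorded in the octahedral diagram (in particular, $\taudl{b}X \to W \to \taudg{a}X$ equals $\taudl{b}X \to X \to \taudg{a}X$). Uniqueness of $\varphi$ with this property, and hence naturality of the resulting natural isomorphism, is formal: the canonical map $\taudl{b}X \to \taudg{a}\taudl{b}X$ is the unit of the adjunction between $\taudg{a}$ and the inclusion, so since $\taudl{b}\taudg{a}X \in \tug{a}$, precomposition with it is a bijection $\Hom(\taudg{a}\taudl{b}X,\taudl{b}\taudg{a}X) \risom \Hom(\taudl{b}X,\taudl{b}\taudg{a}X)$, and commutativity of the square prescribes the image of $\varphi$ under this bijection.

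The main obstacle I anticipate is purely organizational: getting $\mathbf{TR4}$ set up against the correct composable pair and matching the three cones it produces with the right one-sided truncations, keeping the levels $a-1,\,a,\,b,\,b+1$ straight. Once the exact triangle $W \to \taudg{a}X \to \taudg{b+1}X \to W[1]$ is in hand and $W$ has been placed in the window, closure under extensions and uniqueness of truncation triangles do the remaining work, and the commutativity, uniqueness, and naturality of $\varphi$ are formal.
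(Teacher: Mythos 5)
The paper itself offers no proof of this proposition---it is quoted from \cite{BeBeDe}, \S 1.3---so there is no internal argument to compare against. Your proof is the standard octahedral one and is essentially correct: applying $\mathbf{TR4}$ to the composable pair $\taudl{a-1}X \to \taudl{b}X \to X$, identifying the cones of the second map and of the composite with $\taudg{b+1}X$ and $\taudg{a}X$ via uniqueness of truncation triangles, placing $W=\Cone(\taudl{a-1}X\to\taudl{b}X)$ in $\tul{b}\cap\tug{a}$ by extension-closure, and then reading off the two canonical identifications $W\risom \taudg{a}\taudl{b}X$ and $W\risom\taudl{b}\taudg{a}X$ is exactly how this is done in the cited source. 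The degree bookkeeping is right ($\taudl{a-1}X[1]\in\tul{a-2}\subseteq\tul{b}$ and $\taudg{b+1}X[-1]\in\tug{b+2}\subseteq\tug{a}$, both using $a\leq b$ and $\mathbf{t2}$), and the two ``standing facts'' you isolate (extension-closure of $\tul{n}$ and $\tug{n}$, uniqueness of truncation triangles) are indeed the right lemmas and do follow from $\mathbf{t1}$--$\mathbf{t3}$ together with Proposition \ref{P:truncation}.

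The one step that needs a patch is the final uniqueness claim. Commutativity of the square prescribes the composite $\iota\circ\varphi\circ\eta \in \Hom(\taudl{b}X,\taudg{a}X)$, where $\eta\:\taudl{b}X\to\taudg{a}\taudl{b}X$ is the unit and $\iota\:\taudl{b}\taudg{a}X\to\taudg{a}X$ is the counit. The bijection you invoke (precomposition with $\eta$) shows only that $\varphi$ is determined by $\varphi\circ\eta$; to recover $\varphi\circ\eta$ itself from the square you also need postcomposition with $\iota$ to be injective on $\Hom(\taudl{b}X,-)$. This is true---by the adjunction of Proposition \ref{P:truncation}, $\iota_*\:\Hom(\taudl{b}X,\taudl{b}\taudg{a}X)\to\Hom(\taudl{b}X,\taudg{a}X)$ is bijective because $\taudl{b}X\in\tul{b}$---but it must be said; the two adjunctions together make $\psi\mapsto\iota\circ\psi\circ\eta$ a bijection onto $\Hom(\taudl{b}X,\taudg{a}X)$, which gives both existence-uniqueness of $\varphi$ from the square and, formally, naturality in $X$. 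With that one line added, the proof is complete.
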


\noindent We set $\tau_{[a,b]}:=\taudg{a}\taudl{b}X$. We call
$\tul{0}\cap \tug{0}$ the {\bf heart} (or {\bf core}) of the
$t$-structure.

\begin{thm}[\oldcite{BeBeDe}, $\S$1.3]{\label{T:heart}}
   The heart $\C:=\tul{0}\cap \tug{0}$ is an abelian category.
   The functor $H^0:=\tau_{[0,0]} \: \T \to \C$ is a cohomological
   functor.
\end{thm}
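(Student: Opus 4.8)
The plan is to verify the Grothendieck-style axioms for an abelian category on $\C = \tul{0}\cap\tug{0}$ directly, using the truncation functors and the long exact sequence machinery that are already available. First I would record the easy structural facts: $\C$ is an additive category (it is a full subcategory of $\T$ closed under finite direct sums, since $\tul{0}$ and $\tug{0}$ are each closed under the shift-compatible operations, and $0\in\C$), and a morphism $f\colon X\to Y$ in $\C$, completed to an exact triangle $X\torel{f} Y\to Z\to X[1]$ in $\T$, has its ``cone'' $Z$ sitting in degrees $[-1,0]$ in the sense that $\taudl{-2}Z = 0$ and $\taudg{1}Z=0$; this is read off from the cohomological long exact sequence applied to $H^0 = \tau_{[0,0]}$ (Theorem~\ref{T:heart} uses this functor, whose cohomological property follows from the axioms and Proposition~\ref{P:truncation}). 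The candidate kernel of $f$ is then $\ker(f):=\taudl{0}Z[-1]$ and the candidate cokernel is $\coker(f):=\taudg{0}Z$; one builds the maps $\ker(f)\to X$ and $Y\to\coker(f)$ by composing the triangle maps $Z[-1]\to X$ and $Y\to Z$ with the truncation adjunction maps of Proposition~\ref{P:truncation}(i).

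Next I would check that these objects genuinely satisfy the universal properties of kernel and cokernel \emph{inside} $\C$. For the cokernel: given $h\colon Y\to W$ with $W\in\C$ and $hf=0$, use the cohomological functor $\Hom(-,W)$ (Example~\ref{E:cohomological}.2) on the triangle to factor $h$ through $Z$, then use that $W\in\tul{0}\cap\tug{0}$ together with axiom $\mathbf{t1}$ to push the factorization down through $Z\to\taudg{0}Z$; uniqueness of the factoring map comes from the vanishing $\Hom(\taudl{-1}Z, W)=0$, again by $\mathbf{t1}$. The kernel case is dual, using $\Hom(T,-)$ and the octahedral-type rotation from $\mathbf{TR2}$. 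The remaining axiom --- every monomorphism is a kernel and every epimorphism is a cokernel --- is where the real work lies. Here I would take a monic $f\colon X\to Y$ in $\C$, form its triangle, and show that monicity forces $\taudl{0}Z[-1]=\ker(f)=0$, hence $Z\in\tug{1}$, so $Z=\taudg{1}Z$; then one shows $f$ is the kernel of $Y\to Z\to\taudg{1}Z$, i.e.\ of $Y\to Z$ truncated appropriately. Checking the factorization through the image (equivalently, that $\coker(\ker f\to X)\risom\ker(Y\to\coker f)$) requires an octahedron: apply $\mathbf{TR4}$ to the composite $Z[-1]\to X\to Y$ (or to $\taudl{0}Z[-1]\to Z[-1]\to X$) and then truncate every object in the resulting diagram into $\C$, tracking that truncation carries exact triangles of objects living in a fixed amplitude window to exact triangles.

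The main obstacle I anticipate is exactly this last point: verifying the image factorization and the ``mono = kernel, epi = cokernel'' axiom forces one to commute truncation functors past the octahedral diagram of $\mathbf{TR4}$, and truncation is only \emph{partially} exact --- it preserves a triangle $A\to X\to B$ only when the outer terms have compatible $t$-amplitudes, which one must arrange by hand via the commutation $\taudg{a}\taudl{b}\risom\taudl{b}\taudg{a}$. Organizing the bookkeeping so that all the relevant cones land in the window $[-1,1]$ (so that a single application of $\tau_{[0,0]}$, or of $H^{-1},H^0,H^1$, detects everything) is the delicate step. A cleaner packaging, which I would adopt if the direct approach gets unwieldy, is to first prove that $H^0\colon\T\to\C$ is cohomological and that $\Hom_\C(X,Y)\cong\Hom_\T(X,Y)$ for $X,Y\in\C$, then define kernels/cokernels as above and deduce exactness of the resulting five-term sequences purely from the long exact $\Hom$-sequences of $\mathbf{3}$ in the ``immediate consequences'' list, reducing every abelian-category axiom to a statement about long exact sequences of abelian groups. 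Either way, the octahedral axiom is doing the essential work, and I would flag its use explicitly rather than hide it.
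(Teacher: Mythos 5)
The paper gives no proof of Theorem~\ref{T:heart} --- it is quoted from \cite{BeBeDe}, \S 1.3 --- so there is nothing in the text to compare your argument against. Your sketch is, however, essentially the standard BBD proof: additivity of $\C$; the observation that the cone $Z$ of $f\colon X\to Y$ has $t$-amplitude in $[-1,0]$; the definitions $\ker f=\taudl{0}(Z[-1])$ and $\coker f=\taudg{0}Z$, with their universal properties extracted from axiom $\mathbf{t1}$ and the long exact $\Hom$-sequences of a triangle; and the octahedral axiom carrying the real weight in the coimage--image comparison. You are right to single out that last step as the delicate one.

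Two points need repair before this becomes a proof. First, in the monomorphism case you assert that $\ker f=0$ forces $Z\in\tug{1}$. It forces $Z\in\tug{0}$, hence $Z\in\C$ and $Z\cong\coker f$, and $f$ is then the kernel of $Y\to Z$ itself. As written, $Z\in\tug{1}$ together with $Z\in\tul{0}$ would give $\Hom(Z,Z)=0$ by $\mathbf{t1}$, i.e.\ $Z=0$ and $f$ an isomorphism, which is clearly not intended. Second, you derive the amplitude bound on $Z$ from ``the cohomological long exact sequence applied to $H^0$,'' but the cohomological property of $H^0$ is the second assertion of the theorem and, in \cite{BeBeDe}, is established only after the abelian structure on $\C$. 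To avoid circularity, prove directly that $\tul{0}$ and $\tug{0}$ are closed under extensions: if $A\to E\to B\to A[1]$ is exact with $A,B\in\tul{0}$, then $\Hom(E,W)=0$ for every $W\in\tug{1}$; taking $W=\taudg{1}E$ in the triangle $\taudl{0}E\to E\to\taudg{1}E\to\taudl{0}E[1]$ shows that $\id_{\taudg{1}E}$ factors through $\taudl{0}E[1]\in\tul{-1}$, and $\mathbf{t1}$ forces that factorization, hence $\taudg{1}E$, to vanish. Rotating the triangle of $f$ then places $Z$ in $\tul{0}\cap\tug{-1}$ with no appeal to $H^0$. With these two adjustments your outline is a faithful account of the argument in \cite{BeBeDe}.
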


\begin{ex}
  The heart of the standard $t$-structure on $\mathcal{D}(\A)$
  is $\A$, and the functor $\tau_{[0,0]}$ is the usual $H^0$.
\end{ex}

\begin{rem}
  In the above theorem, $\T$ may not be equivalent to any
  of $\mathcal{D}^{\emptyset,-,+,b}(\C)$.
\end{rem}

 The moral of the story is that, by varying $t$-structures
 inside a triangulated category (e.g., $\mathcal{D}(\A)$), we can
 produce new abelian categories.

\section{Producing new abelian categories}{\label{S:Tilting}}

\noindent {\small Reference: \cite{HaReSm}.}

\vspace{0.1in}

One can use  $t$-structures to produce new abelian categories out of
given ones. This technique appeared for the first time in
\cite{BeBeDe} where they alter the $t$-structures on various derived
categories of sheaves on a space to produce new abelian categories
of {\em perverse sheaves} (see {\em loc.~cit.} $\S$2 and also
Theorem 1.4.10).

Another way to produce $t$-structures is via {\em torsion theories}.
For an application of this in noncommutative geometry see \cite{Po}.

\separate

\noindent A {\bf torsion theory} on an abelian category $\A$ is a
pair $(\bbT,\bbF)$ of full additive subcategories of $\A$ such that:

\vspace{0.1in}

\begin{itemize}
  \item  For every $T \in \bbT$ and $F\in \bbF$, we have
  $\Hom(T,F)=0$.
\vspace{0.1in}
  \item For every $A \in \A$, there is a (necessarily unique) exact
  sequence
    $$0 \to T \to A \to F \to 0, \ \ \ \ T\in \bbT,\ F\in\bbF.$$
\end{itemize}

\vspace{0.1in}
\begin{ex}
  Let $\A=\mathbf{Ab}$ be abelian categories. Take $\bbT$ to be the
  torsion abelian groups and $\bbF$ the torsion-free abelian groups.
\end{ex}

\noindent {\em Exercise.} Prove that for a torsion theory
$(\bbT,\bbF)$ we have $\bbT^{\bot}=\bbF$ and $^{\bot}\bbF=\bbT$,
that is,
   $$\bbF=\{F\in \A \ | \ \forall T\in \bbT, \ \Hom(T,F)=0\},$$
   $$\bbT=\{T\in \A \ | \ \forall F\in \bbF, \ \Hom(T,F)=0\}.$$

\separate

Out of a given torsion theory $(\bbT,\bbF)$ on an abelian category
$\A$ we can construct a new abelian category $\B$ in which the roles
of $\bbT$ and $\bbF$ are interchanged.

\vspace{0.1in}

 We construct $\B$ as follows.  Consider the following
$t$-structure on  $\mathcal{D}=\mathcal{D}(\A)$:
$$\begin{array}{rcl}
   \bbD^{\leq 0} & = & \{ B \in \mathcal{D}^{\leq 0} \ | \ H^0(B) \in
     \bbT\}, \\
   \bbD^{\geq 0} & = & \{ B \in \mathcal{D}^{\geq -1} \ | \ H^{-1}(B) \in
    \bbF\}.
\end{array}$$
\noindent This is easily seen to be a $t$-structure, so, by Theorem
\ref{T:heart}, $\B:=\bbD^{\leq 0}\cap \bbD^{\geq 0}$ is an abelian
category. More precisely,  $\B$ is equivalent to the full
subcategory of $\D(\A)$ consisting of  complexes $d \: A^{-1}\to
A^0$ such that $\ker d \in \bbF$ and $\coker d \in \bbT$.

\begin{rem} Let $\bbT'$ and $\bbF'$ be essential images of
$\bbF[1]$ and $\bbT$ in $\B$. Then $(\bbT',\bbF')$ is a torsion
theory for $\B$. (However, if we repeat the same process as above
for this torsion theory, we may not get back $\A$.)
\end{rem}

\pagebreak

\section{Appendix I: topological triangulated categories}

\noindent {\small References: (\cite{Ho}, Section 7),
\cite{Margolis}, (\cite{Weibel}, Section 10.9).}

\vspace{0.1in}

In our examples, we considered only triangulated categories that
come from algebra (e.g, derived categories). There are certain
triangulated categories that arise from topology (e.g, stable
homotopy categories) that we have not discussed in these notes but
which are worthy of attention. In line with our
topological-vs-chain-complex comparison picture of Lecture 2,
 we say a few words about the homotopy category of spectra.

\separate

\noindent {\bf Spectra.} Recall from Lecture 2 that the cone
construction (which is the main input in the construction of derived
categories) was motivated by a topological construction on spaces.
One may wonder then whether one could repeat the construction of the
derived categories in the topological setting. The problem that
arises here is that {\em the suspension functor $X \mapsto SX$ is
not an auto-equivalence} of the category of topological spaces
(while its chain complex counterpart, the shift functor $C \mapsto
C[1]$, is). To remedy this, one can formally ``invert'' the
suspension functor. This gives rise to the  category of
Spanier-Whitehead spectra (\cite{Margolis}, Chapter 1) from which we
can construct a triangulated category by inverting weak
equivalences, in the same way we obtained the derived category by
inverting quasi-isomorphisms.


 What made topologists unhappy about the category of Spanier-Whitehead spectra
 is that it is not closed under arbitrary colimits.
 \footnote{Actually neither are the {\em bounded} derived categories!}
 Nowadays there are better
 categories of spectra that do not have this deficiency. They go under
 various names
 such as {\em spectra}, {\em $\Omega$-spectra}, {\em symmetric spectra},
 etc. All these
 categories come equipped with a notion of weak equivalence, and inverting
 the weak
 equivalences in any of these categories gives rise to the {\em same}
 triangulated category,
 the {\em homotopy category of spectra}. Understanding the homotopy category
 of spectra
 is an area of active research that is called {\em stable homotopy theory}.
 To see how
 triangulated categories emerge in this context
 consult the given references.

\section{Appendix II: different illustrations of $\mathbf{TR4}$}{\label{S:TR4}}

One way of illustrating  $\mathbf{TR4}$ is via the following
``braid'' diagram:
$$\xymatrix@R=12pt@C=12pt@M=5pt{
     && X  \thickar{}{dr}{_f}  \dentar{@/^1pc/}{rr}{^{gf}}
         && Z \dentar{}{rd}{} \ar@/^1pc/[rr]  &&
            W \ar[rd]  \ar@/^1pc/@{..>}[rr]&& U[1] && \\
     & \dots && Y \ar[ru]_g   \thickar{}{rd}{}
         && V \ar@{..>} [ru] \dentar{}{rd}{}  && Y[1]  \thickar{}{ru}{}
                                   \ar[rd] && \dots & \\
     && W[-1] \ar[ru]    \ar@/_1pc/@{..>}[rr]
         && U \ar@{..>}[ru]   \thickar{@/_1pc/}{rr}{}
             &&  X[1] \thickar{}{ru}{}  \dentar{@/_1pc/}{rr}{}  && Z[1] && }$$

The axiom $\mathbf{TR4}$ can be stated as saying that, given the
three solid strands of braids, the dotted strand can be filled so
that   the diagram commutes. It is understood that each of the four
long sequences is exact (i.e., obtained from a distinguished
triangle). Note that we are requiring the existence of only three
consecutive dotted arrows; the rest of the sequence is uniquely
determined by these three.

Another depiction of the axiom $\mathbf{TR4}$ is via the following
octahedron:
$$\xymatrix@R=6pt@C=8pt@M=5pt{  & & & Z \dentar{}{rdd}{} \ar[dddlll]  & & & \\
    & & & & & & \\
    & & & & V \dentar{}{rrd}{_{+1}} \ar@{..>}[lllld] |!{[ddll];[uul]}\hole& & \\
     W \ar[rrd]^{+1}  \ar@{..>}[dddrrr]_{+1} & & & & & &
                       X \thickar{}{lllld}{^(0.4)f}  \dentar{}{llluuu}{_{gf}}\\
    & &                  Y \thickar{}{ddr}{}  \ar[uuuur]_(0.3)g  & & & &     \\
    & & & & & & \\
    & & & U \thickar{}{uuurrr}{_{+1}}  \ar@{..>}[uuuur] |!{[uuurrr];[uul]}\hole
                                                                         & & & }$$

The axiom requires that the dotted arrows could be filled. It is
understood that the four mixed-color triangles are commutative, and
the four unicolor triangles are exact.

\providecommand{\bysame}{\leavevmode\hbox
to3em{\hrulefill}\thinspace}
\providecommand{\MR}{\relax\ifhmode\unskip\space\fi MR }
\providecommand{\MRhref}[2]{%
  \href{http://www.ams.org/mathscinet-getitem?mr=#1}{#2}
} \providecommand{\href}[2]{#2}

\end{document}